\newtheorem{theorem}{Theorem}
\newtheorem{proposition}[theorem]{Proposition}
\newtheorem{lemma}[theorem]{Lemma}
\newtheorem{observation}[theorem]{Observation}
\newtheorem{definition}[theorem]{Definition}
\newtheorem*{question}{Question}
\newtheorem{conjecture}[theorem]{Conjecture}
\newcommand{\smallqed}{\hfill{\tiny $\Box$}}
\newcommand{\F}{\mathcal{C}_5^+}
\DeclareMathOperator{\free}{free}
\newenvironment{claim}{\mbox{}\\ \noindent \it}{\\[1ex]}
\newenvironment{claim_proof}{\noindent}{\smallqed\\}
\tikzstyle{jolinoeud}=[circle,inner sep=2, minimum size =3 pt, line width = 1pt, draw=black, fill=white]
\begin{document}

\title{Strengthening the Murty-Simon conjecture\\on diameter $2$ critical graphs}
\author{Antoine Dailly\footnote{\noindent Univ Lyon, Universit\'e Claude Bernard, CNRS, LIRIS - UMR 5205, F69622, France.} \footnote{Univ. Grenoble Alpes, CNRS, Grenoble INP, G-SCOP, Grenoble, France. E-mail: antoine.dailly@grenoble-inp.fr}
\and Florent Foucaud\footnote{\noindent LIMOS - CNRS UMR 6158, Universit\'e Clermont Auvergne, Aubi\`ere, France. E-mail: florent.foucaud@gmail.com}
\and Adriana Hansberg\footnote{\noindent Instituto de Matem\'aticas, UNAM Juriquilla, 76230 Quer\'etaro, Mexico. E-mail: ahansberg@im.unam.mx}  \footnote{\noindent Partially supported by PAPIIT  IA103915, PAPIIT IA103217, and CONACyT project 219775.}
\and 
}

\maketitle

\begin{abstract}
  A graph is \emph{diameter-$2$-critical} if its diameter is~$2$ but the removal of any edge increases the diameter. A well-studied conjecture, known as the Murty-Simon conjecture, states that any diameter-$2$-critical graph of order $n$ has at most $\lfloor n^2/4\rfloor$ edges, with equality if and only if $G$ is a balanced complete bipartite graph. Many partial results about this conjecture have been obtained, in particular it is known to hold for all sufficiently large graphs, for all triangle-free graphs, and for all graphs with a dominating edge. In this paper, we discuss ways in which this conjecture can be strengthened. Extending previous conjectures in this direction, we conjecture that, when we exclude the class of complete bipartite graphs and one particular graph, the maximum number of edges of a diameter-$2$-critical graph is at most $\lfloor(n-1)^2/4\rfloor+1$. The family of extremal examples is conjectured to consist of certain twin-expansions of the $5$-cycle (with the exception of a set of thirteen special small graphs). Our main result is a step towards our conjecture: we show that the Murty-Simon bound is not tight for non-bipartite diameter-$2$-critical graphs that have a dominating edge, as they have at most $\lfloor n^2/4\rfloor-2$ edges. Along the way, we give a shorter proof of the Murty-Simon conjecture for this class of graphs, and stronger bounds for more specific cases. We also characterize diameter-$2$-critical graphs of order $n$ with maximum degree $n-2$: they form an interesting family of graphs with a dominating edge and $2n-4$ edges.
\end{abstract}

\section{Introduction}

A graph $G$ is called \emph{diameter-$d$-critical} if its diameter is $d$, and the deletion of any edge increases the diameter. Diameter-$d$-critical graphs and their extremal properties have been studied since at least the 1960s, see for example the few selected references~\cite{bollobas,CH79,erdos,Loh-Ma,ore,plesnik,plesnik2}.

The case of diameter-$2$-critical graphs (D2C graphs for short), being the simplest nontrivial case, has been the focus of much work. Many famous and beautiful graphs are D2C. Such examples are: complete bipartite graphs; the $5$-cycle (more generally, the set of Moore graphs of diameter $2$, the others being the Petersen graph, the Hoffman-Singleton graph, and a hypothetical graph of order $3250$~\cite{HoSi60}); the Wagner graph; the Chv\'atal graph; the Gr\"otzsch graph; the Clebsch graph... When the order is at least~$3$, D2C graphs coincide with \emph{minimally triangle-saturated graphs}, see~\cite{MacDougall-Eggleton}.

In the 1960s and 1970s, Ore, Murty, Ple\'snik and Simon independently stated the following conjecture, generally known under the name of \emph{Murty-Simon conjecture} (see~\cite{CH79,ore,plesnik}). A complete bipartite graph is \emph{balanced} if the sizes of the two parts differ by at most $1$.

\begin{conjecture}\label{conj:MS}
Any D2C graph of order $n$ has at most $\left\lfloor n^2/4\right\rfloor$ edges, with equality if and only if $G$ is a balanced complete bipartite graph.
\end{conjecture}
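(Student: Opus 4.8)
The plan is to split the analysis according to whether $G$ contains a triangle, because the triangle-free case reduces to a classical extremal result while the presence of triangles is exactly what makes the problem hard. First I would record the basic consequence of criticality: in a triangle-free graph no two adjacent vertices share a common neighbor, so deleting any edge $uv$ leaves $u$ and $v$ without a common neighbor and hence at distance at least $3$. Thus \emph{every} triangle-free graph of diameter $2$ is automatically D2C. For such graphs the claimed bound is precisely Mantel's theorem: a triangle-free graph on $n$ vertices has at most $\lfloor n^2/4\rfloor$ edges, with equality only for the balanced complete bipartite graph $K_{\lceil n/2\rceil,\lfloor n/2\rfloor}$, which itself has diameter $2$. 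This settles the triangle-free case together with the uniqueness of the extremal graph, and simultaneously identifies the conjectured extremal family.

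It remains to handle graphs that contain a triangle, and here I would reduce the entire statement to the single assertion
\begin{center}
\emph{every D2C graph of order $n$ containing a triangle has strictly fewer than $\lfloor n^2/4\rfloor$ edges,}
\end{center}
since, combined with the triangle-free case, this yields both the bound and the characterization of equality in one stroke. To attack this assertion I would exploit criticality quantitatively. For each edge $uv$, the fact that removing it increases the diameter produces a \emph{witness}: a vertex (possibly $u$ or $v$ itself) whose only length-$2$ connection to some other vertex runs through $uv$. Encoding these witnesses and double-counting them against the non-edges of $G$ (each non-adjacent pair having a common neighbor, since $\operatorname{diam}(G)=2$) gives a relation between the number of edges, the number of triangles, and the number of criticality witnesses. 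A clean alternative is to pass to the complement and use the equivalence with total-domination edge-critical graphs, under which the target bound becomes a lower bound on $|E(\overline{G})|$; structural results on such critical graphs can then be brought to bear. In either formulation the known dominating-edge case serves as a base configuration, since a triangle tends to force a vertex of large degree that one hopes to promote to a dominating structure.

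The hard part, and the reason the conjecture remains open, is precisely the regime of graphs that contain triangles but admit \emph{no} dominating edge and have only moderate order, where neither Mantel's bound nor the dominating-edge machinery applies directly. A natural inductive scheme deletes a well-chosen vertex and argues that the edge count cannot approach $\lfloor n^2/4\rfloor$; the obstacle is that deleting a vertex need not preserve either diameter $2$ or criticality, so one must control how the witness structure degrades. For large $n$ a stability (regularity-type) argument is available and yields the result, but interpolating down to all orders, while maintaining the \emph{strict} inequality that rules out non-bipartite extremal graphs, is where the difficulty concentrates, and where the present paper contributes by establishing the strengthened bound $\lfloor n^2/4\rfloor - 2$ in the dominating-edge subcase.
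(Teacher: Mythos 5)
The statement you were asked to prove is Conjecture~\ref{conj:MS}, which is an \emph{open} conjecture: the paper does not prove it, and only establishes partial results towards it (most notably the case of graphs with a dominating edge, together with the stronger bound of Theorem~\ref{thm:main}). Your proposal correctly disposes of the triangle-free case: the observation that every triangle-free graph of diameter~$2$ is automatically D2C is right, and Mantel's theorem then gives both the bound $\lfloor n^2/4\rfloor$ and the characterization of equality. This is exactly the reduction the paper itself records, so that part is fine, though it is the easy and well-known half.

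The rest of your proposal, however, is not a proof but a research plan, and the gap it leaves is the entire substance of the conjecture. You reduce everything to the assertion that every D2C graph containing a triangle has strictly fewer than $\lfloor n^2/4\rfloor$ edges, and then offer only candidate strategies for it: a witness/double-counting scheme from which no inequality is ever actually derived, a passage to the complement and total-domination criticality that is named but never used, and an inductive vertex-deletion scheme whose fatal obstacle (deletion preserves neither diameter~$2$ nor criticality) you identify without overcoming. You even state explicitly that the conjecture remains open in this regime, which is an honest assessment but confirms that nothing has been proved beyond Mantel. For comparison, what the paper actually proves is only the subcase where $G$ has a dominating edge $uv$: there the vertex set is partitioned into two parts $X$ and $Y$ built from the neighbourhoods of $u$ and $v$, every edge inside a part is shown to be critical for a pair straddling the partition (Lemma~\ref{clm:assignment}), and this yields an injective map $f$ from such edges to non-edges between the parts (Lemma~\ref{clm:assignIsInj}), giving $m\leq |X||Y|\leq \lfloor n^2/4\rfloor$ via Lemma~\ref{clm:MSBound}; non-surjectivity of $f$ for non-bipartite graphs (Lemma~\ref{lem:sol_of_conj1}) then settles the equality case for this class. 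That injective-assignment argument is the concrete machinery your ``witness'' sketch would need to become, and even then it is confined to the dominating-edge case; no argument currently known, in this paper or elsewhere, covers all triangle-containing D2C graphs of arbitrary order.
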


Many partial results towards Conjecture~\ref{conj:MS} have been obtained. The best general upper bound is due to Fan~\cite{Fan}, who showed that any D2C graph of order $n$ has less than $0.2532n^2$ edges. In the same paper, Fan also proved the conjecture for graphs of order at most~$24$ and~$26$. On the other hand, using the Regularity Lemma, F\"uredi~\cite{Furedi} proved Conjecture~\ref{conj:MS} for graphs with order $n>n_0$, where $n_0$ is a gigantic number: roughly a tower of $2$'s of height $10^{14}$.

It is not difficult to observe that any bipartite graph of diameter~$2$ must be a complete bipartite graph. Thus, Conjecture~\ref{conj:MS} restricted to bipartite graphs is well-understood; when studying D2C graphs, we can consider only non-bipartite graphs.

Hanson and Wang observed in~\cite{HW03} that a non-bipartite graph is D2C if and only if its complement is $3$-total domination critical. This launched a fruitful research path that led to many results around Conjecture~\ref{conj:MS}: see the papers~\cite{BHHH,HH12,diam3,mindeg,correction,clawfree,connect,diam2or3} and the survey~\cite{survey}.

Conjecture~\ref{conj:MS} holds for triangle-free graphs: this is exactly Mantel's theorem~\cite{mantel}, which states that a triangle-free graph of order $n$ has at most $\lfloor n^2/4\rfloor$ edges, with equality if and only if $G$ is a balanced complete bipartite graph. It is not difficult to observe that a graph is both D2C and triangle-free if and only if it is \emph{maximal triangle-free}~\cite{Barefoot} (and equivalently, triangle-free with diameter $2$). Maximal triangle-free graphs are widely studied, see for example~\cite{BS14,MDE02,GK93,MacDougall-Eggleton}.

It is known that no triangle-free non-bipartite D2C graph has a dominating edge~\cite{BHHH} (a \emph{dominating edge} is a pair of adjacent vertices that have no common non-neighbour). Thus, another case of interest is the set of D2C graphs \emph{with} a dominating edge.\footnote{Equivalently, the set of D2C graphs with total domination number $2$.} A non-bipartite D2C graph has a dominating edge if and only if its complement has diameter~$3$~\cite{HW03} (note that the complement of a non-bipartite D2C graph has diameter either $2$ or $3$~\cite{diam2or3}). Conjecture~\ref{conj:MS} was proved for D2C graphs with a dominating edge in the series of papers~\cite{HW03,diam3,correction,Wang-arxiv}.

It has been observed that the bound of Conjecture~\ref{conj:MS} might be strengthened. Such a claim was made by F\"uredi in the article~\cite{Furedi} containing his proof of the conjecture for very large graphs. In the conclusion of~\cite{Furedi} (Theorem~7.1), he claimed, without proof, that his method can be extended to show the following: any sufficiently large non-bipartite D2C graph of order $n$ has at most $\lfloor(n-1)^2/4\rfloor+1$ edges, with equality if and only if the graph is obtained from an even-order balanced complete bipartite graph by subdividing an edge once. Such a graph is also obtained from a $5$-cycle by replacing two adjacent vertices by two same-size independent sets of \emph{twins} (vertices with the same open neighbourhood). Nevertheless, one might perhaps need to consider F\"uredi's claim with caution, as it appears to be partly false. Indeed, it was observed in~\cite{BHHH} that one can also obtain a D2C graph with the same number of edges by replacing \emph{three} vertices $x_1$, $x_2$, $x_3$ of a $5$-cycle by three independent sets $X_1$, $X_2$, $X_3$ of twins, under the following conditions: (1) $x_1$, $x_2$ and $x_3$ are consecutive on the $5$-cycle; (2) $|X_2|\in\{\lfloor\tfrac{n-3}{2}\rfloor,\lceil\tfrac{n-3}{2}\rceil\}$, where $n$ is the number of vertices of the obtained graph. Let us call $\F$, the family of these ``expanded $5$-cycles'' that satisfy (1) and (2): the construction is depicted in Figure~\ref{fig:C5-expansion}. 

\begin{figure}[!ht]
  \centering
\begin{tikzpicture}[scale=1]
  \node[jolinoeud,scale=0.8](0) at (360/5*3+18:1) {};
  \node[jolinoeud,scale=0.8](1) at (360/5*4+18:1) {};
  \draw (0)--(1);
  \draw (360/5*2+18:1) circle (0.4);
  \draw (360/5*1+18:1) circle (0.4);
  \draw (360/5*0+18:1) circle (0.4);
  \draw (360/5*1+18:1) node[scale=0.6] {$X_2$};
  \draw (360/5*0+18:1) node[scale=0.6] {$X_1$};
  \draw (360/5*2+18:1) node[scale=0.6] {$X_3$};
  \tikzset{shift={(360/5*2+18:1)}}
  \draw (0)--(220:0.4);
  \draw (0)--(360:0.4);
  \node (b1) at (120:0.4) {};
  \node (b2) at (330:0.4) {};
  \tikzset{shift={(360/5*2+18:-1)}}
  \tikzset{shift={(360/5*0+18:1)}}
  \draw (1)--(320:0.4);
  \draw (1)--(180:0.4);
  \node (c1) at (60:0.4) {};
  \node (c2) at (210:0.4) {};
  \tikzset{shift={(360/5*0+18:-1)}}
  \tikzset{shift={(360/5*1+18:1)}}
  \node (a1) at (120:0.4) {};
  \node (a3) at (60:0.4) {};
  \node (a2) at (270:0.4) {};
  \node (a4) at (270:0.4) {};
  \tikzset{shift={(360/5*1+18:-1)}}
  \draw (a1.center)--(b1.center);
  \draw (a2.center)--(b2.center);
  \draw (a3.center)--(c1.center);
  \draw (a4.center)--(c2.center);
\end{tikzpicture}
\caption{The infinite family $\F$ of expanded $5$-cycles: we have $|X_2|\in\{\lfloor\tfrac{n-3}{2}\rfloor,\lceil\tfrac{n-3}{2}\rceil\}$.}
\label{fig:C5-expansion}
\end{figure}
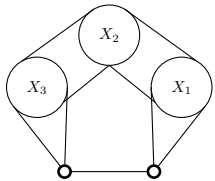

The authors of~\cite{BHHH}, unaware of F\"uredi's claim, focused their attention on the case of non-bipartite D2C graphs without a dominating edge (recall that this class includes all non-bipartite D2C triangle-free graphs).\footnote{Note that the terminology used in~\cite{BHHH} is the equivalent one of total domination criticality for the complement of the graphs.} They conjectured that, for this class, the corrected version of F\"uredi's claim is true:

\begin{conjecture}[Balbuena, Hansberg, Haynes, Henning \cite{BHHH}]\label{conj:BHHH}
Let $G$ be a non-bipartite D2C graph without a dominating edge. Then, $G$ has at most $\left\lfloor(n-1)^2/4\right\rfloor+1$ edges. For sufficiently large $n$, equality holds if and only if $G$ belongs to $\F$.
\end{conjecture}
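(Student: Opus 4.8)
The plan is to combine a stability analysis with the rigid local structure imposed by diameter-$2$-criticality, aiming for the bound for all $n$ and the extremal characterisation for large $n$. Write $n=|V(G)|$ and $m=|E(G)|$, and record the two standing hypotheses: diameter $2$ forces every non-adjacent pair of vertices to have a common neighbour, while the absence of a dominating edge means that every edge $uv$ has a common non-neighbour, i.e.\ some $z\notin N[u]\cup N[v]$. The first move is to turn criticality into local data. For each edge $uv$, the graph $G-uv$ has diameter at least $3$, so some pair is at distance $2$ in $G$ but at distance at least $3$ in $G-uv$; a short case analysis shows this pair is either $\{u,v\}$ itself, in which case $u$ and $v$ have no common neighbour, or a pair $\{u,y\}$ with $y\notin N(u)$ whose only common neighbour with $u$ is $v$ (and symmetrically with $u,v$ exchanged). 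Recording one such \emph{witness} per edge, and bounding how many edges can share a witness, is what converts the global criticality hypothesis into edge-by-edge constraints.

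For the upper bound I would first treat large $n$ by extending F\"uredi's regularity-based argument: since his proof of Conjecture~\ref{conj:MS} shows that a large non-bipartite D2C graph satisfies $m<n^2/4$, one can push the same machinery to the sharper additive constant and obtain $m\le\lfloor(n-1)^2/4\rfloor+1$ for $n$ above a threshold. The conceptual core here is a stability statement: if $m$ is within $o(n^2)$ of the Mantel value then $G$ is \emph{almost bipartite}, meaning $V(G)$ splits into two parts carrying all but $o(n^2)$ edges. I would then use the witness catalogue together with the no-dominating-edge condition to control the $o(n^2)$ ``defect'' edges, showing that they cannot be distributed freely but must collapse onto a short odd skeleton. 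Checking the value itself is routine: for $G\in\F$ with $a=|X_1|+|X_3|$ one gets $m=a(n-1-a)+1$, which is maximised at $a=\lfloor(n-1)/2\rfloor$ and equals $\lfloor(n-1)^2/4\rfloor+1$, so the target constant is exactly the one the extremal family attains.

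For the equality clause I would show that an extremal $G$ must make every counting inequality tight, and then decode the forced structure: a large independent ``middle'' class $X_2$ of mutually twin vertices, two further independent classes $X_1,X_3$ joined completely across the skeleton, and the two surviving vertices of the underlying $5$-cycle forming the leftover edge, with every cross-adjacency dictated by the diameter-$2$ and criticality requirements and no additional edge allowed. The optimisation pinning $a=\lfloor(n-1)/2\rfloor$ translates into the constraint $|X_2|\in\{\lfloor\tfrac{n-3}{2}\rfloor,\lceil\tfrac{n-3}{2}\rceil\}$, thereby identifying $G$ as a member of $\F$; the ``sufficiently large $n$'' hypothesis is exactly what is needed to exclude the finite list of sporadic extremal graphs that the excerpt warns must be set aside.

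The genuine obstacle is proving the bound for \emph{all} $n$ rather than only for $n$ past a threshold. The only unconditional estimate valid for every $n$ is Fan's $0.2532\,n^2$, which is strictly weaker than $n^2/4$, so the stability step --- which needs $m$ to be genuinely close to $n^2/4$ before it can conclude near-bipartiteness --- has nothing to feed on in the dense-but-far-from-bipartite regime. This is precisely the gap that forces F\"uredi's Regularity-Lemma argument to demand an astronomically large $n$, and closing it for small and moderate $n$ without a regularity-type input is where I expect the real difficulty to lie; it is also the reason the statement is still only a conjecture. An alternative handle worth trying in parallel is the complementary total-domination reformulation, which might give a cleaner combinatorial route to the dense case than the edge-counting does.
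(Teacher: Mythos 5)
There is no proof to compare against: the statement you were asked to prove is Conjecture~\ref{conj:BHHH}, which the paper states as an \emph{open} conjecture (attributed to~\cite{BHHH}) and never proves. The paper only records that it is known for triangle-free graphs (via~\cite{BHHH,Barefoot}), and its own main result, Theorem~\ref{thm:main}, concerns the complementary class of graphs \emph{with} a dominating edge, with a weaker constant. So the only question is whether your proposal constitutes a proof, and it does not: it is a plan whose central steps are asserted rather than executed. The claim that F\"uredi's regularity machinery ``can be pushed'' to the sharper bound $\lfloor(n-1)^2/4\rfloor+1$ is precisely F\"uredi's own unproved claim (his Theorem~7.1), and the paper explicitly warns that this claim is partly false: F\"uredi misidentified the extremal family, missing the three-class expansions that make up $\F$. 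So the stability-and-decoding step you outline is not a routine extension of existing machinery; it would have to be carried out from scratch and would have to discover the corrected extremal family, which no one has done. Your verification that members of $\F$ attain $\lfloor(n-1)^2/4\rfloor+1$ edges is correct, but it only confirms the constant, not the bound.

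The second, fatal gap is one you concede yourself: the conjecture asserts the bound for \emph{all} $n$, and your approach has, in your own words, ``nothing to feed on'' below F\"uredi's astronomical threshold, since the only unconditional estimate available is Fan's $0.2532\,n^2$. A proof attempt that establishes (at best) the large-$n$ regime, conditionally on an unproven extension of a partly erroneous claim, does not prove the statement. If you want to make partial progress in the spirit of this paper, the productive route is the one the authors take: restrict to a structured subclass (they choose graphs with a dominating edge, where the sets $P_{uv},S_{uv},S_u,S_v$ and the injective assignment $f$ of internal edges to $f$-free non-edges give hard, unconditional counting), and prove a weaker but unconditional gap below $\lfloor n^2/4\rfloor$ — rather than attempt the full conjecture by stability.
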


Conjecture~\ref{conj:BHHH} was proved in~\cite{BHHH} for triangle-free graphs without the restriction on the order (the first part of the statement was also proved independently for triangle-free graphs in~\cite{Barefoot}).

One non-bipartite D2C graph that does not satisfy the bound $\lfloor(n-1)^2/4\rfloor+1$ is known: it is the graph $H_5$ of Figure~\ref{fig-h5} (which has six vertices, eight edges, and a dominating edge). This fact was observed in~\cite{Barefoot,HH12}; the authors of~\cite{Barefoot} asked whether this graph is the only exception to the bound.

\begin{figure}[!ht]
	\centering
	\begin{tikzpicture}
		\node (h5) at (0,0) {
			\begin{tikzpicture}
			\node[jolinoeud](0) at (0,0.5) {};
			\node[jolinoeud](1) at (0.5,0) {};
			\node[jolinoeud](2) at (0.5,1) {};
			\node[jolinoeud](3) at (1.5,0) {};
			\node[jolinoeud](4) at (1.5,1) {};
			\node[jolinoeud](5) at (1,1.75) {};
			\draw (1)--(0)--(2);
			\draw (1)--(2)--(4)--(3)--(1);
			\draw (4)--(5)--(2);
			\draw[line width=0.75mm] (2)to(4);
			\end{tikzpicture}
		};
	\end{tikzpicture}
	\caption{The graph $H_5$, a D2C graph with a dominating edge (in bold).}
	\label{fig-h5}
\end{figure}
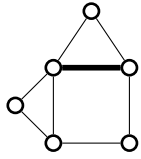

We have performed a computer search on all graphs of order up to $11$. This search has found exactly thirteen non-bipartite D2C graphs that are not members of $\F$ but nevertheless reach the bound $\lfloor(n-1)^2/4\rfloor+1$ (see Figure~\ref{fig:thirteen-graphs}).\footnote{Note that the complete list of D2C graphs of order at most $7$ was given in~\cite{MacDougall-Eggleton}; Graph (m) is studied in~\cite{plesnik2} as a planar D2C graph with every edge in a triangle.} These graphs have order $7$, $8$ or $9$: there are no examples of order $10$ and $11$. Only three of these graphs have no dominating edge.

\begin{figure}[!ht]
  \centering
	\begin{tikzpicture}
		\node (g7-1) at (-3,0) {
			\begin{tikzpicture}
			\node[jolinoeud](0) at (-1,0.5) {};
			\node[jolinoeud](1) at (-0.5,0) {};
			\node[jolinoeud](2) at (-0.5,1) {};
			\node[jolinoeud](3) at (0.5,0) {};
			\node[jolinoeud](4) at (0.5,1) {};
			\node[jolinoeud](5) at (0,1.35) {};
			\node[jolinoeud](5p) at (0,1.75) {};
			\draw (1)--(0)--(2);
			\draw (1)--(2)--(4)--(3)--(1);
			\draw (4)--(5)--(2);
			\draw (4)--(5p)--(2);
			\draw[line width=0.75mm] (2)to(4);
                        \node at (0,-0.5) {(a)};
			\end{tikzpicture}
		};
		\node (g7-2) at (0,0) {
			\begin{tikzpicture}
			\node[jolinoeud](0) at (-1,0.5) {};
			\node[jolinoeud](1) at (-0.5,0) {};
			\node[jolinoeud](2) at (-0.5,1) {};
			\node[jolinoeud](3) at (0.5,0) {};
			\node[jolinoeud](4) at (0.5,1) {};
			\node[jolinoeud](5) at (0,1.75) {};
			\node[jolinoeud](6) at (0,0.5) {};
			\draw (1)--(0)--(2);
			\draw (1)--(2)--(4)--(3)--(1);
			\draw (4)--(5)--(2);
			\draw (4)--(6)--(1);
			\draw[line width=0.75mm] (2)to(4);
                        \node at (0,-0.5) {(b)};
			\end{tikzpicture}
		};
		\node (g7-3) at (3,0) {
			\begin{tikzpicture}
			\node[jolinoeud](0) at (-1,0.5) {};
			\node[jolinoeud](1) at (-0.5,0) {};
			\node[jolinoeud](2) at (-0.5,1) {};
			\node[jolinoeud](3) at (0.5,0) {};
			\node[jolinoeud](4) at (0.5,1) {};
			\node[jolinoeud](5) at (0,1.75) {};
			\node[jolinoeud](6) at (0,0.5) {};
			\draw (1)--(0)--(2);
			\draw (1)--(2)--(4)--(3)--(1);
			\draw (4)--(5)--(2);
			\draw (2)--(6)--(3);
			\draw[line width=0.75mm] (2)to(4);
                        \node at (0,-0.5) {(c)};
			\end{tikzpicture}
		};
		\node (g8-1) at (-3,-3) {
			\begin{tikzpicture}
			\node[jolinoeud](0) at (-1,1) {};
			\node[jolinoeud](1) at (-0.5,0) {};
			\node[jolinoeud](2) at (-0.5,1) {};
			\node[jolinoeud](3) at (0.5,0) {};
			\node[jolinoeud](4) at (0.5,1) {};
			\node[jolinoeud](5) at (0,1.5) {};
			\node[jolinoeud](6) at (-0.5,2) {};
			\node[jolinoeud](7) at (0.5,2) {};
			\draw (1)--(0)--(2);
			\draw (1)--(2)--(4)--(3)--(1);
			\draw (4)--(5)--(2);
			\draw (0)--(6)--(7)--(5)--(6);
                        \draw[bend right] (3) to (7);
                        \node at (0,-0.5) {(d)};
			\end{tikzpicture}
		};

		\node (g8-2) at (0,-3) {
			\begin{tikzpicture}
			\node[jolinoeud](0) at (-1,0.5) {};
			\node[jolinoeud](1) at (-0.5,0) {};
			\node[jolinoeud](2) at (-0.5,1) {};
			\node[jolinoeud](3) at (0.5,0) {};
			\node[jolinoeud](4) at (0.5,1) {};
			\node[jolinoeud](5) at (0,1.5) {};
			\node[jolinoeud](6) at (-1,1.5) {};
			\node[jolinoeud](7) at (1,2) {};
			\draw (1)--(0)--(2);
			\draw (1)--(2)--(4)--(3)--(1);
			\draw (4)--(5)--(2);
			\draw (0)--(6)--(2);
			\draw (5)--(6)--(7)--(3);
                        \node at (0,-0.5) {(e)};
			\end{tikzpicture}
		};
		\node (g8-3) at (3,-3) {
			\begin{tikzpicture}
			\node[jolinoeud](0) at (0,-0.25) {};
                        \node[jolinoeud](7) at (0,1.75) {};
			\node[jolinoeud](1) at (-0.33,0.5) {};
			\node[jolinoeud](2) at (-0.33,1) {};
			\node[jolinoeud](3) at (-1,0.75) {};
			\node[jolinoeud](4) at (0.33,0.5) {};
			\node[jolinoeud](5) at (0.33,1) {};
			\node[jolinoeud](6) at (1,0.75) {};
			\draw (0)--(3)--(1)--(0)--(4)--(6)--(0);
			\draw (1)--(2)--(3)--(7)--(6)--(5)--(4);
			\draw (5)--(2);
                        \node at (0,-0.75) {(f)};
			\end{tikzpicture}
		};
		\node (g8-4) at (-6,-6) {
			\begin{tikzpicture}
			\node[jolinoeud](0) at (-1,0.5) {};
			\node[jolinoeud](1) at (-0.5,0) {};
			\node[jolinoeud](2) at (-0.5,1) {};
			\node[jolinoeud](3) at (0.5,0) {};
			\node[jolinoeud](4) at (0.5,1) {};
			\node[jolinoeud](5) at (0,1.75) {};
			\node[jolinoeud](6) at (0.2,0.3) {};
			\node[jolinoeud](7) at (-0.1,0.6) {};
			\draw (1)--(0)--(2);
			\draw (1)--(2)--(4)--(3)--(1);
			\draw (4)--(5)--(2);
			\draw (1)--(6)--(4);
			\draw (3)--(6)--(7)--(2);
			\draw[line width=0.75mm] (2)to(4);
                        \node at (0,-0.5) {(g)};
			\end{tikzpicture}
		};
 		\node (g8-5) at (-3,-6) {
			\begin{tikzpicture}
			\node[jolinoeud](1) at (-0.75,0) {};
			\node[jolinoeud](2) at (-0.75,1) {};
			\node[jolinoeud](3) at (0.75,0) {};
			\node[jolinoeud](4) at (0.75,1) {};
			\node[jolinoeud](5) at (0,1.75) {};
			\node[jolinoeud](6) at (-0.375,0.5) {};
			\node[jolinoeud](7) at (0,0.5) {};
			\node[jolinoeud](8) at (0.375,0.5) {};
			\draw (1)--(2)--(4)--(3)--(1);
			\draw (4)--(5)--(2);
			\draw (1)--(6)--(2)--(7)--(6);
			\draw (7)--(8)--(3);
			\draw (4)--(8);
			\draw[line width=0.75mm] (2)to(4);
                        \node at (0,-0.5) {(h)};
			\end{tikzpicture}
		};
 		\node (g8-6) at (0,-6) {
			\begin{tikzpicture}
			\node[jolinoeud](0) at (-1,0.5) {};
			\node[jolinoeud](1) at (-0.5,0) {};
			\node[jolinoeud](2) at (-0.5,1) {};
			\node[jolinoeud](3) at (0.5,0) {};
			\node[jolinoeud](4) at (0.5,1) {};
			\node[jolinoeud](5) at (0,1.75) {};
			\node[jolinoeud](6) at (-0.25,0.5) {};
			\node[jolinoeud](7) at (.25,0.5) {};
			\draw (1)--(0)--(2);
			\draw (1)--(2)--(4)--(3)--(1);
			\draw (4)--(5)--(2);
			\draw (2)--(6)--(7)--(4);
			\draw (3)--(6) (7)--(1);
			\draw[line width=0.75mm] (2)to(4);
                        \node at (0,-0.5) {(i)};
			\end{tikzpicture}
		};
 		\node (g8-7) at (3,-6) {
			\begin{tikzpicture}
			\node[jolinoeud](1) at (-0.5,0) {};
			\node[jolinoeud](2) at (-0.5,1) {};
			\node[jolinoeud](3) at (0.5,0) {};
			\node[jolinoeud](4) at (0.5,1) {};
			\node[jolinoeud](5) at (0,1.35) {};
			\node[jolinoeud](6) at (0,1.75) {};
			\node[jolinoeud](7) at (1.25,1) {};
			\node[jolinoeud](8) at (0.75,0.75) {};
			\draw (1)--(2)--(4)--(3)--(1);
			\draw (4)--(5)--(2);
			\draw (2)--(6)--(7)--(4)--(8)--(3);
			\draw (3)--(7)--(8);
			\draw[line width=0.75mm] (2)to(4);
                        \node at (0,-0.5) {(j)};
			\end{tikzpicture}
		};
 		\node (g8-8) at (6,-6) {
			\begin{tikzpicture}
			\node[jolinoeud](1) at (-0.5,0) {};
			\node[jolinoeud](2) at (0.5,0) {};
			\node[jolinoeud](3) at (-1,0.875) {};
			\node[jolinoeud](4) at (1,0.875) {};
			\node[jolinoeud](5) at (-0.5,1.75) {};
			\node[jolinoeud](6) at (0.5,1.75) {};
			\node[jolinoeud](7) at (0,0.5) {};
			\node[jolinoeud](8) at (0,1.25) {};
			\draw (2)--(1)--(3)--(5)--(6)--(4)--(2);
			\draw (2)--(8)--(7)--(4);
			\draw (3)--(7) (1)--(8);
			\draw (5)--(8)--(6);
			\draw[line width=0.75mm] (7)to(8);
                        \node at (0,-0.5) {(k)};
			\end{tikzpicture}
		};
 		\node (g9-1) at (-1.5,-9) {
			\begin{tikzpicture}
			\node[jolinoeud](0) at (0,0) {};
			\node[jolinoeud](1) at (0,0.66) {};
			\node[jolinoeud](2) at (0,1.33) {};
			\node[jolinoeud](3) at (0,2) {};
			\node[jolinoeud](4) at (1,2) {};
			\node[jolinoeud](5) at (1,1.33) {};
			\node[jolinoeud](6) at (1,0.66) {};
			\node[jolinoeud](7) at (0.5,1.33) {};
			\node[jolinoeud](8) at (-0.75,0.75) {};
			\draw (0)--(1)--(2)--(3)--(4)--(5)--(6)--(0);
			\draw (0)--(8)--(3);
			\draw (5)--(7)--(4);
			\draw (2)--(7)--(1)--(6);
                        \draw[bend right] (3) to (0);
			\draw[line width=0.75mm] (7)to(0);
                        \node at (0,-0.5) {(l)};
			\end{tikzpicture}
		};
 		\node (g9-2) at (1.5,-9) {
			\begin{tikzpicture}
			\node[jolinoeud](0) at (0,1.33) {};
			\node[jolinoeud](1) at (0,2) {};
			\node[jolinoeud](2) at (0,0.66) {};
			\node[jolinoeud](3) at (0,0) {};
			\node[jolinoeud](4) at (0.66,1.33) {};
			\node[jolinoeud](5) at (0.66,0.66) {};
			\node[jolinoeud](6) at (-0.66,1.33) {};
			\node[jolinoeud](7) at (-0.66,0.66) {};
			\node[jolinoeud](8) at (1.66,1) {};
			\draw (0)--(1)--(4)--(0)--(5)--(4);
			\draw (3)--(5)--(2)--(3)--(7)--(2);
			\draw (0)--(7)--(6)--(0);
			\draw (6)--(1)--(8)--(3);
			\draw[line width=0.75mm] (1) .. controls ++(-1.25,0) and ++(-1.25,0) ..(3);
                        \node at (0,-0.5) {(m)};
			\end{tikzpicture}
		};
        \end{tikzpicture}
	\caption{The thirteen non-bipartite D2C graphs of order $n\leq 11$ with $\lfloor(n-1)^2/4\rfloor+1$ edges that are not in the family $\F$. Bold edges are dominating. Only the graphs (d), (e) and (f) have no dominating edge.}
	\label{fig:thirteen-graphs}
\end{figure}
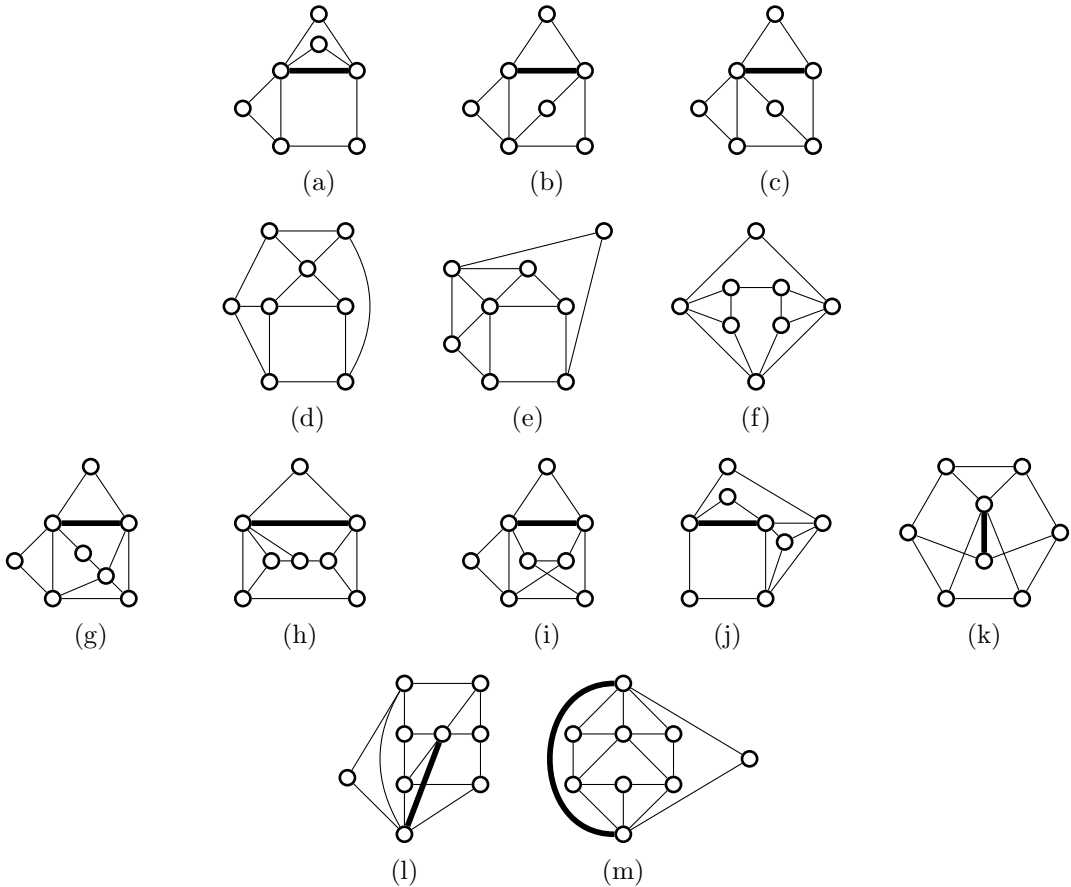

Closer scrutiny and the aforemetioned computer search suggest that $H_5$ could well be the only small non-bipartite D2C exception to the bound of Conjecture~\ref{conj:BHHH} (this was also suggested in~\cite{Barefoot}). Moreover, it seems likely that only the thirteen graphs not in $\F$ from Figure~\ref{fig:thirteen-graphs} reach this bound. This leads us to propose a stronger version of Conjecture~\ref{conj:BHHH}, as follows.

\begin{conjecture}\label{conj}
Let $G$ be a non-bipartite D2C graph of order $n$. If $G$ is not $H_5$, then $G$ has at most $\left\lfloor(n-1)^2/4\right\rfloor+1$ edges, with equality if and only if $G$ belongs to $\F$ or is one of the thirteen graphs from Figure~\ref{fig:thirteen-graphs}.
\end{conjecture}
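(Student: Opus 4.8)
The plan is to split the argument according to whether $G$ has a dominating edge, since the two regimes call for different tools and the exceptional extremal graphs distribute accordingly: the graphs (d), (e), (f) have no dominating edge, whereas $H_5$ together with the remaining ten sporadic graphs of Figure~\ref{fig:thirteen-graphs} do.

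First I would treat the case where $G$ has \emph{no} dominating edge. Here the target bound $\lfloor(n-1)^2/4\rfloor+1$ is exactly the first part of Conjecture~\ref{conj:BHHH}, which is already known for triangle-free graphs via a Mantel-type argument. So the work reduces to non-bipartite, non-triangle-free graphs with no dominating edge. I would exploit the structural consequence that, lacking a dominating edge, every edge $xy$ has a common non-neighbour; combined with diameter~$2$ this forces a large supply of length-$2$ paths, which I would convert into a discharging or edge-counting estimate anchored on a shortest odd cycle (one exists since $G$ is non-bipartite). For the equality case I would show that any extremal graph has such a constrained neighbourhood structure that it must coincide with a twin blow-up of $C_5$, thereby isolating $\F$ together with (d), (e), (f).

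Next, for the case where $G$ has a dominating edge $ab$, I would begin from the partition $A=N(a)\setminus N[b]$, $B=N(b)\setminus N[a]$, $C=N(a)\cap N(b)$, so that $V(G)=\{a,b\}\cup A\cup B\cup C$. Criticality says each edge $xy$ either joins two vertices with no common neighbour or lies on the only length-$2$ path between some non-adjacent pair; these conditions translate into strong restrictions on the adjacencies inside and between $A$, $B$, $C$. A first round of such analysis yields the bound $\lfloor n^2/4\rfloor-2$; since this is strictly larger than $\lfloor(n-1)^2/4\rfloor+1$ for $n\ge 7$, reaching the conjectured bound requires sharpening the count, bounding the edges incident to $C$ and the edges inside $A$ and $B$ more aggressively, so as to show that a near-extremal graph cannot afford a large common-neighbour set $C$ without breaking criticality. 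The extremal analysis is the delicate part: one must prove that equality forces the $C_5$-twin-expansion shape of $\F$ and then separately pin down the ten sporadic dominating-edge graphs (a)--(c), (g)--(m), with $H_5$ emerging as the unique graph exceeding the bound.

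I expect the main obstacle to be twofold. First, Conjecture~\ref{conj:BHHH} is itself open beyond the triangle-free case, so the no-dominating-edge branch is not a mere citation but demands genuinely new work on graphs containing triangles. Second, and probably harder, is the exact extremal characterization in the dominating-edge branch: turning an upper bound into a tight classification forces all the counting inequalities to be simultaneously sharp, and controlling every way the sporadic small graphs can appear, rather than only the asymptotic family $\F$, is precisely where the finitely many exceptions of Figure~\ref{fig:thirteen-graphs} would frustrate a clean induction or a regularity-based argument.
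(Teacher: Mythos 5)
Your proposal is not a proof, and indeed no proof could be fairly compared here: the statement you were given is Conjecture~\ref{conj}, which the paper itself leaves open. The paper offers only evidence for it --- a computer search over all graphs of order at most~$11$, and the partial result Theorem~\ref{thm:main} bounding the edge count by $\left\lfloor n^2/4\right\rfloor-2$ for non-bipartite D2C graphs with a dominating edge. Your text is a research plan whose two branches are each, as you yourself concede, open problems; conceding them does not discharge them. In the no-dominating-edge branch, you reduce to Conjecture~\ref{conj:BHHH} for graphs containing triangles, and the proposed mechanism (``a discharging or edge-counting estimate anchored on a shortest odd cycle'') is a hope, not an argument: no inequality is stated, no discharging rules are given, and nothing connects the abundance of length-$2$ paths to a bound as strong as $\left\lfloor(n-1)^2/4\right\rfloor+1$. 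In the dominating-edge branch the situation is the same but quantitatively starker: the ``first round of analysis'' you invoke to get $\left\lfloor n^2/4\right\rfloor-2$ is precisely the paper's main theorem, whose proof occupies all of Section~\ref{sec:main}, and the remaining distance from $\left\lfloor n^2/4\right\rfloor-2$ down to $\left\lfloor(n-1)^2/4\right\rfloor+1$ is roughly $n/2$, i.e.\ linear in $n$, not a constant to be absorbed by ``bounding the edges incident to $C$ more aggressively.'' The paper's conclusion explicitly flags closing this linear gap as open and even poses a Question about the true maximum for this class, suspecting the conjectured bound is not tight there.

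What your proposal does get right is the framing, and it matches how the paper organizes its partial knowledge: the dichotomy on the existence of a dominating edge, the fact that $\F$ and the graphs (d), (e), (f) of Figure~\ref{fig:thirteen-graphs} have no dominating edge while $H_5$ and the other ten sporadic graphs do, and the identification of the two genuine difficulties (the triangle-containing case of Conjecture~\ref{conj:BHHH}, and the extremal characterization). But a correct map of the obstacles is not a route past them. If you want to make progress along the paper's lines, the concrete next step it suggests is narrower than anything in your plan: in the dominating-edge case with $P_{uv}$ empty, improve the count of $f$-free non-edges beyond the constant guaranteed by Theorem~\ref{thm:strong2}, especially when each of $S_u$ and $S_v$ induces a single connected component and the $f$-orientation is acyclic; in the $P_{uv}\neq\emptyset$ case, extract more than the $O(1)$ savings that Lemma~\ref{clm:P_{uv}} currently yields.
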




In this paper, we focus on the case of non-bipartite D2C graphs with a dominating edge. Note that the graphs in $\F$ do not have a dominating edge. As a piece of evidence towards Conjecture~\ref{conj}, our main theorem shows that the bound of Conjecture~\ref{conj:MS} is not tight for this case:

\begin{theorem}\label{thm:main}
Let $G$ be a non-bipartite D2C graph with $n$ vertices having a dominating edge. If $G$ is not $H_5$, then $G$ has at most $\left\lfloor n^2/4\right\rfloor-2$ edges.
\end{theorem}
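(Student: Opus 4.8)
The plan is to exploit the dominating edge $uv$ to impose a near-bipartite structure on $G$, reprove the Murty--Simon bound by a charging argument driven by criticality, and then squeeze out two extra units of slack in the non-bipartite case. First I would fix a dominating edge $uv$ and partition the remaining vertices into $A = N(u)\setminus N[v]$, $B = N(v)\setminus N[u]$ and $C = N(u)\cap N(v)$, with $a,b,c$ their sizes and $n = a+b+c+2$. The crucial simplification is that, since $u$ is adjacent to everything in $A\cup C$ and $v$ to everything in $B\cup C$, every pair of distinct vertices except those in $A\times B$ already has $u$ or $v$ as a common neighbour; hence diameter $2$ reduces to the single condition that every pair $(x,y)\in A\times B$ is adjacent or has a common neighbour (necessarily in $W:=A\cup B\cup C$). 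Viewing the bipartition $S=\{v\}\cup A$, $T=\{u\}\cup B$ as a ``complete bipartite backbone'', I would record the exact identity $|E(G)| = (1+a)(1+b)-\mu+e(A)+e(B)+\kappa$, where $\mu$ is the number of non-adjacent pairs in $A\times B$ and $\kappa$ counts all edges meeting $C$. Since $(1+a)(1+b)\le\lfloor n^2/4\rfloor$ when $c=0$, the task becomes to bound the surplus $e(A)+e(B)+\kappa$ by $\mu$ plus the gain that the $c$ vertices of $C$ contribute to $\lfloor n^2/4\rfloor$, and by a further $2$ in the non-bipartite case.

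The heart of the Murty--Simon part is a charging argument. For an edge $x_1x_2$ inside $A$, the common neighbour $u$ rules out the ``empty-intersection'' reason for criticality, so deleting $x_1x_2$ must push some pair whose unique length-$2$ connection ran through it to distance $>2$; unwinding the definition forces a vertex $q\in B$ with, say, $q\sim x_1$, $q\not\sim x_2$ and $N(x_2)\cap N(q)=\{x_1\}$. This assigns to $x_1x_2$ a \emph{non-adjacent} pair $(x_2,q)\in A\times B$, and since such a pair together with its unique common neighbour recovers the edge, the assignment is injective; the symmetric statement holds for edges inside $B$, with images whose unique common neighbour lies in $B$ rather than $A$, so the two families of images are disjoint. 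Consequently $e(A)+e(B)\le\mu$, which already yields $|E(G)|\le(1+a)(1+b)\le\lfloor n^2/4\rfloor$ when $C=\varnothing$, with equality forcing a balanced complete bipartite graph. This reproves Conjecture~\ref{conj:MS} for the class and pins down its equality case.

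The harder part, and the main obstacle, is controlling $C$. Edges meeting $C$ are cheap in the backbone count but expensive against $\lfloor n^2/4\rfloor$, because each vertex of $C$ lowers the sizes of both $S$ and $T$; I would therefore use criticality of the edges $uz$ and $vz$ for $z\in C$ (each demanding a private witness whose unique common neighbour with $z$ is $u$, respectively $v$) to bound the neighbourhoods of the $C$-vertices and hence $e(A,C)+e(B,C)+e(C)$, proving that $\kappa$ can never beat the gain contributed by $C$. Combining this with the injection of the previous paragraph gives the Murty--Simon bound together with an explicit defect term measuring how far $G$ is from being complete bipartite.

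To finish, I would classify the graphs of defect $0$ and $1$. Defect $0$ forces a balanced complete bipartite graph, hence a bipartite one, excluded by hypothesis. In the non-bipartite case I expect that $C=\varnothing$ already forces defect at least $2$: a single extra edge inside $A$ or $B$ either unbalances the pair $(S,T)$ or, through the criticality witnesses, leaves at least two pairs of $A\times B$ uncharged. When $C\neq\varnothing$ the tightness of the $C$-bound from the third paragraph leaves only a very small, rigid configuration (forcing $a,b,c$ all tiny), which I would then verify by hand to be exactly $H_5$. The delicate points are therefore the $C$-bookkeeping and this near-extremal analysis isolating $H_5$, supplemented by a direct check of the finitely many graphs of small order, for which the asymptotic slack in the defect estimate is not yet decisive.
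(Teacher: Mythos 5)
Your setup is sound and is essentially the Hanson--Wang charging framework that the paper also builds on: the identity $|E(G)|=(1+a)(1+b)-\mu+e(A)+e(B)+\kappa$ is correct, and your injection of the edges inside $A$ (resp.\ $B$) into non-adjacent pairs of $A\times B$ is valid, giving $e(A)+e(B)\le\mu$. But there are two genuine gaps. First, the treatment of $C=N(u)\cap N(v)$ is asserted rather than proved: you never establish that $\kappa$ is at most the gain $\lfloor n^2/4\rfloor-(1+a)(1+b)$, and the ``private witness'' constraints you invoke (for $z\in C$, a vertex $t_z\in A$ with $N(z)\cap N(t_z)=\{u\}$, and symmetrically in $B$) do not by themselves bound the degrees of the $C$-vertices; a vertex of $C$ can a priori be adjacent to almost all of $A\cup B\cup C$ subject to those constraints, and ruling this out requires charging the $C$-edges against non-edges as well, i.e.\ essentially rebuilding the full injection. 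The paper sidesteps this entirely by a different bookkeeping: it puts $S_{uv}$ (your $C$), together with the set $P_{uv}$ of vertices for which $uv$ itself is critical, on the $u$-side of the bipartition ($X=\{v\}\cup S_u\cup P_{uv}\cup S_{uv}$, $Y=\{u\}\cup S_v$), so that a single injective map $f$ covers \emph{all} edges inside the two parts, including every edge meeting $C$, and $|X||Y|\le\lfloor n^2/4\rfloor$ holds automatically.

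Second, and more seriously, the actual content of the theorem---that the defect is at least $2$ for every non-bipartite graph in the class other than $H_5$---is only sketched. Your claims that ``defect $0$ forces a balanced complete bipartite graph'' and that a single extra edge inside $A$ or $B$ ``leaves at least two pairs of $A\times B$ uncharged'' are precisely what must be proved, and nothing in your injection forces them: nothing so far excludes $e(A)+e(B)=\mu>0$ with $(S,T)$ balanced, and such near-extremal configurations must be eliminated at \emph{every} order $n$, not just for small graphs, so ``a direct check of the finitely many graphs of small order'' cannot close the argument. This is where the paper does most of its work: when $P_{uv}=\emptyset$ it orients the edges inside $S_u$ and $S_v$ according to $f$ and shows that directed cycles, transitive triangles, sources and sinks each force $f$-free non-edges (Lemmas~\ref{clm:directed-cycle}--\ref{clm:source}), then analyses the nontrivial components of this oriented graph by diameter (Theorem~\ref{thm:strong2}); when $P_{uv}\neq\emptyset$ it runs a separate case analysis via Lemma~\ref{clm:P_{uv}}, and $H_5$ emerges exactly there (with $|S_{uv}|=|P_{uv}|=|S_u|=|S_v|=1$), its deficit of $1$ coming from the imbalance $||X|-|Y||=2$ rather than from free non-edges. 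None of this machinery, or any substitute for it, appears in your proposal, so the proof of the stated bound is missing where it matters most.
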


Along the way, we will also give some stronger bounds for special cases.
Our proof technique is an extension of the technique introduced by Hanson and Wang in~\cite{HW03}, where the bound of Conjecture~\ref{conj:MS} was proved for graphs with a dominating edge. The full statement of Conjecture~\ref{conj:MS} was later proved for this case in the papers~\cite{diam3,correction,Wang-arxiv} using a different method, but the proof is quite involved. As a side result, our new approach also gives a simpler and shorter proof of Conjecture~\ref{conj:MS} for graphs with a dominating edge.

We also study non-bipartite D2C graphs with maximum degree $n-2$ (note that the only D2C graphs with maximum degree $n-1$ are stars). It turns out that these graphs can be described precisely, and they all have a dominating edge.

We prove Theorem~\ref{thm:main} (and a stronger version for special cases) in Section~\ref{sec:main} and obtain, in passing, a shorter proof of Conjecture~\ref{conj:MS} for graphs with a dominating edge. We then characterize D2C graphs with maximum degree $n-2$ in Section~\ref{sec:highdegree}. Finally, we conclude in Section~\ref{sec:conclu}.

\section{Conjecture~\ref{conj:MS} and beyond for graphs with a dominating edge}\label{sec:main}

In this section, we further develop a proof technique of Hanson and Wang~\cite{HW03}, who showed the bound of Conjecture~\ref{conj:MS} for D2C graphs having a dominating edge. Their result was extended (using a different technique) by Haynes et al. and Wang (see the series of papers~\cite{diam3,correction,Wang-arxiv}) to prove the full conjecture for this class. Nevertheless, it turns out that when excluding bipartite graphs, the Murty-Simon bound is not tight for this class of graphs. To show this, we use the original idea of Hanson and Wang~\cite{HW03} and extend it by a finer analysis. To demonstrate the potential strength of this technique, we give, along the way, a shorter proof of Conjecture~\ref{conj:MS} for graphs with a dominating edge.

In order to have a smoother presentaion, we split this section into several parts. We start with establishing the general setting of the proof technique in Section~\ref{sec:prelim}. Then, we prove some general lemmas in Sections~\ref{sec:lemmas-Puvnonempty} and~\ref{sec:lemmas-Puvempty}. We use some of these lemmas to give our new proof of Conjecture~\ref{conj:MS} for graphs with a dominating edge in Section~\ref{sec:MSproof}. Then, in Section~\ref{sec:strong-theorem}, we prove a stronger bound than the one of Theorem~\ref{thm:main} for some special cases. Finally, in Section~\ref{sec:mainproof}, we conlclude the proof of Theorem~\ref{thm:main}.

\subsection{Preliminaries: notations and setting for the proofs}\label{sec:prelim}

Let us first fix our notation. Given a vertex $x$ from a graph $G$, we denote by $N(x)$ and $N[x]$ the open and closed neighbourhoods of $x$, respectively. An edge between vertices $x$ and $y$ is denoted $xy$, while a non-edge between $x$ and $y$ is denoted $\overline{xy}$. An \emph{oriented graph} is a graph where edges have been given an orientation; oriented edges are called \emph{arcs}. If $x$ is oriented towards $y$, we denote the arc from $x$ to $y$ by $\overrightarrow{xy}$. In an oriented graph, we denote by $N^+(x)$, $N^+[x]$, $N^-(x)$ and $N^-[x]$ the out-neighbourhood, closed out-neighbourhood, in-neighbourhood, and closed in-neighbourhood of vertex $x$. In an oriented graph, a \emph{directed cycle} is a cycle such that all arcs are oriented in the same cyclic direction. We say that a \emph{source} is a vertex $s$ with $N^-(s)=\emptyset$ and $N^+(s) \neq \emptyset$ while a \emph{sink} is a vertex $t$ with $N^+(t)=\emptyset$ and $N^-(t) \neq \emptyset$ (we consider that an isolated vertex is neither a source nor a sink). A triangle on an oriented graph is \emph{transitive} if it induces a subgraph with a source and a sink.

We start with the following definition, which is fundamental to our study.

\begin{definition}
	\label{def:criticalEdge}
	Let $G$ be a D2C graph. An edge $uv \in E(G)$ is \emph{critical} for a pair of vertices $\{x,y\}$ if the only path of length 1 or 2 from $x$ to $y$ uses the edge $uv$.
\end{definition}

The following observation is easy but important.

\begin{observation}\label{obs:critical}
  An edge $xy$ in a D2C graph is critical for a pair $\{x,z\}$ with $z\in N[y]\setminus \{x\}$ or $\{y,z\}$ with $z\in N[x]\setminus \{y\}$.
\end{observation}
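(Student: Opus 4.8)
The plan is to exploit the defining criticality of $G$ directly. Since $G$ is D2C and $xy\in E(G)$, deleting $xy$ must strictly increase the diameter beyond $2$; hence there is a pair $\{a,b\}$ of vertices with $d_{G-xy}(a,b)>2$ while $d_G(a,b)\le 2$. Consequently, \emph{every} path of length $1$ or $2$ between $a$ and $b$ in $G$ uses the edge $xy$, so by Definition~\ref{def:criticalEdge} the edge $xy$ is critical for $\{a,b\}$. The whole statement then reduces to checking that this witnessing pair $\{a,b\}$ has exactly the claimed shape.

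The key structural observation I would isolate first is that any path of length at most $2$ that passes through the edge $xy$ must have $x$ or $y$ as one of its two endpoints. Indeed, a length-$1$ path through $xy$ \emph{is} the edge $xy$, with endpoints $x$ and $y$; and in a length-$2$ path $a\text{--}c\text{--}b$, the edge $xy$ can only be the edge $ac$ or the edge $cb$ (it cannot be the pair $ab$, which is not an edge of the path), so $\{x,y\}$ always meets $\{a,c,b\}$ in such a way that $x$ or $y$ sits at an endpoint.

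With this in hand I would finish by a short case analysis on how $xy$ sits on the path. If the path is the edge $xy$ itself, then $\{a,b\}=\{x,y\}$, which we record as $\{x,z\}$ with $z=y$; here it is exactly the use of the \emph{closed} neighbourhood that matters, since $z=y\in N[y]\setminus\{x\}$. If instead the path has length $2$ and $xy$ is its first edge, say the path is $x\text{--}y\text{--}b$, then the pair is $\{x,b\}$ with $b\in N(y)\setminus\{x\}\subseteq N[y]\setminus\{x\}$; the symmetric subcases (path $y\text{--}x\text{--}b$, or $xy$ appearing as the second edge) give, by the same reasoning, a pair $\{y,z\}$ with $z\in N[x]\setminus\{y\}$ or a pair $\{x,z\}$ with $z\in N[y]\setminus\{x\}$. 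In every case the witnessing pair is of one of the two asserted forms.

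There is essentially no hard obstacle here; the only point requiring a moment's care is the degenerate length-$1$ case, where the witnessing pair is $\{x,y\}$ itself. This is precisely why the statement is phrased with the closed neighbourhoods $N[y]$ and $N[x]$ rather than the open ones: taking $z=y$ (respectively $z=x$) must be allowed in order to absorb the situation in which $x$ and $y$ have no common neighbour and the edge $xy$ is critical for its own endpoints.
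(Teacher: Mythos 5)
Your proof is correct. The paper states this observation without proof (introducing it only as ``easy but important''), and your argument --- extracting a witnessing pair $\{a,b\}$ whose distance in $G-xy$ exceeds $2$, then noting that any path of length at most $2$ through the edge $xy$ must have $x$ or $y$ as an endpoint, with the degenerate case $\{a,b\}=\{x,y\}$ absorbed by the closed neighbourhoods --- is precisely the routine verification the paper leaves implicit.
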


We are ready to describe the setting for the proofs of this section. Let $G(V,E)$ be a D2C graph with $n$ vertices and $m$ edges, and let $uv$ be a dominating edge of $G$. We split the other vertices of $G$ into four sets (see Figure~\ref{fig-structureD2CAreteDom} for an illustration):

	\begin{enumerate}
		\item $P_{uv}=\{x~|~uv \text{ is critical for the pair } \{x,v\} \text{ or } \{x,u\}\}$
		\item $S_{uv}=\{x~|~x \in N(u) \text{ and } x \in N(v)\}$
		\item $S_u=\{x~|~x \in N(u)\setminus(P_{uv} \cup S_{uv})\}$
		\item $S_v=\{x~|~x \in N(v)\setminus(P_{uv} \cup S_{uv})\}$
	\end{enumerate}

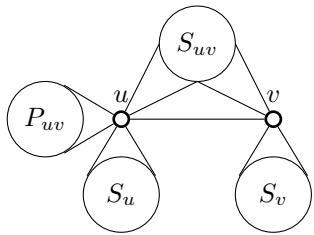
\begin{figure}[ht!]
	\centering
	\begin{tikzpicture}
	\node[jolinoeud](u) at (0,0) {};
	\node[jolinoeud](v) at (2,0) {};
	\draw (u)--(v);
	\draw (0,0.3) node {$u$};
	\draw (2,0.3) node {$v$};
	\draw[fill=white] (-1,0) circle (0.5);
	\draw (-1,0) node {$P_{uv}$};
	\draw (u)--(-0.75,0.45);
	\draw (u)--(-0.75,-0.45);
	\draw[fill=white] (1,1) circle (0.5);
	\draw (1,1) node {$S_{uv}$};
	\draw (u)--(0.5,1);
	\draw (u)--(1,0.5);
	\draw (v)--(1,0.5);
	\draw (v)--(1.5,1);
	\draw[fill=white] (0,-1) circle (0.5);
	\draw (0,-1) node {$S_u$};
	\draw (u)--(-0.45,-0.75);
	\draw (u)--(0.45,-0.75);
	\draw[fill=white] (2,-1) circle (0.5);
	\draw (2,-1) node {$S_v$};
	\draw (v)--(1.55,-0.75);
	\draw (v)--(2.45,-0.75);
	\end{tikzpicture}
	\caption{The structure of a D2C graph with the dominating edge $uv$ (the only edges that are depicted are those incident with $u$ or $v$). Lemma~\ref{clm:PuvSeesU} allows us to represent all vertices in $P_{uv}$ as adjacent to $u$.}
	\label{fig-structureD2CAreteDom}
\end{figure}

We have the following fact.

\begin{lemma}
		\label{clm:PuvSeesU}
Either $P_{uv}\cap N(u)=\emptyset$, or $P_{uv} \cap N(v) = \emptyset$ (or both).
	\end{lemma}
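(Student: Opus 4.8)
### Plan for the proof

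The plan is to prove the lemma by contradiction. Suppose that $P_{uv}$ has a vertex $a$ adjacent to $u$ and a vertex $b$ adjacent to $v$ (possibly $a=b$, but I will treat the generic case first). The key is to exploit the defining property of $P_{uv}$: for each vertex $x\in P_{uv}$, the edge $uv$ is \emph{critical} for the pair $\{x,v\}$ or $\{x,u\}$. By definition of criticality (Definition~\ref{def:criticalEdge}), this means the only path of length $1$ or $2$ between $x$ and the relevant endpoint of $uv$ passes through the edge $uv$ itself.

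First I would unpack what membership in $P_{uv}$ forces. Take $a\in P_{uv}\cap N(u)$. Since $a\in N(u)$, the pair $\{a,u\}$ is already joined by an edge not using $uv$, so $uv$ cannot be critical for $\{a,u\}$; hence $uv$ must be critical for $\{a,v\}$. The only length-$\le 2$ path from $a$ to $v$ is then $a$–$u$–$v$, which tells us in particular that $a$ is \emph{not} adjacent to $v$ and has no common neighbour with $v$ other than along this forced route. Symmetrically, for $b\in P_{uv}\cap N(v)$, the edge $uv$ is critical for $\{b,u\}$, so the only short path from $b$ to $u$ is $b$–$v$–$u$, forcing $b\not\in N(u)$ and constraining $b$'s common neighbours with $u$.

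The heart of the argument is then to examine the pair $\{a,b\}$ (when $a\neq b$). Since $G$ has diameter $2$, there must be a path of length $1$ or $2$ between $a$ and $b$. I would trace where such a path can go, using the neighbourhood restrictions just derived, and show that any such connection creates a \emph{second} short path between one of the critical pairs — e.g.\ it would yield an $a$-to-$v$ path of length $\le 2$ avoiding $uv$, contradicting the criticality of $uv$ for $\{a,v\}$ — thereby contradicting membership of $a$ or $b$ in $P_{uv}$. I expect the main obstacle to be the bookkeeping of the possible middle vertices of the $a$–$b$ path: one must check each case (the middle vertex lying in $S_{uv}$, $S_u$, $S_v$, or $P_{uv}$, or being $u$ or $v$ themselves) and verify it is incompatible with the two criticality conditions. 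The degenerate case $a=b$, where a single vertex of $P_{uv}$ would be adjacent to both $u$ and $v$, should be handled separately and is in fact immediate: such a vertex is then a common neighbour of $u$ and $v$, placing it in $S_{uv}$ rather than $P_{uv}$, so no vertex of $P_{uv}$ can be adjacent to both endpoints, and the remaining work is exactly the two-vertex case above.
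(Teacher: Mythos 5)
Your proposal is correct and follows essentially the same route as the paper's proof: assume both intersections are nonempty, use criticality of $uv$ for $\{a,v\}$ and $\{b,u\}$ to rule out the edge $ab$, invoke diameter $2$ to get a common neighbour, and then observe that this neighbour lies in $N(u)\cup N(v)$ (your case analysis over $S_{uv}$, $S_u$, $S_v$, $P_{uv}$ is just this domination property spelled out), yielding a second short path that contradicts criticality. Your separate treatment of the degenerate case $a=b$ is a minor point the paper leaves implicit, and it is handled correctly.
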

	\begin{proof}
		Let $x \in P_{uv} \cap N(u)$, and assume by contradiction that there is $y \in P_{uv}\cap N(v)$. We have $xy \not\in E$ since $uv$ is critical for the pairs $(u,y)$ and $(v,x)$. Thus, $x$ and $y$ have a common neighbour $z$. However, since $uv$ is a dominating edge, $z \in N(u) \cup N(v)$. Suppose without loss of generality that $z \in N(u)$, then there are two paths of length~$2$ between $u$ and $y$: one going through $v$ and one going through $z$. Thus, the edge $uv$ is not critical for the pair $\{u,y\}$, a contradiction.
	\end{proof}

Because of Lemma~\ref{clm:PuvSeesU}, in the whole section, without loss of generality, we will always assume that $P_{uv} \cap N(v) = \emptyset$. We next prove the following lemma.

\begin{lemma}
  The following properties hold.
  \begin{itemize}
  \item[(a)] There is no edge between $P_{uv}$ and $N(v) \setminus \{u\}$.\label{clm:Puv-Sv}
  \item[(b)] If $P_{uv}=\emptyset$, then $S_{uv}=\emptyset$.\label{clm:PuvEmptySuvEmpty}
  \item[(c)] If $S_{uv}=\emptyset$, then every vertex in $S_u$ (resp. $S_v$) has a neighbour in $S_v$ (resp. $S_u$).\label{clm:everySuHasNeighbInSv}
  \item[(d)] If $P_{uv}=\emptyset$, then every vertex in $S_u$ (resp. $S_v$) that has at least one neighbour in $S_u$ (resp. $S_v$) has a non-neighbour in $S_v$ (resp. $S_u$).\label{clm:everySuHasNonNeighbInSv}
  \end{itemize}
\end{lemma}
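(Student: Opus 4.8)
The plan is to prove the four parts in the given order, since the later ones build on the earlier ones, and to use throughout two basic tools: the diameter-$2$ property (any two non-adjacent vertices have a common neighbour) and the fact that in a D2C graph every edge is critical for some pair, combined with Observation~\ref{obs:critical}. Recall the standing assumption $P_{uv}\cap N(v)=\emptyset$: this means that every $x\in P_{uv}$ lies in $N(u)\setminus N(v)$, and that $uv$ is critical for $\{x,v\}$, so $x-u-v$ is the unique path of length at most~$2$ from $x$ to $v$. Part~(a) is then immediate from criticality: if some $x\in P_{uv}$ had a neighbour $w\in N(v)\setminus\{u\}$, the path $x-w-v$ would be a second path of length at most~$2$ from $x$ to $v$ avoiding $uv$, a contradiction.

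For part~(b), I would use that $uv$ is itself a critical edge; by Observation~\ref{obs:critical} the pair it is critical for is $\{u,z\}$ with $z\in N[v]\setminus\{u\}$ or $\{v,z\}$ with $z\in N[u]\setminus\{v\}$. If $z\notin\{u,v\}$, then $z\in P_{uv}$ by the definition of $P_{uv}$, which is excluded; hence $uv$ is critical for $\{u,v\}$, so $u$ and $v$ have no common neighbour and $S_{uv}=\emptyset$. For part~(c), take $x\in S_u$; then $x\notin N(v)$ and $x\notin P_{uv}$, so the non-adjacent pair $\{x,v\}$ has (by diameter~$2$) a common neighbour, and $uv$ is \emph{not} critical for $\{x,v\}$, yielding a length-$2$ path $x-w-v$ with $w\neq u$. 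Since $w\in N(v)\setminus\{u\}$ and $S_{uv}=\emptyset$, the standing assumption gives $w\notin P_{uv}$, so $w\in S_v$ is the desired neighbour. The symmetric claim for $x\in S_v$ proceeds identically with the pair $\{x,u\}$, the one difference being that to place the witness $w\in N(u)\setminus\{v\}$ in $S_u$ I must exclude $w\in P_{uv}$; this is precisely where part~(a) is invoked, as $w$ is then adjacent to $x\in N(v)\setminus\{u\}$.

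The real obstacle is part~(d). Here $P_{uv}=\emptyset$, so by part~(b) also $S_{uv}=\emptyset$, and the vertex set partitions as $\{u,v\}\cup S_u\cup S_v$ in a way that is now fully symmetric in $u$ and $v$; in particular $S_v\neq\emptyset$ by part~(c). Fix $x\in S_u$ with a neighbour $x'\in S_u$, and suppose for contradiction that $x$ is adjacent to \emph{every} vertex of $S_v$. The key idea is to attack the edge $ux$ and show it is critical for no pair, contradicting that $G$ is D2C. This choice of edge is the crux: the hypothesis ``$x$ dominates $S_v$'' is exactly what allows every potential short path through $ux$ to be rerouted.

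To carry this out I would apply Observation~\ref{obs:critical} to $ux$ and eliminate each possible critical pair. The pair $\{u,x\}$ is impossible since $x'$ is a common neighbour of $u$ and $x$. For a pair $\{u,z\}$ with $z\in N(x)\setminus\{u\}$, note $z\in S_u\cup S_v$; if $z\in S_u$ the edge $uz$ avoids $ux$, and if $z\in S_v$ the path $u-v-z$ avoids $ux$. For a pair $\{x,z\}$ with $z\in N(u)\setminus\{x\}$, so $z\in\{v\}\cup S_u$: if $z=v$ then $x-y-v$ for any $y\in S_v$ avoids $ux$; and the only delicate case, $z\in S_u\setminus\{x\}$ with $x\not\sim z$, is settled using part~(c), since such a $z$ has a neighbour $w\in S_v$ which $x$ also sees by assumption, so $x-w-z$ avoids $ux$. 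Having excluded all cases, $ux$ is not critical, the desired contradiction; the symmetric statement for $S_v$ follows from the $u\leftrightarrow v$ symmetry.
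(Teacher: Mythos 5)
Your proof is correct and follows essentially the same route as the paper's: (a) via a second $x$--$v$ path contradicting criticality of $uv$, (b) by forcing $uv$ to be critical only for $\{u,v\}$, (c) by forcing the common neighbour of $x$ and $v$ (resp.\ $u$) into $S_v$ (resp.\ $S_u$), and (d) by showing the edge $ux$ cannot be critical for any pair permitted by Observation~\ref{obs:critical}. If anything, your case analysis is slightly more complete than the paper's: you explicitly dispose of the pair $\{x,v\}$ in (d) and explicitly invoke part~(a) to handle the $S_v$-side of (c), both of which the paper leaves implicit.
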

\begin{proof}(a) Let $x\in P_{uv}$ and assume by contradiction that there is $y \in N(v) \setminus \{u\}$ such that $xy \in E$. Then there are two paths of length 2 between $x$ and $v$: one going through $u$ and one going through $y$. Thus, the edge $uv$ is not critical for the pair $\{v,x\}$, a contradiction.

  (b) If $P_{uv}=\emptyset$, then the edge $uv$ can only be critical for the pair $\{u,v\}$. This implies that $u$ and $v$ have no common neighbour, that is, $S_{uv}=\emptyset$.

  (c) Assume by contradiction that there is a vertex $x \in S_u$ (without loss of generality) such that $N(x) \cap S_v = \emptyset$. Then $N(x) \cap N(v) = \{u\}$ since $S_{uv} = \emptyset$. This implies that the edge $uv$ is critical for the pair $\{x,v\}$, and thus $x \in P_{uv}$, a contradiction.

(d) Assume by contradiction that there is a vertex $x \in S_u$ (without loss of generality) such that $S_v \subset N(x)$. Then, the edge $ux$ is not critical. Indeed, it cannot be critical for the pair $\{u,x\}$ since $x$ has a neighbour in $S_u$. It cannot be critical for a pair $\{x,y\}$ with $y \in S_u$: since $P_{uv}=\emptyset$ we have $S_{uv}=\emptyset$ by (b), and by (c), $y$ has a neighbour in $S_v$. So, there is a path of length~2 from $x$ to $y$ going through $S_v$. Finally, it cannot be critical for a pair $\{u,y\}$ with $y \in N(x)$ since every neighbour of $x$ is either in $S_u$ (thus, a neighbour of $u$) or in $S_v$ (and a neighbour of $v$). Observation~\ref{obs:critical} ensures that we considered all the cases, and reached a contradiction which proves the claim.
\end{proof}

Following the proof of Hanson and Wang~\cite{HW03}, we will next partition the vertices of $G$ into two parts $X$ and $Y$, and prove that every edge within $X$ or within $Y$ can be assigned injectively to a non-edge between $X$ and $Y$. This will prove that $G$ has at most as many edges as the complete bipartite graph with parts $X$ and $Y$.
	
	We define the partition as follows:
	\begin{enumerate}
		\item $X := \{v\} \cup S_u \cup P_{uv} \cup S_{uv}$
		\item $Y := \{u\} \cup S_v$
	\end{enumerate}

	\begin{lemma}
		\label{clm:assignment}
		For every edge $ab \in E(X)$ (resp. $E(Y)$), there exists $c \in Y$ (resp. $X$) such that $ab$ is critical for either the pair $\{a,c\}$ or the pair $\{b,c\}$.
	\end{lemma}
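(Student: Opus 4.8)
The plan is to exploit the fact that one part of the bipartition contains a vertex adjacent to \emph{every} vertex of the \emph{other} part, and to combine this with the criticality of an edge via Observation~\ref{obs:critical}. First I would record the structural fact that $u$ is adjacent to every vertex of $X=\{v\}\cup S_u\cup P_{uv}\cup S_{uv}$: indeed $u\sim v$, while $S_u,S_{uv}\subseteq N(u)$ by definition, and $P_{uv}\subseteq N(u)$ because each vertex of $P_{uv}$ is dominated by the edge $uv$ yet lies outside $N(v)$ under our standing assumption $P_{uv}\cap N(v)=\emptyset$. Symmetrically, $v$ is adjacent to every vertex of $Y=\{u\}\cup S_v$, since $v\sim u$ and $S_v\subseteq N(v)$ by definition.

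Next I would treat an edge $ab\in E(X)$. Since $G$ is D2C, deleting $ab$ increases the diameter, so $ab$ is critical for some pair; by Observation~\ref{obs:critical} this pair is $\{a,c\}$ with $c\in N[b]\setminus\{a\}$ or $\{b,c\}$ with $c\in N[a]\setminus\{b\}$, and by the symmetry of $a$ and $b$ I may assume the former. As $a,b\in X$, the vertex $u\in Y$ satisfies $u\notin\{a,b\}$ and is adjacent to both $a$ and $b$. This rules out $c=b$: there is then a length-$2$ path from $a$ to $b$ through $u$ avoiding $ab$, so $ab$ cannot be critical for $\{a,b\}$. Hence $c\in N(b)\setminus\{a\}$. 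It remains to show $c\in Y$: if instead $c\in X$, then $u$ is adjacent to $c$ as well, so there is a length-$2$ path from $a$ to $c$ through $u$ not using $ab$ (here $u\neq a,b,c$, since $u\in Y$ while $a,b,c\in X$), contradicting that $ab$ is critical for $\{a,c\}$. Thus $c\in Y$, as required.

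The case $ab\in E(Y)$ is the mirror image, using the vertex $v\in X$, which is adjacent to every vertex of $Y$, to exclude both $c=b$ and $c\in Y$. I do not expect a genuine obstacle: the entire argument hinges on the two ``universal-to-the-other-part'' vertices $u$ and $v$, and the only points needing care are verifying $P_{uv}\subseteq N(u)$ from Lemma~\ref{clm:PuvSeesU} (together with the domination hypothesis) and disposing of the degenerate subcase $c=b$. In particular, no case analysis over the internal adjacencies of $S_u$, $P_{uv}$, $S_{uv}$, and $S_v$ is required.
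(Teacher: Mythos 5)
Your proof is correct and takes essentially the same approach as the paper's: both arguments rest on the observation that $u$ (resp.\ $v$) is adjacent to every vertex of $X$ (resp.\ $Y$), which first rules out criticality for the pair $\{a,b\}$ and then forces the third vertex $c$ of the critical pair into the opposite part. The paper's proof is identical in substance, just terser (it leaves the verification that $P_{uv}\subseteq N(u)$ implicit, which you spell out correctly).
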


	\begin{proof}
		Assume without loss of generality that $a,b \in X$. This implies that both $a$ and $b$ are neighbours of $u$. Then, the edge $ab$ cannot be critical for the pair $\{a,b\}$.  Without loss of generality, we assume $ab$ it is critical for $\{b,c\}$, where $c \in N(a) \setminus N(b)$. However, if $c \in X$, then $b$ and $c$ are neighbours of $u$, and then the edge $ab$ is not critical for this pair.
		Hence, $c \in Y$.
	\end{proof}

	We use Lemma~\ref{clm:assignment} to define a function $f$ assigning the edges of $E(X)$ and $E(Y)$ to non-edges between $X$ and $Y$, as follows.
	For every edge $e \in E(X)$ (resp. $E(Y)$), we select one vertex $c \in Y$ (resp. $X$) such that $e$ is critical for the pair $\{b,c\}$, where $b \in e$ (such vertex $c$ exists by Lemma~\ref{clm:assignment}). We let $f(e)=\overline{bc}$. Note that $f$ is well-defined, since $e$ is critical for the pair $\{b,c\}$ and thus $bc \not\in E$. This construction is depicted in Figure~\ref{fig-functionF}.
	
	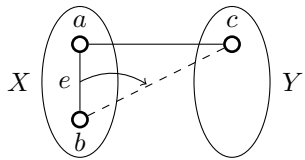
\begin{figure}[ht!]
		\centering
		\begin{tikzpicture}
		\node[jolinoeud](y) at (0,0) {};
		\node[jolinoeud](z) at (0,1) {};
		\node[jolinoeud](x) at (2,1) {};
		\draw (0,0.5) ellipse (0.5 and 1);
		\draw (2,0.5) ellipse (0.5 and 1);
		\draw (-0.2,0.5) node {$e$};
		\draw (0,-0.3) node {$b$};
		\draw (0,1.3) node {$a$};
		\draw (2,1.3) node {$c$};
		\draw[dashed] (x)--node[midway,below](0){}(y);
		\draw (x)--(z);
		\draw (y)--(z);
		\draw (-0.8,0.5) node {$X$};
		\draw (2.8,0.5) node {$Y$};
		\draw[->,bend left] (0,0.5)to(0);
		\end{tikzpicture}
		\caption{The construction of the function $f$. The edge $ e = ab$ is critical for the pair $\{b,c\}$, thus $f(e)=\overline{bc}$.}
		\label{fig-functionF}
	\end{figure}
	
	\begin{lemma}\label{clm:assignIsInj}
		The function $f$ is injective.
	\end{lemma}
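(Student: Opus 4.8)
The plan is to exploit the uniqueness of the length-$2$ connection that criticality forces. The key preliminary step I would establish is the following: whenever $f(e)=\overline{bc}$ with $e=ab$, the vertex $a$ is the \emph{unique} common neighbour of $b$ and $c$. To see this, note that $bc\notin E$ by construction of $f$, so every path of length at most $2$ from $b$ to $c$ has length exactly $2$, i.e.\ is of the form $b-w-c$ with $w$ a common neighbour of $b$ and $c$. Such a path uses the edge $e=ab$ (which is incident to $b$ but not to $c$) precisely when $w=a$. Since $e$ is critical for $\{b,c\}$, \emph{every} such path must use $e$, so every common neighbour $w$ equals $a$; and as $G$ has diameter $2$, at least one common neighbour exists. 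Hence $a$ is uniquely determined by the non-edge $\overline{bc}$.

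Next I would take two edges $e_1,e_2$ with $f(e_1)=f(e_2)=\overline{bc}$, labelling the endpoints so that $b\in X$ and $c\in Y$ (recall that the image of $f$ is always a non-edge between $X$ and $Y$). For each $i$, the endpoint $b_i$ of $e_i$ selected by the construction lies in the same part as $e_i$, while the opposite endpoint $a_i$ of $e_i$ is, by the preliminary step, the unique common neighbour of $b$ and $c$; thus $a_1=a_2=:a$. If $e_1,e_2\in E(X)$, then $b_1=b_2=b$ and so $e_1=ab=e_2$; symmetrically, if both lie in $E(Y)$, then $b_1=b_2=c$ and again $e_1=e_2$.

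The step I expect to require the most care is ruling out the mixed case $e_1\in E(X)$, $e_2\in E(Y)$, since here uniqueness of the common neighbour alone does not finish the argument and must be combined with the bipartition. In this situation $a=a_1\in X$ (because $e_1=ab$ is an edge inside $X$) but also $a=a_2\in Y$ (because $e_2=ac$ is an edge inside $Y$), contradicting $X\cap Y=\emptyset$. Hence this case cannot occur, and $f$ is injective. I would write out this bipartition argument explicitly rather than leave it implicit, as it is the one place where the two-sided nature of the construction — edges of $E(X)$ being mapped to non-edges ``from the $X$-side'' and edges of $E(Y)$ ``from the $Y$-side'' — genuinely matters.
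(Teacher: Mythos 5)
Your proposal is correct and is essentially the same argument as the paper's: both exploit that criticality of an edge for the pair $\{b,c\}$, together with $bc\notin E$, forces that edge to lie on the unique length-$2$ path $b\,\text{--}\,w\,\text{--}\,c$, and then use the partition $X,Y$ to rule out two distinct preimages. The paper phrases this in one step (two distinct critical edges would have to form that path, so one of them would cross between $X$ and $Y$), while you unfold the same mechanism into a unique-common-neighbour lemma plus an explicit three-case analysis; the content is identical.
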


	\begin{proof}
		Assume by contradiction that $f$ is not injective. Without loss of generality, let $\overline{bc}$ be the non-edge between $X$ and $Y$ such that there are two edges $e$ and $e'$ in $E(X) \cup E(Y)$ verifying $f(e)=f(e')=\overline{bc}$.
		
		By definition of $f$, both $e$ and $e'$ are critical for the pair $\{b,c\}$. This implies that $e$ and $e'$ form the unique path of length~$2$ from $b$ to $c$. Thus, one of $e$ or $e'$ is in $X \times Y$, a contradiction.
	\end{proof}

        We saw in Lemma~\ref{clm:assignIsInj} that $f$ is injective. Moreover, we will show later that, if $G$ is not bipartite, then $f$ is not surjective. We call any non-edge in $X\times Y$ that has no preimage by $f$, an \emph{$f$-free non-edge}. We also let $\free(f)$ be the number of $f$-free non-edges.


\begin{lemma}\label{clm:MSBound}
  We have $m=\left\lfloor\frac{n^2-||X|-|Y||^2}{4}\right\rfloor-\free(f)\leq \left\lfloor n^2/4 \right\rfloor-\free(f)$.
\end{lemma}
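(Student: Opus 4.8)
The plan is to count the edges of $G$ by splitting them according to the partition $\{X,Y\}$, and to exploit the injective map $f$ from Lemma~\ref{clm:assignIsInj} to trade the internal edges for crossing non-edges. Writing $m_{XY}$ for the number of edges of $G$ with one endpoint in $X$ and the other in $Y$, I would begin from the decomposition $m=|E(X)|+|E(Y)|+m_{XY}$. The complete bipartite graph on parts $X$ and $Y$ has exactly $|X|\cdot|Y|$ crossing pairs, so the number of non-edges of $G$ between $X$ and $Y$ equals $|X|\cdot|Y|-m_{XY}$.

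Next I would invoke the injectivity of $f$. Since $f$ sends the $|E(X)|+|E(Y)|$ internal edges injectively into the crossing non-edges, its image consists of exactly $|E(X)|+|E(Y)|$ such non-edges, and by definition the remaining crossing non-edges are precisely the $\free(f)$ many $f$-free ones. Hence
\[
|X|\cdot|Y| - m_{XY} = |E(X)| + |E(Y)| + \free(f).
\]
Substituting $|E(X)|+|E(Y)|=|X|\cdot|Y|-m_{XY}-\free(f)$ back into the decomposition of $m$ makes the $m_{XY}$ terms cancel, leaving the clean identity $m=|X|\cdot|Y|-\free(f)$.

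It then remains to rewrite $|X|\cdot|Y|$ in the stated form. Using $|X|+|Y|=n$ together with the elementary identity $4|X||Y|=(|X|+|Y|)^2-(|X|-|Y|)^2=n^2-\big||X|-|Y|\big|^2$, I obtain $|X||Y|=\frac{n^2-||X|-|Y||^2}{4}$. I would note that $|X|-|Y|$ and $n$ have the same parity, so $n^2-||X|-|Y||^2$ is divisible by $4$ and this quantity is already an integer, which is why the floor in the statement is harmless. Finally, since $||X|-|Y||^2\geq 0$ in general and, by the same parity remark, $||X|-|Y||^2\geq 1$ whenever $n$ is odd, a short case check in both parities gives $\frac{n^2-||X|-|Y||^2}{4}\leq\lfloor n^2/4\rfloor$, which yields the claimed inequality.

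The argument is essentially pure bookkeeping, so there is no deep obstacle; the only points demanding care are (i) the accounting for the image of $f$, where injectivity is used to guarantee that the image has size exactly $|E(X)|+|E(Y)|$ and that its complement among the crossing non-edges is exactly $\free(f)$, and (ii) the parity observation that simultaneously justifies the floor and closes the final inequality without loss.
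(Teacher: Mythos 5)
Your proof is correct and follows essentially the same route as the paper's: both derive the key identity $m=|X|\,|Y|-\free(f)$ from the injectivity of $f$ and then rewrite $|X|\,|Y|$ via the difference of the part sizes. The only cosmetic differences are that you justify integrality by a parity argument (the paper simply notes that $m+\free(f)$ is an integer) and you make the crossing-edge bookkeeping via $m_{XY}$ explicit, neither of which changes the substance.
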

\begin{proof}
  By the injectivity of $f$ (Lemma~\ref{clm:assignIsInj}) and the definition of $\free(f)$, there are exactly $\free(f)$ more non-edges between $X$ and $Y$ than edges inside $X$ and inside $Y$. Thus, $G$ has exactly $|X||Y|-\free(f)$ edges.
  
Without loss of generality, we assume that $|X|\leq |Y|$ and we pose $\Delta=||X|-|Y||$. By the above paragraph, we have $m=|X||Y|-\free(f)=|X|(n-|X|)-\free(f)$.
Since $|X|+\Delta=|Y|=n-|X|$, this implies that $|X|=\frac{n-\Delta}{2}$. In particular, we now have:
	        
		\begin{eqnarray*}
		m & = & |X|(n-|X|)-\free(f) \\
		& = & \frac{n-\Delta}{2} \left(n-\frac{n-\Delta}{2}\right)-\free(f) \\
		& = & \frac{(n-\Delta)(n+\Delta)}{4}-\free(f) \\
		& = & \frac{n^2-\Delta^2}{4}-\free(f)
		\end{eqnarray*}

Because $m$ and $\free(f)$ are integers, we have $m=\left\lfloor\frac{n^2-\Delta^2}{4}\right\rfloor-\free(f)$. Moreover, since $\Delta\geq 0$, we obtain that $m\leq\left\lfloor n^2/4 \right\rfloor-\free(f)$.
\end{proof}

Lemma~\ref{clm:MSBound} implies that $G$ has at most $\left\lfloor n^2/4 \right\rfloor$ edges: this is the result of Hanson and Wang~\cite{HW03}. It will require some more effort to prove the whole Conjecture~\ref{conj:MS}: our aim will be to show that $\free(f)$ has at least a certain size. First, we will prove some general lemmas.

\subsection{Preliminaries for the case $P_{uv}\neq\emptyset$}\label{sec:lemmas-Puvnonempty}

We now prove a useful lemma about $P_{uv}$, which is illustrated in Figure~\ref{fig-PuvEstRelou}.

\begin{lemma}\label{clm:P_{uv}}\label{eq:Puv}
  Let $p$ be a vertex in $P_{uv}$, and let $S_v(p)$ be the set of vertices $x \in S_v$ such that the non-edge $\overline{px}$ is not $f$-free. Then, for each vertex $x\in S_v(p)$, there is a vertex $m(x)$ in $S_u \cap N(p)$ such that $f(pm(x))=\overline{px}$. Denote by $S_u(p)$ the set of vertices $y$ of $S_u$ such that $y=m(x)$ for some vertex $x$ of $S_v(p)$. Then, the following holds.
  \begin{itemize}
  \item[(a)] We have $|S_u(p)|=|S_v(p)|$ (that is, $m$ is injective).
  \item[(b)] The only edges in $S_u(p)\times S_v(p)$ are those of the form $xm(x)$.
  \item[(c)] For any two vertices $x,y$ of $S_v(p)$, if one of the edges $xy$ or $m(x)m(y)$ exists, then one of the non-edges $\overline{xm(y)}$ and $\overline{ym(x)}$ is $f$-free. If both edges $xy$ and $m(x)m(y)$ exist, then both non-edges $\overline{xm(y)}$ and $\overline{ym(x)}$ are $f$-free.
    \item[(d)] We have $|S_u|\geq |S_v|-\free(f)$.
  \end{itemize}
\end{lemma}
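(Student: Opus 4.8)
The plan is to first pin down the exact structure of a preimage of a non-edge $\overline{px}$ with $x\in S_v$, and then read off all four statements from it.

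\textbf{Preliminary step (the backbone of everything).} By Lemma~\ref{clm:PuvSeesU} and our standing assumption $P_{uv}\cap N(v)=\emptyset$, we have $p\in N(u)$; since $pu\in E$, the edge $uv$ cannot be critical for $\{p,u\}$, hence it is critical for $\{p,v\}$, so the unique path of length $\le 2$ from $p$ to $v$ is $p-u-v$. Now take $x\in S_v(p)$, so $\overline{px}$ has a preimage $e$ (and $\overline{px}$ is a genuine non-edge, as there is no edge between $P_{uv}$ and $N(v)\setminus\{u\}$). I will argue that $e$ is an edge $p\,m$ with $m\in S_u\cap N(p)\cap N(x)$, critical for $\{p,x\}$. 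First, $e$ cannot lie in $E(Y)$: that would force its critical partner to be a neighbour of $p$ inside $\{u\}\cup S_v$, which is impossible since $u\notin N(x)$ and since there is no edge between $P_{uv}$ and $S_v$. Thus $e\in E(X)$, and matching $f(e)=\overline{px}$ forces $p\in e$; write $e=p\,m$ with $m\in X$. Criticality for $\{p,x\}$ makes $p-m-x$ the unique length-$2$ path, so $m\in N(p)\cap N(x)$. Finally I locate $m$ in $X$: $m\neq v$ since $p\notin N(v)$; $m\notin P_{uv}$ since $m\in N(x)$ would contradict the no-edge property between $P_{uv}$ and $N(v)\setminus\{u\}$; and $m\notin S_{uv}$, because $m\in S_{uv}\cap N(p)$ would give a second length-$2$ path $p-m-v$, contradicting the criticality of $uv$ for $\{p,v\}$. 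Hence $m\in S_u$, and I set $m(x):=m$.

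\textbf{Parts (a)--(b).} Statement (a) is immediate: if $m(x)=m(x')$ then $f(p\,m(x))$ equals both $\overline{px}$ and $\overline{px'}$, forcing $x=x'$; so $m$ is injective and $|S_u(p)|=|S_v(p)|$. For (b), take any edge with endpoints $y\in S_u(p)$ and $x\in S_v(p)$. As $y\in N(p)$, the edges $py$ and $yx$ give the length-$2$ path $p-y-x$; but $p\,m(x)$ is critical for $\{p,x\}$, whose unique length-$2$ path is $p-m(x)-x$, so $y=m(x)$. Thus the only such edges are the matching edges $x\,m(x)$.

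\textbf{Part (c).} The crucial point is that the matching edges $x\,m(x)$ and $y\,m(y)$ join $X$ to $Y$ and therefore do \emph{not} lie in the domain $E(X)\cup E(Y)$ of $f$. By (b), $\overline{x\,m(y)}$ and $\overline{y\,m(x)}$ are genuine non-edges. If both $xy$ and $m(x)m(y)$ exist, then $\{x,m(y)\}$ is joined by the two distinct length-$2$ paths $x-y-m(y)$ and $x-m(x)-m(y)$, so no edge can be critical for it and $\overline{x\,m(y)}$ is $f$-free; symmetrically $\overline{y\,m(x)}$ is $f$-free. If exactly one of the two edges exists --- say $xy$, the other case being symmetric --- then the only length-$2$ path from $x$ to $m(y)$ is $x-y-m(y)$ and the only one from $y$ to $m(x)$ is $y-x-m(x)$; in both paths the sole edge lying in the domain of $f$ is $xy$ (the other being a matching/cross edge). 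Hence if neither $\overline{x\,m(y)}$ nor $\overline{y\,m(x)}$ were $f$-free, both would have $xy$ as preimage, contradicting that $f$ is a function. So at least one of them is $f$-free.

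\textbf{Part (d).} Using (a): for every $x\in S_v\setminus S_v(p)$ the non-edge $\overline{px}$ is $f$-free by definition of $S_v(p)$, and distinct $x$ yield distinct $f$-free non-edges, so $|S_v\setminus S_v(p)|\le\free(f)$. Combining with $|S_v(p)|=|S_u(p)|\le|S_u|$ gives $|S_v|\le|S_u|+\free(f)$, i.e. $|S_u|\ge|S_v|-\free(f)$. The main obstacle is the preliminary step together with the ``exactly one edge'' case of (c): the former because one must carefully exclude $m\in S_{uv}$ via the hidden second path $p-m-v$, and the latter because it hinges on the subtle observation that cross edges lie outside the domain of $f$, so a single within-part edge cannot witness both diagonal non-edges at once.
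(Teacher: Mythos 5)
Your proof is correct and follows essentially the same route as the paper's: you pin down the preimage of each non-edge $\overline{px}$ as an edge $pm(x)$ with $m(x)\in S_u\cap N(p)\cap N(x)$ (the paper gets this slightly more quickly from Lemma~\ref{clm:Puv-Sv}(a), you locate the preimage by direct case analysis, which is fine), and then (a)--(d) follow, as in the paper, from the single-valuedness of $f$, the criticality of preimages, and the fact that edges between $X$ and $Y$ lie outside the domain of $f$. The only imprecision is in part (c): in the case where only $xy$ exists you assert that $x-y-m(y)$ is the \emph{only} length-$2$ path from $x$ to $m(y)$, which is not justified (a common neighbour in $S_{uv}$, $S_u\setminus S_u(p)$ or $S_v\setminus S_v(p)$ is nowhere excluded); this is harmless, however, because if a second such path exists then no edge is critical for $\{x,m(y)\}$ and the non-edge is $f$-free outright, while otherwise any preimage must be critical for the pair and hence lie on the path $x-y-m(y)$, forcing it to be $xy$ exactly as you conclude.
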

\begin{proof}
  Let $x\in S_v(p)$. By Lemma~\ref{clm:Puv-Sv}(a), $p$ has no neighbour neither in $S_v$ nor in $S_{uv}$. Thus, $p$ and $x$ have a common neighbour, $q$, in $S_u$, and $f(pq)=\overline{px}$. We let $q=m(x)$. Now, if for some pair $x,y$ of distinct vertices of $S_v(p)$, we had $m(x)=m(y)$, then one of the non-edges $\overline{px}$ and $\overline{py}$ would be $f$-free (since both can only be assigned to $pq$ by $f$), a contradicton. Thus, $|S_u(p)|=|S_v(p)|$ and (a) is true.

  Moreover, there is no edge $xm(y)$ for two distinct vertices $x,y$ in $S_v(p)$, since otherwise $p$ and $x$ would have two common neighbours ($m(x)$ and $m(y)$), contradicting the fact that $f(pm(x))=\overline{px}$. Thus, (b)~holds.

  Finally, assume that there is an edge $xy$ in $S_v(p)$ (the proof is the same for the edge $m(x)m(y)$). Then, both non-edges $\overline{xm(y)}$ and $\overline{ym(x)}$ can only be assigned to the edge $xy$, so one of them is $f$-free. If we have both edges $xy$ and $m(x)m(y)$, then both endpoints of $\overline{xm(y)}$ and $\overline{ym(x)}$ have two common neighbours, so both are $f$-free, and (c)~is true.

To prove~(d), we let $\free(P_{uv},f)$ be the number of $f$-free non-edges incident with a vertex of $P_{uv}$. Let $p$ be some vertex $p$ of $P_{uv}$. By the previous parts of the lemma, we have:

\begin{equation*}
|S_u|\geq|S_u(p)|=|S_v(p)|\geq |S_v|-\free(P_{uv},f)\geq |S_v|-\free(f),
\end{equation*}
which completes the proof of (d).
\end{proof}

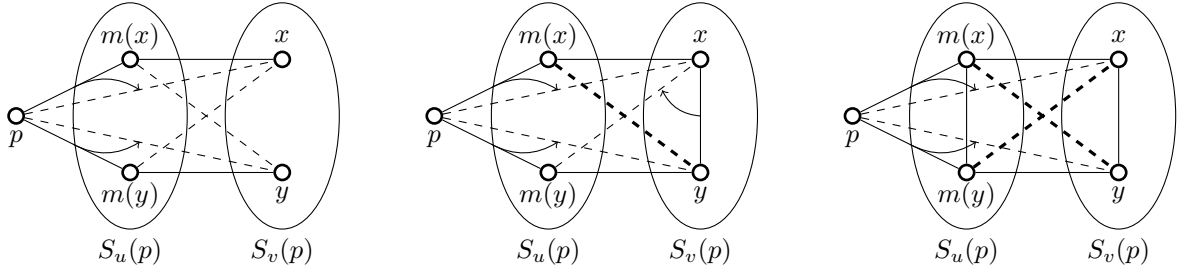
\begin{figure}[ht!]
	\centering
	\begin{tikzpicture}	
	\node (puv) at (0,0) {
		\begin{tikzpicture}
		\draw (0,0) ellipse (0.75 and 1.5);
		\draw (2,0) ellipse (0.75 and 1.5);
		\node[jolinoeud] (0) at (0,-0.75) {};
		\node[jolinoeud] (1) at (0,0.75) {};
		\node[jolinoeud] (2) at (2,-0.75) {};
		\node[jolinoeud] (3) at (2,0.75) {};
		\draw (0,-1.05) node {$m(y)$};
		\draw (0,1.05) node {$m(x)$};
		\draw (2,1.05) node {$x$};
		\draw (2,-1.05) node {$y$};
		\node[jolinoeud] (p) at (-1.5,0) {};
		\draw (-1.5,-0.3) node {$p$};
		\draw (p)--(0);
		\draw (p)--(1);
		\draw (2)--(0);
		\draw (3)--(1);
		\draw[dashed](p)--node[midway,above](m1){}(2);
		\draw[dashed](p)--node[midway,below](m2){}(3);
		\draw[->,bend left] (-0.75,0.375)to(m2);
		\draw[->,bend right] (-0.75,-0.375)to(m1);
		\draw[dashed] (0)--(3);
		\draw[dashed] (1)--(2);
		\draw (0,-1.8) node {$S_u(p)$};
		\draw (2,-1.8) node {$S_v(p)$};
		\end{tikzpicture}
	};
	
	\node (puv2) at (5.5,0) {
		\begin{tikzpicture}
		\draw (0,0) ellipse (0.75 and 1.5);
		\draw (2,0) ellipse (0.75 and 1.5);
		\node[jolinoeud] (0) at (0,-0.75) {};
		\node[jolinoeud] (1) at (0,0.75) {};
		\node[jolinoeud] (2) at (2,-0.75) {};
		\node[jolinoeud] (3) at (2,0.75) {};
		\draw (0,-1.05) node {$m(y)$};
		\draw (0,1.05) node {$m(x)$};
		\draw (2,1.05) node {$x$};
		\draw (2,-1.05) node {$y$};
		\node[jolinoeud] (p) at (-1.5,0) {};
		\draw (-1.5,-0.3) node {$p$};
		\draw (p)--(0);
		\draw (p)--(1);
		\draw (2)--(0);
		\draw (3)--(1);
		\draw (2)--(3);
		\draw[dashed](p)--node[midway,above](m1){}(2);
		\draw[dashed](p)--node[midway,below](m2){}(3);
		\draw[->,bend left] (-0.75,0.375)to(m2);
		\draw[->,bend right] (-0.75,-0.375)to(m1);
		\draw[dashed] (0)--node[near end,above](m3){}(3);
		\draw[dashed,very thick] (1)--(2);
		\draw[->,bend left] (2,0)to(m3);
		\draw (0,-1.8) node {$S_u(p)$};
		\draw (2,-1.8) node {$S_v(p)$};
		\end{tikzpicture}
	};
	
	\node (puv3) at (11,0) {
		\begin{tikzpicture}
		\draw (0,0) ellipse (0.75 and 1.5);
		\draw (2,0) ellipse (0.75 and 1.5);
		\node[jolinoeud] (0) at (0,-0.75) {};
		\node[jolinoeud] (1) at (0,0.75) {};
		\node[jolinoeud] (2) at (2,-0.75) {};
		\node[jolinoeud] (3) at (2,0.75) {};
		\draw (0,-1.05) node {$m(y)$};
		\draw (0,1.05) node {$m(x)$};
		\draw (2,1.05) node {$x$};
		\draw (2,-1.05) node {$y$};
		\node[jolinoeud] (p) at (-1.5,0) {};
		\draw (-1.5,-0.3) node {$p$};
		\draw (p)--(0);
		\draw (p)--(1);
		\draw (1)--(0);
		\draw (2)--(0);
		\draw (3)--(1);
		\draw (2)--(3);
		\draw[dashed](p)--node[midway,above](m1){}(2);
		\draw[dashed](p)--node[midway,below](m2){}(3);
		\draw[->,bend left] (-0.75,0.375)to(m2);
		\draw[->,bend right] (-0.75,-0.375)to(m1);
		\draw[dashed,very thick] (0)--(3);
		\draw[dashed,very thick] (1)--(2);
		\draw (0,-1.8) node {$S_u(p)$};
		\draw (2,-1.8) node {$S_v(p)$};
		\end{tikzpicture}
	};
\end{tikzpicture}
	\caption{Illustration of Lemma~\ref{clm:P_{uv}}. We have $f(pm(x))=\overline{px}$ and $f(pm(y))=\overline{py}$. Each edge in $S_u(p)$ and $S_v(p)$ induces an $f$-free non-edge between the two sets ($f$-free non-edges are depicted in bold).}
	\label{fig-PuvEstRelou}
\end{figure}

\subsection{Preliminaries for the case $P_{uv}=\emptyset$: the $f$-orientation and related lemmas}\label{sec:lemmas-Puvempty}

In this section, we gather some lemmas about the structure of $G$ and $f$ when $P_{uv}=\emptyset$. They will be useful to our proofs but we feel that they could perhaps be used again. So we assume from here on in this in this section that $G$ is a D2C graph with $P_{uv}= \emptyset$. Observe that, by Lemma \ref{clm:PuvEmptySuvEmpty}, $S_{uv} = \emptyset$.

We will use $f$ to define an orientation, called \emph{$f$-orientation}, of the edges induced by $S_u$ and by $S_v$, as follows. Let $ab$ be an edge within $S_u$ or within $S_v$ with $f(ab)=\overline{bc}$. Then, we orient $a$ towards $b$ and we denote the resulting arc by $\overrightarrow{ab}$. This construction is shown in Figure~\ref{fig-orientationF}. Since $f$ is injective (Lemma~\ref{clm:assignIsInj}), each edge of $S_u$ and $S_v$ receives exactly one orientation. From now on, all arcs considered are those of this $f$-orientation. We denote by $N^+(x)$, $N^+[x]$, $N^-(x)$ and $N^-[x]$ the out-neighbourhood, closed out-neighbourhood, in-neighbourhood, and closed in-neighbourhood of vertex $x$ with respect to the $f$-orientation, while $N(x)$ and $N[x]$ continue to denote the neighbourhood and closed neighbourhood of $x$ in $G$.

\begin{figure}[ht!]
	\centering
	\begin{tikzpicture}
	\node[jolinoeud](y) at (0,0) {};
	\node[jolinoeud](z) at (0,1) {};
	\node[jolinoeud](x) at (2,1) {};
	\draw (0,0.5) ellipse (0.5 and 1);
	\draw (2,0.5) ellipse (0.5 and 1);
	\draw (0,-0.3) node {$b$};
	\draw (0,1.3) node {$a$};
	\draw (2,1.3) node {$c$};
	\draw[dashed] (x)--node[midway,below](0){}(y);
	\draw (x)--(z);
	\draw[<-] (y)to(z);
	\draw (-0.8,0.5) node {$S_u$};
	\draw (2.8,0.5) node {$S_v$};
	\draw[->,bend left] (0,0.5)to(0);
	\end{tikzpicture}
	\caption{The $f$-orientation is constructed from the function $f$: we orient all edges within $S_u$ and $S_v$, and an edge $ab$ is oriented from $a$ to $b$ if $f(ab) = \overline{bc}$.}
	\label{fig-orientationF}
\end{figure}
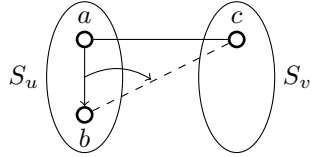

We will now study the properties of the $f$-orientation. The first important lemma is the following (see Figure~\ref{fig-voisinageDroite} for an illustration).

\begin{lemma}\label{clm:arc}
Let $x,y \in S_u$ (resp. $S_v$) be two vertices such that $\overrightarrow{xy}$ is an arc of the $f$-orientation. If neither $x$ nor $y$ is incident with an $f$-free non-edge, then there exists a vertex $t \in S_v$ (resp. $S_u$) such that $N(x)\cap S_v=(N(y)\cap S_v)\cup\{t\}$ (resp. $N(x)\cap S_u=(N(y)\cap S_u)\cup\{t\}$).
\end{lemma}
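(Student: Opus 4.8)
The plan is to fix, without loss of generality, $x,y\in S_u$ and to unpack the arc $\overrightarrow{xy}$: by construction of the $f$-orientation it arises from an image $f(xy)=\overline{yt}$, and the convention used to define $f$ (following Lemma~\ref{clm:assignment}, where $f(ab)=\overline{bc}$ with $c\in N(a)\setminus N(b)$) forces $t\in N(x)\setminus N(y)$. Since $y\in S_u\subseteq N(u)$ we have $yt\notin E$, so $t\neq u$, and because $S_{uv}=\emptyset$ (Lemma~\ref{clm:PuvEmptySuvEmpty}) the vertex $t$ lies in $S_v$. Thus the target equality $N(x)\cap S_v=(N(y)\cap S_v)\cup\{t\}$ already holds for the element $t$ itself, as $t\in N(x)\cap S_v$ while $t\notin N(y)$. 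It therefore suffices to prove the two inclusions $N(y)\cap S_v\subseteq N(x)\cap S_v$ and $N(x)\cap S_v\subseteq (N(y)\cap S_v)\cup\{t\}$.

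For the first inclusion I would argue by contradiction: suppose some $s\in N(y)\cap S_v$ satisfies $s\notin N(x)$ (note $s\neq t$ since $t\notin N(y)$). Then $\overline{xs}$ is a non-edge between $X$ and $Y$, and since $x$ is incident with no $f$-free non-edge, there is an edge $e\in E(X)\cup E(Y)$ with $f(e)=\overline{xs}$; in particular $e$ is critical for the pair $\{x,s\}$, so the unique path of length~$\leq 2$ from $x$ to $s$ passes through $e$. The key observation is that $x-y-s$ is a path of length~$2$, because $y\in N(x)$ and, as $s\in N(y)$, also $y\in N(s)$. Criticality then forces this to be \emph{the} path, whence $e\in\{xy,ys\}$; but $f(xy)=\overline{yt}\neq\overline{xs}$, and $ys$ is an $S_u$--$S_v$ edge, hence outside the domain $E(X)\cup E(Y)$ of $f$. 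If instead the critical path were $x-w-s$ with $w\neq y$, then $x-w-s$ and $x-y-s$ would be two length-$2$ paths sharing no edge, so no edge could be critical for $\{x,s\}$ and $\overline{xs}$ would be $f$-free, again contradicting the hypothesis on $x$. This rules out $s\notin N(x)$ and yields the first inclusion.

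The second inclusion is proved symmetrically, now invoking the hypothesis on $y$: assuming $s\in N(x)\cap S_v$ with $s\neq t$ and $s\notin N(y)$, I would examine the non-edge $\overline{ys}$, note that $y-x-s$ is a length-$2$ path (using $x\in N(y)$ and $x\in N(s)$), and run the identical dichotomy. The only change is that the excluded preimage along the arc is again $f(yx)=\overline{yt}$, which differs from $\overline{ys}$ precisely because $s\neq t$, while $xs$ lies outside the domain of $f$. Combining the two inclusions with the membership of $t$ gives the claimed equality, and the ``$S_v$'' case follows by exchanging the roles of $u$ and $v$, which is legitimate since $P_{uv}=S_{uv}=\emptyset$ renders the whole configuration symmetric.

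I expect the main obstacle to be the careful bookkeeping of the two distinct reasons a preimage edge fails to exist, and combining them correctly: one must simultaneously exploit that the ``obvious'' length-$2$ path through the arc $xy$ has its only $E(X)\cup E(Y)$-edge already committed by $f$ to a different non-edge, and that any alternative length-$2$ path would create a second route and so force the non-edge to be $f$-free. Ensuring $t$ is handled correctly---via $t\notin N(y)$ in the first inclusion and $s\neq t$ in the second---is the delicate point that prevents the argument from accidentally absorbing $t$ into the wrong side of the equality.
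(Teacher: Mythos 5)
Your proof is correct and follows essentially the same route as the paper's: both establish the inclusion $N(y)\cap S_v\subseteq N(x)\cap S_v$ by showing that a missing common neighbour would force the non-edge to be either $f$-free or assigned to $xy$ (impossible since $f(xy)=\overline{yt}$, i.e.\ the arc's orientation), and both then prove uniqueness of $t$ by the symmetric argument on $\overline{t'y}$. Your version merely makes the paper's compressed case analysis (unique path through $y$ versus a second path, and the exclusion of the cross edge from the domain of $f$) fully explicit.
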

\begin{proof}
Assume without loss of generality that $x,y \in S_u$.
First, if there exists a vertex $z$ of $N(y)\cap S_v$ that is not adjacent to $x$, then $z$ and $x$ have $y$ as a common neighbour and thus the non-edge $\overline{xz}$ could only be assigned by $f$ to $xy$, contradicting the $f$-orientation of $xy$. Thus, we have $N(y)\cap S_v\subseteq N(x)\cap S_v$. Now, by the $f$-orientation of $xy$, there exists a vertex $t$ of $S_v$ with $f(xy)=\overline{ty}$. Assume now, for a contradiction, that there exists another vertex $t'$ in $S_v$ that is adjacent to $x$ but not to $y$. Then, $t'$ and $y$ have $x$ as a common neighbour, so the non-edge $\overline{t'y}$ can only be assigned by $f$ to $xy$. This is a contradiction and proves the claim.
\end{proof}

\begin{figure}[ht!]
	\centering
	\begin{tikzpicture}
	\node[jolinoeud] (x) at (0,1) {};
	\node[jolinoeud] (y) at (0,0) {};
	\node[jolinoeud] (t) at (2,1) {};
	\draw[->] (x)to(y);
	\draw (x) node[left] {$x$};
	\draw (y) node[left] {$y$};
	\draw (t) node[right] {$t$};
	\draw (0,0.5) ellipse (0.75 and 1.25);
	\draw (2,0.5) ellipse (0.75 and 1.25);
	\draw (0,-1) node {$S_u$};
	\draw (2,-1) node {$S_v$};
	\draw (2,0) ellipse (0.25 and 0.65);
	\node[jolinoeud] (z1) at (2,0.4) {};
	\node[jolinoeud] (z2) at (2,-0.4) {};
	\draw (2,0.15) node[scale=0.5] {$\bullet$};
	\draw (2,0) node[scale=0.5] {$\bullet$};
	\draw (2,-0.15) node[scale=0.5] {$\bullet$};
	\draw (y)--(z1);
	\draw (y)--(z2);
	\draw (x)--(z1);
	\draw (x)--(z2);
	\draw (x)--(t);
	\draw[dashed] (y)--(t);
        \draw[->,bend left] (0,0.5)to(0.5,0.28);
	\end{tikzpicture}
	\caption[voisinageDroite]{Illustration of Lemma~\ref{clm:arc}: if $x$ and $y$ are two vertices in $S_u$ (resp. $S_v$) such that $\overrightarrow{xy}$ is an arc of the $f$-orientation and neither $x$ nor $y$ is incident with an $f$-free non-edge, then the neighbourhood of $x$ in $S_v$ (resp. $S_u$) is exactly the neighbourhood of $y$ in $S_v$ (resp. $S_u$) plus one vertex.}
	\label{fig-voisinageDroite}
\end{figure}
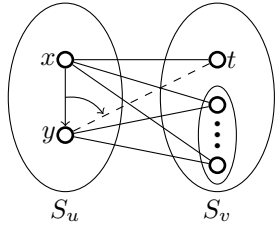

The next lemma states that directed cycles in $G$ yield many $f$-free non-edges.

\begin{lemma}\label{clm:directed-cycle}
Let $\overrightarrow{C}$ be a cycle of $G$ that is directed with respect to the $f$-orientation. Then, there are at least $|\overrightarrow{C}|$ $f$-free non-edges incident with the vertices of $\overrightarrow{C}$.
\end{lemma}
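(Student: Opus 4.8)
The plan is to consider a directed cycle $\overrightarrow{C} = c_0 c_1 \cdots c_{k-1} c_0$ (with arcs $\overrightarrow{c_i c_{i+1}}$, indices mod $k$) lying entirely in $S_u$ or entirely in $S_v$; say in $S_u$. By Lemma~\ref{clm:arc}, I expect to exploit the fact that each arc $\overrightarrow{c_i c_{i+1}}$ forces, via the $f$-orientation, a specific $f$-free non-edge from $c_i$ into $S_v$ (the vertex $t$ with $f(c_i c_{i+1}) = \overline{t c_{i+1}}$), \emph{provided} neither endpoint is already incident with an $f$-free non-edge. The first step is therefore a case split: if \emph{some} vertex of $\overrightarrow{C}$ is incident with an $f$-free non-edge, I want to argue that the situation propagates around the cycle; if \emph{none} is, I invoke Lemma~\ref{clm:arc} uniformly.

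First I would handle the clean case where no vertex of $\overrightarrow{C}$ is incident with an $f$-free non-edge, and derive a contradiction. For each $i$, Lemma~\ref{clm:arc} gives a vertex $t_i \in S_v$ with $N(c_i)\cap S_v = (N(c_{i+1})\cap S_v)\cup\{t_i\}$, so $|N(c_i)\cap S_v| = |N(c_{i+1})\cap S_v| + 1$ for every $i$. Summing this equality around the cycle yields $0 = k$, which is impossible since $k = |\overrightarrow{C}| \geq 3$. Hence at least one vertex of $\overrightarrow{C}$ must be incident with an $f$-free non-edge, and this is the engine that I then want to run around the whole cycle.

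The main work, and the step I expect to be the obstacle, is converting ``at least one $f$-free non-edge'' into ``at least $|\overrightarrow{C}|$ of them.'' I plan to set this up as a counting/charging argument: I will show that for each arc $\overrightarrow{c_i c_{i+1}}$ of the cycle, one can charge a distinct $f$-free non-edge to it. The natural candidate for the non-edge charged to $\overrightarrow{c_i c_{i+1}}$ is $\overline{c_i z}$ for a suitable $z \in S_v$ witnessing the failure of the neighbourhood-inclusion of Lemma~\ref{clm:arc} at that arc: if $c_i$ and $c_{i+1}$ were not incident with $f$-free non-edges we showed above this is contradictory, so at each arc the hypothesis of Lemma~\ref{clm:arc} must break, meaning an endpoint of that arc carries an $f$-free non-edge. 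The delicate point is \emph{injectivity of the charging}: the same $f$-free non-edge incident with a vertex $c_i$ could a priori be blamed on both incoming arc $\overrightarrow{c_{i-1}c_i}$ and outgoing arc $\overrightarrow{c_i c_{i+1}}$, so I must distinguish the non-edges produced by the in-arc from those produced by the out-arc (for instance, an out-arc at $c_i$ produces an $f$-free non-edge of the form $\overline{t c_{i+1}}$ pointing at the \emph{successor}, while an in-arc produces one at $c_i$ itself), ensuring the $k$ charged non-edges are pairwise distinct.

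If the direct charging proves awkward, my fallback is a global double-count: let $d_i = |N(c_i)\cap S_v|$ and track how $d_i$ changes along each arc. Each arc either behaves as in Lemma~\ref{clm:arc}, decreasing $d$ by exactly one, or ``leaks'' by creating $f$-free non-edges at its endpoints that account for the discrepancy from a clean decrease. Since the total change of $d$ around the closed cycle is $\sum_i (d_i - d_{i+1}) = 0$ rather than the $k$ forced by clean arcs, the deficit of $k$ must be absorbed by $f$-free non-edges, and each unit of deficit corresponds to a distinct $f$-free non-edge incident with a cycle vertex. Carefully bookkeeping this deficit to guarantee all $|\overrightarrow{C}|$ non-edges are genuinely distinct is the crux; I would organize it by assigning to each arc a signed contribution and checking no $f$-free non-edge is counted twice, exactly as in the symmetric roles of $S_u$ and $S_v$ that the lemma statement anticipates.
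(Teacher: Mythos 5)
Your opening telescoping argument is correct as far as it goes: if no vertex of $\overrightarrow{C}$ were incident with an $f$-free non-edge, Lemma~\ref{clm:arc} would give $|N(c_i)\cap S_v| = |N(c_{i+1})\cap S_v|+1$ for every arc, and summing around the cycle yields the contradiction $0=k$. But this is a \emph{global} contradiction, obtained by assuming the hypothesis of Lemma~\ref{clm:arc} at every arc simultaneously; it therefore produces only \emph{one} $f$-free non-edge incident with $\overrightarrow{C}$. The sentence ``so at each arc the hypothesis of Lemma~\ref{clm:arc} must break, meaning an endpoint of that arc carries an $f$-free non-edge'' is a non sequitur. If, say, exactly one vertex $c_0$ of the cycle is incident with $f$-free non-edges, then every arc avoiding $c_0$ satisfies the hypothesis of Lemma~\ref{clm:arc} perfectly well, your charging scheme assigns nothing to those arcs, and nothing in your argument forces more than the non-edges at $c_0$ to exist. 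The amplification from one non-edge to $|\overrightarrow{C}|$ non-edges is the entire content of the lemma, and it is exactly the step that is missing.

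The fallback double-count does not close the gap either. One can indeed refine the proof of Lemma~\ref{clm:arc} to show that, for an arc $\overrightarrow{xy}$ with $f(xy)=\overline{ty}$, every vertex of $(N(y)\cap S_v)\setminus N(x)$ yields an $f$-free non-edge at $x$ and every vertex of $(N(x)\cap S_v)\setminus (N(y)\cup\{t\})$ yields one at $y$; with this, the per-arc deficit $1-(d_x-d_y)$ is bounded by the number of non-edges witnessed through that arc, and the deficits do sum to $k$. But the same $f$-free non-edge $\overline{c_iz}$ can be witnessed both by the in-arc and by the out-arc at $c_i$ (this happens whenever $z$ is adjacent to both the predecessor and the successor of $c_i$ but not to $c_i$), so without an injectivity argument the count of \emph{distinct} non-edges collapses to roughly $k/2$. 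You name this distinctness issue as ``the crux'' but leave it unresolved, and it is precisely where the work lies. The paper's proof avoids degree-counting entirely: for each arc $\overrightarrow{x_ix_{i+1}}$ with $f(x_ix_{i+1})=\overline{x_{i+1}y_i}$, it walks \emph{backwards} from $x_i$ along the cycle to the first predecessor $x_j$ with $x_jy_i\notin E$ (the walk terminates since $x_{i+2}y_i\notin E$); then $\overline{x_jy_i}$ is $f$-free because its only candidate preimage $x_jx_{j+1}$ is already assigned to $\overline{x_{j+1}y_j}$, and two indices $i_1<i_2$ cannot yield the same non-edge because that would force $x_{i_1+1}y_{i_1}\in E$, contradicting $f(x_{i_1}x_{i_1+1})=\overline{x_{i_1+1}y_{i_1}}$. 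Some argument of this explicit, per-arc kind is what your charging scheme needs in order to be injective; as written, your proposal establishes $\free(f)\geq 1$ in this configuration, not the claimed $|\overrightarrow{C}|$ non-edges.
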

\begin{proof}
Let $\overrightarrow{C}=x_0,x_1,\ldots,x_{k-1}$ be a cycle of $G$ that is directed with respect to the $f$-orientation (with the arc $\overrightarrow{x_ix_{i+1}}$ for each $i$ in $\{ 0,\ldots,k-1 \}$). Without loss of generality, $\overrightarrow{C}$ is in $X$.
In this proof, we consider the addition modulo $k$.

By definition of the $f$-orientation, for all $i \in \{ 0,\ldots,k-1 \}$, there exists a vertex $y_i \in Y$ such that $f(x_ix_{i+1})=\overline{x_{i+1}y_i}$. Note that we may have $y_i=y_j$ for some $i\neq j$.

Let $i \in \{ 0,\ldots,k-1 \}$, and let $x_j$ be the first predecessor of $x_i$ in the cyclic order of $\overrightarrow{C}$ such that $x_jy_i \not\in E$. This clearly happens at some point since $x_{i+2}y_i \notin E$. Note that we have $j \neq i+1$, since otherwise $x_{i+1}$ and $y$ would have two common neighbours: $x_i$ and $x_{i+2}$, a contradiction since $f(x_ix_{i+1})=\overline{x_{i+1}y_i}$ implies that $x_i$ is the unique common neighbour of $y_i$ and $x_{i+1}$.

We now prove that $\overline{x_jy_i}$ is $f$-free. Assume by contradiction that it is not $f$-free. Then, there exists an edge $e$ such that $f(e)=\overline{x_jy_i}$. However, since $x_j$ and $y_i$ have a common neighbour which is $x_{j+1}$, we necessarily have $e=x_jx_{j+1}$. But by definition of $\overrightarrow{C}$, we already have the vertex $y_j \in Y$ such that $f(x_jx_{j+1})=\overline{x_{j+1}y_j}$, and thus $y_i\neq y_j$. This is a contradiction, which implies that $\overline{x_jy_i}$ is $f$-free. This is illustrated in Figure~\ref{fig-directedCycle}.

Finally, we prove that any two $f$-free non-edges found with this method are distinct. Assume by contradiction that there are two vertices $x_{i_1}$, $x_{i_2}$ in $\overrightarrow{C}$ with $i_1<i_2$ which lead to the same $f$-free non-edge $\overline{x_jy}$ (with $y=y_{i_1}=y_{i_2}$). Then, this means that $x_j$ is the first predecessor of $x_{i_2}$ such that $x_jy \not\in E$. In particular, since $i_1<i_2$, this implies that $x_{i_1+1}y \in E$, which contradicts the fact that $f(x_{i_1}x_{i_1+1})=\overline{x_{i_1+1}y}$.

Thus, there are at least $k$ $f$-free non-edges incident with the vertices of $\overrightarrow{C}$.
\end{proof}

\begin{figure}[ht!]
	\centering
	\begin{tikzpicture}[scale=0.75]
	\node[jolinoeud] (xi) at (0,1) {};
	\node[jolinoeud] (xi1) at (0,0) {};
	\node[jolinoeud] (xj) at (0,4) {};
	\node[jolinoeud] (xj1) at (0,3) {};
	\draw[->] (xi)to(xi1);
	\draw[->] (xj)to(xj1);
	\draw[->,dashed] (0,4.5)to(xj);
	\draw[->,dashed] (0,1.5)to(xi);
	\draw[dashed] (xj1)to(0,2.5);
	\draw[dashed] (xi1)to(0,-0.5);
	\draw (xi) node[left] {$x_i$};
	\draw (xi1) node[left] {$x_{i+1}$};
	\draw (xj) node[left] {$x_j$};
	\draw (xj1) node[left] {$x_{j+1}$};
	\draw (0,1.75) node[scale=0.75] {$\bullet$};
	\draw (0,2) node[scale=0.75] {$\bullet$};
	\draw (0,2.25) node[scale=0.75] {$\bullet$};
	\node[jolinoeud] (yi) at (4,1) {};
	\node[jolinoeud] (yj) at (4,4) {};
	\draw (yi) node[right] {$y_i$};
	\draw (yj) node[right] {$y_j$};
	\draw (xi)to(yi);
	\draw[dashed] (xi1)to(yi);
	\draw (xj1)to(yi);
	\draw[dashed,very thick] (xj)to(yi);
	\draw (xj)to(yj);
	\draw[dashed] (xj1)to(yj);
	\draw (0,2) ellipse (1.5 and 3.5);
	\draw (4,2) ellipse (0.75 and 3.5);
	\draw (0,-1.75) node {$S_u$};
	\draw (4,-1.75) node {$S_v$};
	\end{tikzpicture}
	\caption{Illustration of the proof of Lemma~\ref{clm:directed-cycle}: $x_j$ is the first predecessor of $x_i$ in the cycle such that $x_jy_i \notin E$. The non-edge $\overline{x_jy_i}$ is then $f$-free.}
	\label{fig-directedCycle}
\end{figure}
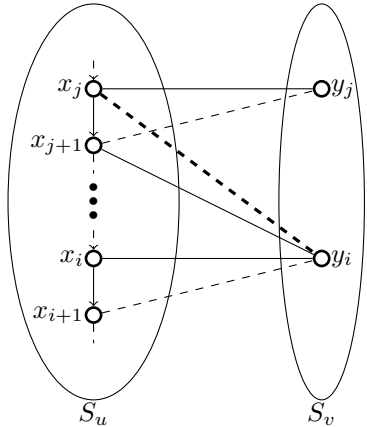

We now show that transitive triangles also induce $f$-free non-edges.

\begin{lemma}\label{clm:triangle}
Let $x,y,z\in S_u$ (resp. $S_v$) be three pairwise adjacent vertices such that $\overrightarrow{xy}$,  $\overrightarrow{xz}$ and  $\overrightarrow{yz}$ are oriented edges. Then there is an $f$-free non-edge incident with $x$.
\end{lemma}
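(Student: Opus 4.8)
The plan is to locate an $f$-free non-edge explicitly at the ``sink end'' of the triangle and then read it back against the source $x$. Assume without loss of generality that $x,y,z\in S_u$; the three arcs $\overrightarrow{xy}$, $\overrightarrow{xz}$, $\overrightarrow{yz}$ make the triangle transitive, with $x$ its source and $z$ its sink. The starting point is the arc $\overrightarrow{yz}$: by the definition of the $f$-orientation there is a vertex $c$ with $f(yz)=\overline{zc}$, so that $yz$ is critical for the pair $\{z,c\}$. Since $\overline{zc}$ is a non-edge, the unique path of length at most $2$ from $z$ to $c$ must be $z-y-c$, giving $c\in N(y)$ and $c\notin N(z)$. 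Note also that $c\neq u$, because $u$ is adjacent to every vertex of $S_u$ and hence to $z$, whereas $c\notin N(z)$; thus $c\in S_v$.

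Next I would show that $c\notin N(x)$. Suppose instead $c\in N(x)$. The triangle is complete, so $x$ and $y$ are both adjacent to $z$, and by assumption both are adjacent to $c$; hence $x$ and $y$ are two distinct common neighbours of $z$ and $c$. Then the path $z-x-c$ has length $2$ and avoids the edge $yz$, contradicting the fact that $yz$ is critical for $\{z,c\}$. Therefore $\overline{xc}$ is a genuine non-edge incident with $x$, and it only remains to check that it is $f$-free.

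To do this I would rule out every candidate preimage of $\overline{xc}$. Any edge $e$ with $f(e)=\overline{xc}$ is critical for the pair $\{x,c\}$, hence lies on every path of length at most $2$ between $x$ and $c$; in particular it lies on the path $x-y-c$ (which exists since $x\sim y$, $y\sim c$ and $x\not\sim c$), so $e\in\{xy,yc\}$. The edge $yc$ runs between $S_u\subseteq X$ and $S_v\subseteq Y$, so it lies neither inside $X$ nor inside $Y$ and is therefore not in the domain of $f$. For the edge $xy$, the orientation $\overrightarrow{xy}$ forces $f(xy)=\overline{ya}$ for some $a\in S_v$, a non-edge whose endpoint in $S_u$ is $y\neq x$, so $f(xy)\neq\overline{xc}$. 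Thus neither candidate works, $\overline{xc}$ is $f$-free, and the proof is complete.

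The only real obstacle is bookkeeping rather than ideas. One must read the orientation of $\overrightarrow{yz}$ in the correct direction, so that the associated non-edge sits at the head $z$ and yields $c\in N(y)\setminus N(z)$ (placing $c$ in $S_v$), and one must verify that the two potential preimages of $\overline{xc}$ are exactly $xy$ and $yc$ before excluding both. No appeal to Lemma~\ref{clm:arc} or to a counting argument such as in Lemma~\ref{clm:directed-cycle} is needed: transitivity is used only to guarantee that $x$ is adjacent to the sink $z$, which is precisely what breaks the criticality of $yz$ once we assume $c\in N(x)$.
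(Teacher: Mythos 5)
Your proof is correct and follows essentially the same route as the paper's: the paper likewise takes the vertex $w_2$ with $f(yz)=\overline{zw_2}$ (your $c$), uses the adjacency $xz$ together with the criticality of $yz$ for $\{z,w_2\}$ to get $xw_2\notin E$, and observes that the only possible preimage of $\overline{xw_2}$ is the edge $xy$, which is excluded because the orientation $\overrightarrow{xy}$ forces $f(xy)$ to be a non-edge at $y$. The only difference is organizational: the paper argues by contradiction (assuming no $f$-free non-edge at $x$, so that $\overline{xw_2}$ must be assigned to $xy$, contradicting $f(xy)=\overline{yw_1}$), whereas you run the same argument directly and skip the auxiliary vertex $w_1$.
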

\begin{proof}
Without loss of generality, assume that $x,y,z \in S_u$.
Assume by contradiction that there is no $f$-free non-edge incident with $x$. By Lemma~\ref{clm:everySuHasNeighbInSv}(c), $x$ has a neighbour $w_1$ in $S_v$, and by definition of the $f$-orientation, we can assume that $f(xy)=\overline{yw_1}$. Similarly, there is a vertex $w_2$ of $S_v$ adjacent to $y$ with $f(yz)=\overline{zw_2}$ (clearly $w_1\neq w_2$). Then neither $z$ nor $w_2$ is adjacent to $w_1$ since $x$ is the unique common neighbour of $y$ and $w_1$. Now similarly, $x$ and $w_2$ cannot be adjacent, the only common neighbour of $z$ and $w_2$ is $y$. But then the non-edge $\overline{xw_2}$ is assigned to $xy$ by $f$, which contradicts the injectivity of $f$. This is illustrated in Figure~\ref{fig-triangleInSu}.
\end{proof}

\begin{figure}[ht!]
	\centering
	\begin{tikzpicture}
	\node[jolinoeud] (x) at (0,2) {};
	\node[jolinoeud] (y) at (0,1) {};
	\node[jolinoeud] (z) at (0,0) {};
	\node[jolinoeud] (w1) at (2,2) {};
	\node[jolinoeud] (w2) at (2,1) {};
	\draw (x) node[left] {$x$};
	\draw (y) node[left] {$y$};
	\draw (z) node[below left] {$z$};
	\draw (w1) node[right] {$w_1$};
	\draw (w2) node[right] {$w_2$};
	\draw[->] (x)to(y);
	\draw[->] (y)to(z);
	\draw[->,bend right=45] (x)to(z);
	\draw (x)to(w1);
	\draw (y)to(w2);
	\draw[dashed,very thick] (x)to(w2);
	\draw[dashed] (y)to(w1);
	\draw[dashed] (z)to(w1);
	\draw[dashed] (z)to(w2);
	\draw (0,1) ellipse (0.75 and 1.5);
	\draw (2,1) ellipse (0.75 and 1.5);
	\draw (0,-0.75) node {$S_u$};
	\draw (2,-0.75) node {$S_v$};
	\end{tikzpicture}
	\caption{Illustration of the proof of Lemma~\ref{clm:triangle}: there is an $f$-free non-edge incident with the ancestor in a transitive triangle.}
	\label{fig-triangleInSu}
\end{figure}
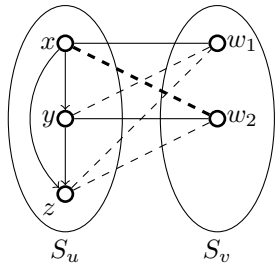

The next two lemmas state that each source and each sink of $G$ has an $f$-free non-edge in its closed neighbourhood. Recall that we do not consider isolated vertices as sources or sinks.

\begin{lemma}\label{clm:sink}
  Let $x$ be a sink of the $f$-orientation. Then, there is at least one $f$-free non-edge incident with the vertices of $N^-[x]$.
\end{lemma}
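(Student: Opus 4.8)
The plan is to argue by contradiction: assume that no $f$-free non-edge is incident with a vertex of $N^-[x]$, and derive a contradiction. Assume without loss of generality that $x\in S_u$, so that (since $P_{uv}=\emptyset$ forces $S_{uv}=\emptyset$ by Lemma~\ref{clm:PuvEmptySuvEmpty}) we have $Y=\{u\}\cup S_v$ and $X=\{v\}\cup S_u$. Because $x$ is a sink, every $S_u$-neighbour of $x$ is an in-neighbour, i.e.\ $N(x)\cap S_u=N^-(x)=:W$, and $W\neq\emptyset$; moreover $A:=N(x)\cap S_v\neq\emptyset$ by Lemma~\ref{clm:everySuHasNeighbInSv}. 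For each $w\in W$ we have an arc $\overrightarrow{wx}$, so by Lemma~\ref{clm:arc} (applicable since, by hypothesis, neither $w$ nor $x$ meets an $f$-free non-edge) there is a vertex $c_w\in S_v$ with $N(w)\cap S_v=A\cup\{c_w\}$ and $f(wx)=\overline{xc_w}$; in particular $c_w\notin A$ and $w$ is the unique common neighbour of $x$ and $c_w$.

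First I would extract a useful non-neighbour of $x$ from the dominating edge. Since $G$ is D2C, the edge $ux$ is critical, and I claim it can only be critical for a pair $\{x,z\}$ with $z\in S_u\setminus N[x]$ for which $u$ is the unique common neighbour of $x$ and $z$. Indeed, Observation~\ref{obs:critical} restricts the possible pairs to $\{u,z\}$ with $z\in N[x]$ and $\{x,z\}$ with $z\in N[u]$; since $N(u)=S_u\cup\{v\}$, a short analysis rules out every pair of the first type (its two vertices are either adjacent or joined by the path through $v$) as well as the pair $\{x,v\}$ (which has the second path $x$–$a$–$v$ through any $a\in A$, and $A\neq\emptyset$). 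This leaves only $z\in S_u$ with $xz\notin E$, and the criticality of $ux$ for $\{x,z\}$ forces $N(x)\cap N(z)=\{u\}$.

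Next I would turn $z$ into an $f$-free non-edge at $x$. By Lemma~\ref{clm:everySuHasNeighbInSv}, $z$ has a neighbour $d\in S_v$; since $u$ is the only common neighbour of $x$ and $z$ and $d\neq u$, we get $d\notin N(x)$, i.e.\ $d\in S_v\setminus A$, so $\overline{xd}$ is a genuine non-edge between $X$ and $Y$ incident with $x\in N^-[x]$. The key mechanism is that a non-edge whose endpoints have two or more common neighbours cannot lie in the image of $f$ (no single edge can then be critical for the pair), and is therefore $f$-free. Hence if $x$ and $d$ have at least two common neighbours we obtain the desired contradiction at once.

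The hard part will be the tight case, in which $x$ and every such $d$ have exactly one common neighbour (by diameter $2$ they always have at least one, necessarily lying in $W\cup A$). Here I would analyse this unique common neighbour $t$: if $t\in W$, then $d\in N(t)\cap(S_v\setminus A)$ forces $d=c_t$, so that $\overline{xd}=f(tx)$ and the rigidity propagates; if $t\in A$, then $\overline{xd}$ fails to be $f$-free only when the $S_v$-edge $td$ is charged by $f$ precisely to $\overline{dx}$. I expect the contradiction to come from combining these constraints with the sink hypothesis: the neighbourhood nesting supplied by Lemma~\ref{clm:arc} together with the uniqueness of the common neighbours should force some edge at $x$ to be charged away from $x$ (creating an out-arc, contradicting that $x$ is a sink), or else produce a vertex having two common neighbours with some $w\in W$, yielding an $f$-free non-edge incident with an in-neighbour. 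Pinning down this final step cleanly is the main obstacle, and it is precisely where the interplay between the $f$-orientation and the critical pairs of the edges $ux$ and $xa$ (for $a\in A$) must be exploited.
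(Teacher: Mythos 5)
Your proposal is not a complete proof: the decisive step is missing, and you acknowledge this yourself (``I expect the contradiction to come from\dots'', ``Pinning down this final step cleanly is the main obstacle''). The preparatory material is sound --- the application of Lemma~\ref{clm:arc} giving $N(w)\cap S_v=A\cup\{c_w\}$ and $f(wx)=\overline{xc_w}$ for each $w\in W=N^-(x)$, the analysis of the criticality of $ux$ producing $z\in S_u$ with $N(x)\cap N(z)=\{u\}$, and the key mechanism that a non-edge whose endpoints have two common neighbours is $f$-free. But the ``tight case'' you defer is not a residual case to be mopped up: it is where the entire content of the lemma lies, and the route you chose genuinely stalls there. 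When the unique common neighbour $t$ of $x$ and $d$ lies in $W$, your own computation shows $d=c_t$ and $\overline{xd}=f(tx)$; this is a perfectly self-consistent local configuration, so no contradiction can be extracted from the pair $\{x,d\}$, and the detour through $ux$, $z$ and $d$ cannot by itself finish the proof.

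The paper closes the argument by a counting step that your sketch names but never executes (``the critical pairs of the edges \dots $xa$ for $a\in A$''). Fix $y\in N(x)\cap S_v$. After the reduction that every $w\in W$ is adjacent to $y$ (which you have in the stronger form $A\subseteq N(w)$), consider the criticality of the edge $xy$: it cannot be critical for $\{x,y\}$ (any $w\in W$ is a common neighbour), nor for a pair $\{y,z\}$ with $z\in N(x)$ (if $z\in W$ the edge $yz$ exists; if $z\in S_v\cup\{u\}$ there is the path through $v$). Hence $xy$ is critical for a pair $\{x,t\}$ with $t\in N(y)$ and $N(x)\cap N(t)=\{y\}$; since $u$ would otherwise be a second common neighbour, $t\in S_v$, and $tw\notin E$ for every $w\in W$. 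Now each of the $|W|+1$ non-edges $\overline{xt}$ and $\overline{wt}$ ($w\in W$) either has endpoints with two common neighbours (hence is $f$-free) or admits $yt$ as its \emph{unique} possible preimage under $f$, because the other candidate edges $xy$ and $wy$ are cross edges between $X$ and $Y$ and so lie outside the domain of $f$. Since $f$ is a function, at most one of these non-edges is assigned to $yt$, so at least $|W|\geq 1$ of them is $f$-free, and all of them are incident with $N^-[x]$ --- a contradiction. This single counting argument is precisely the missing core of your proof.
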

\begin{proof}
Let $x \in S_u$ (without loss of generality) be a sink of the $f$-orientation, and let $a_1,\ldots,a_k$ be the in-neighbours of $x$ (recall that they all belong to $S_u$). 

By Lemma~\ref{clm:everySuHasNeighbInSv}(c), $x$ has a neighbour $y\in S_v$.
Let $a_i$ be an in-neighbour of $x$. If $a_i$ is not adjacent to $y$, then the non-edge $\overline{a_iy}$ is $f$-free, since otherwise we should have $f(xa_i)=\overline{a_iy}$ and the arc $\overrightarrow{ta_i}$, a contradiction since $x$ is a sink. Thus, from now on we may assume that all in-neighbours of $x$ are adjacent to $y$, for otherwise the statement of the claim holds.

Like every edge, the edge $xy$ is critical. It cannot be critical for the pair $\{x,y\}$, since (by the previous paragraph) these two vertices have all vertices in $N^-(x)$ as common neighbours. It also cannot be critical for a pair consisting of $y$ and a neighbour $z$ of $x$: if $z \in S_u$ then $z$ is an $a_i$; and if $z \in S_v \cup \{u\}$ then $v \in N(y) \cap N(z)$. Hence, $xy$ must be critical for a pair $\{x,t\}$ with $t$ a neighbour of $y$. Since $N(x) \cap N(t) = \{y\}$, we necessarily have $t \in S_v$, and $ta_i \notin E$ for every $a_i$. However, among all the non-edges $\overline{a_it}$ and the non-edge $\overline{xt}$, all but one are $f$-free: their only possible preimage by the function $f$ if $yt$. This is depicted in Figure~\ref{fig-sinkClaim}. Since at least one $a_i$ exists, there is at least one $f$-free non-edge incident with $N^-[x]$, and the claim follows.
\end{proof}

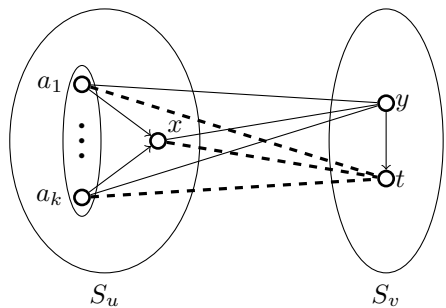
\begin{figure}[ht!]
	\centering
	\begin{tikzpicture}	
	\node[jolinoeud] (x) at (0,0) {};
	\node[jolinoeud] (a1) at (-1,0.75) {};
	\node[jolinoeud] (a2) at (-1,-0.75) {};
	\node[jolinoeud] (y) at (3,0.5) {};
	\node[jolinoeud] (t) at (3,-0.5) {};
	\draw[->] (a1)to(x);
	\draw[->] (a2)to(x);
	\draw[->] (y)to(t);
	\draw (x)to(y);
	\draw (a1)to(y);
	\draw (a2)to(y);
	\draw[dashed,very thick] (x)to(t);
	\draw[dashed,very thick] (a1)to(t);
	\draw[dashed,very thick] (a2)to(t);
	\draw (x) node[above right] {$x$};
	\draw (a1) node[left=0.12] {$a_1$};
	\draw (a2) node[left=0.12] {$a_k$};
	\draw (y) node[right] {$y$};
	\draw (t) node[right] {$t$};
	\draw (-1,0.2) node[scale=0.5] {$\bullet$};
	\draw (-1,0) node[scale=0.5] {$\bullet$};
	\draw (-1,-0.2) node[scale=0.5] {$\bullet$};
	\draw (-1,0) ellipse (0.25 and 1);
	\draw (-0.7,0) ellipse (1.25 and 1.75);
	\draw (3,0) ellipse (0.75 and 1.75);
	\draw (-0.7,-2.05) node {$S_u$};
	\draw (3,-2.05) node {$S_v$};
	\end{tikzpicture}
	\caption{If $x$ is a sink, then its closed in-neighbourhood is incident with $f$-free non-edges: among all the bolded non-edges, only one can have a preimage by the function $f$, this preimage being $yt$.}
	\label{fig-sinkClaim}
\end{figure}

\begin{lemma}\label{clm:source}
	Let $x$ be a source of the $f$-orientation. Then, there is at least one $f$-free non-edge incident with the vertices of $N^+[x]$.
\end{lemma}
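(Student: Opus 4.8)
The plan is to mirror the proof of Lemma~\ref{clm:sink} as closely as the asymmetry of the $f$-orientation permits. Assume without loss of generality that $x\in S_u$ is a source, write $N^+(x)=\{b_1,\dots,b_k\}$ with $k\geq 1$ (all $b_i\in S_u$), and for each $i$ let $w_i\in S_v$ be the vertex with $f(xb_i)=\overline{b_iw_i}$; by the $f$-orientation and criticality, $x$ is the unique common neighbour of $b_i$ and $w_i$, so $x\sim w_i$ and $b_i\not\sim w_i$. I would suppose for contradiction that no $f$-free non-edge is incident with $N^+[x]$. Then neither $x$ nor any $b_i$ is incident with an $f$-free non-edge, so Lemma~\ref{clm:arc} applies to every arc $\overrightarrow{xb_i}$ and gives $N(b_i)\cap S_v=A\setminus\{w_i\}$, where $A:=N(x)\cap S_v$ is non-empty by Lemma~\ref{clm:everySuHasNeighbInSv}(c). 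In particular $b_i\sim a$ for every $a\in A\setminus\{w_i\}$.

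First I would treat the case where some $y\in A\setminus\{w_1,\dots,w_k\}$ exists. Such a $y$ is adjacent to $x$ and, since $y\neq w_i$ for all $i$, to every $b_i$; thus $y$ is a common neighbour of all of $N^+[x]$. I would then run the argument of Lemma~\ref{clm:sink} on the edge $xy$: it is critical, but it cannot be critical for $\{x,y\}$ (they share $b_1$), nor for any pair $\{y,z\}$ (if $z=u$ or $z\in S_v$ then $v$ is a second common neighbour; if $z=b_i$ then $yb_i\in E$), so it must be critical for a pair $\{x,t\}$ with $t\in N(y)$. Exactly as in Lemma~\ref{clm:sink}, one checks $t\in S_v$, $t\not\sim x$, and (by uniqueness of $y$ as a common neighbour of $x$ and $t$) $t\not\sim b_i$ for all $i$. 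Then each of the $k+1$ non-edges $\overline{xt},\overline{b_1t},\dots,\overline{b_kt}$ has $y$ as its only relevant common neighbour and hence $yt$ as its only possible preimage under $f$; since $f(yt)$ is a single non-edge, at least $k\geq 1$ of them are $f$-free, contradicting our assumption.

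The remaining, and genuinely harder, case is $A=\{w_1,\dots,w_k\}$, i.e.\ $x$ has \emph{no} common neighbour in $S_v$ adjacent to all of its out-neighbours simultaneously. This is precisely the situation that cannot occur in the sink setting, where the orientation points the ``missing neighbour'' the other way, and it is the main obstacle. Here I would switch anchors and exploit the criticality of the edge $xu$: it is critical neither for $\{x,u\}$ (common neighbour $b_1$) nor for any $\{u,z\}$ (common neighbour $v$ for $z\in S_v$, an edge for $z\in S_u$), hence it is critical for a pair $\{x,z\}$ with $z\in S_u$, $x\not\sim z$ and $N(x)\cap N(z)=\{u\}$. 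Thus $z$ has no neighbour in $A$, so by Lemma~\ref{clm:everySuHasNeighbInSv}(c) it has a neighbour $a^\ast\in S_v\setminus A$; note $x\not\sim a^\ast$ and, since $a^\ast\notin A\supseteq N(b_i)\cap S_v$, also $b_i\not\sim a^\ast$ for every $i$. I would then examine the non-edges $\overline{xa^\ast}$ and $\overline{b_1a^\ast},\dots,\overline{b_ka^\ast}$, all incident with $N^+[x]$: because $x$ is a source, none of them can be the image of an edge incident with $x$ inside $S_u$, so their only possible preimages are arcs lying inside $S_v$ (into $a^\ast$) or, for the $\overline{b_ia^\ast}$, arcs into $b_i$ inside $S_u$; chasing these with Lemma~\ref{clm:arc} should force one of the $k+1$ non-edges to be $f$-free.

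I expect this last case to be the crux of the proof. The difficulty is that, unlike for sinks, a naive anchor at a neighbour in $S_v$ may leave $xy$ critical for a pair $\{y,b_i\}$ rather than for the desired $\{x,t\}$, and the alternative anchor at $u$ only transfers attention to a new vertex $a^\ast$. The delicate point will be to carry out the preimage-chasing so that it terminates in an $f$-free non-edge incident with $N^+[x]$ rather than merely relocating the problem; keeping careful track of on which side ($S_u$ or $S_v$) each forced preimage must live, together with the source property forbidding any preimage oriented into $x$, is what should make the argument close.
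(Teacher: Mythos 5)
Your Case 1 is correct and complete: it is a faithful adaptation of the sink argument, and the count of the $k+1$ non-edges $\overline{xt},\overline{b_1t},\dots,\overline{b_kt}$ all forced onto the single preimage $yt$ does close that case. Your Case 2 setup is also sound (one small omission: when extracting $z$ from the criticality of $xu$ you must also rule out the pair $\{x,v\}$, which is immediate since $u$ and every vertex of $A\neq\emptyset$ are common neighbours of $x$ and $v$). The genuine gap is that Case 2 is never finished: ``chasing these with Lemma~\ref{clm:arc} should force one of the $k+1$ non-edges to be $f$-free'' is a statement of intent, not an argument, and you yourself flag it as the crux. Moreover Lemma~\ref{clm:arc} is not actually the tool that closes it; what closes it is a pigeonhole on a single edge of $S_v$ together with the observation that $|A|\geq 2$.

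Here is the missing step, so you can see it is fillable in a few lines. Since $\overline{xa^*}$ is not $f$-free and (as you correctly note) its preimage cannot be an edge of $S_u$ incident with $x$, the vertices $x$ and $a^*$ must have a \emph{unique} common neighbour $w^*$, which lies in $A$, and $f(w^*a^*)=\overline{xa^*}$. Now apply Lemma~\ref{clm:everySuHasNeighbInSv}(c) to any $b_i$: it gives $N(b_i)\cap S_v=A\setminus\{w_i\}\neq\emptyset$, so $|A|\geq 2$; since in Case 2 we have $A=\{w_1,\dots,w_k\}$, there is an index $i$ with $w_i\neq w^*$. For this $i$, the vertex $w^*$ lies in $N(b_i)\cap N(a^*)$. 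If $b_i$ and $a^*$ have a second common neighbour, then no edge is critical for the pair $\{b_i,a^*\}$ and $\overline{b_ia^*}$ is $f$-free; if $w^*$ is their unique common neighbour, then the only possible preimage of $\overline{b_ia^*}$ is the edge $w^*a^*$ (the edge $b_iw^*$ crosses between $X$ and $Y$, hence is not in the domain of $f$), which is already assigned to $\overline{xa^*}$, so $\overline{b_ia^*}$ is again $f$-free. Either way you contradict the assumption, completing Case 2. For comparison, the paper's proof avoids your case split entirely: it starts from a \emph{non}-neighbour $y\in S_v$ of $x$ (which exists by Lemma~\ref{clm:everySuHasNonNeighbInSv}(d)), forces the preimage of $\overline{xy}$ to be an arc $\overrightarrow{zy}$ inside $S_v$, deduces $f(xb_i)=\overline{b_iz}$ for every $i$, and then derives a contradiction from the criticality of an edge $xw$ with $w\in N(b_i)\cap S_v$; your two-case route, once patched as above, is a legitimate alternative of comparable length.
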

\begin{proof}
	Let $x \in S_u$ (without loss of generality) be a source of the $f$-orientation, and let $b_1,\ldots,b_k$ be the out-neighbours of $x$ (recall that they all belong to $S_u$). Assume by contradiction that no $f$-free non-edge is incident with $N^+[x]$.
	
	By Lemma~\ref{clm:everySuHasNonNeighbInSv}(d), $x$ has a non-neighbour $y \in S_v$. Lemma~\ref{clm:arc} ensures that no $b_i$ is adjacent to $y$. The non-edge $\overline{xy}$ has a preimage by the function $f$, and this preimage is necessarily an arc $\overrightarrow{zy}$ with $z \in S_v$, since the $b_i$ are the only neighbours of $x$ in $S_u$ and none of those are adjacent to $y$.
	
	Let $i \in \{1,\ldots,k\}$. If $b_iz \in E$, then $z \in N(y) \cap N(b_i)$, and since no non-edge incident with a $b_i$ is $f$-free we necessarily have $f(zy)=\overline{b_iy}$, a contradiction to the fact that $f(zy)=\overline{xy}$. Thus, $\overline{b_iz}$ is a non-edge incident with $N^+[x]$, thus it has a preimage by the function $f$. This preimage is necessarily the edge $xb_i$.
	
	Furthermore, by Lemma~\ref{clm:everySuHasNeighbInSv}(c), $b_i$ has a neighbour $w \in S_v$. Lemma~\ref{clm:arc} ensures that $xw \in E$. Now, the edge $xw$ is critical. It cannot be critical for the pair $\{x,w\}$ since those vertices share $b_i$ as a common neighbour. It cannot be critical for a pair $\{x,t\}$ with $t \in S_v$ since by definition of critical edges we would have $b_it \notin E$, and one of the two non-edges $\overline{xt}$, $\overline{b_it}$ would be $f$-free, a contradiction. Thus, there is a $b_j \in N^+(x)$ such that $f(xb_j)=\overline{b_jw}$. However, Lemma~\ref{clm:arc} ensures that $N(b_j) \cap S_v = (N(x) \cap S_v) \setminus \{z\}$, which implies that $b_jw \in E$. This is a contradiction, which proves the claim.
\end{proof}

\subsection{Proof of Conjecture~\ref{conj:MS} for graphs with a dominating edge}\label{sec:MSproof}

We are now ready to re-prove Conjecture~\ref{conj:MS} for graphs with a dominating edge. The proof only uses the content of Sections~\ref{sec:prelim} and \ref{sec:lemmas-Puvnonempty} and Lemma~\ref{clm:arc}. More specifically, we will prove the following lemma. Observe that, to prove Conjecture~\ref{conj:MS}, we may assume that $G$ is non-bipartite (for otherwise the conjecture is true for $G$ by Lemma~\ref{clm:MSBound}). We use the $f$-orientation that was defined in Section~\ref{sec:lemmas-Puvempty}.

\begin{lemma}\label{lem:sol_of_conj1}
If $G$ is a non-bipartite D2C graph with a dominating edge, then $\free(f)\geq 1$.
\end{lemma}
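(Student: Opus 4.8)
The plan is to argue by contradiction: I assume $\free(f)=0$ and show that $G$ must be bipartite, contradicting the hypothesis that $G$ is non-bipartite. The first step is a clean reduction. Since $u$ is adjacent to every vertex of $X$ and $v$ is adjacent to every vertex of $Y$, and $G$ is connected (being of diameter $2$), any bipartition of $G$ must place all of $X$ on one side and all of $Y$ on the other; that is, $(X,Y)$ is the only candidate bipartition. Hence $G$ is bipartite if and only if there is no edge lying entirely inside $X$ and no edge lying entirely inside $Y$. So it suffices to prove that, when $\free(f)=0$, no such within-part edge exists. I will split the argument according to whether $P_{uv}$ is empty.

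Case $P_{uv}=\emptyset$. Here Lemma~\ref{clm:PuvEmptySuvEmpty} gives $S_{uv}=\emptyset$, so the partition is $X=\{v\}\cup S_u$ and $Y=\{u\}\cup S_v$, and every within-part edge lies inside $S_u$ or inside $S_v$. The two situations being symmetric under exchanging $u$ with $v$, I suppose for contradiction that there is an edge inside $S_u$, which yields an arc of the $f$-orientation. By Lemma~\ref{clm:arc}, every arc $\overrightarrow{ab}$ satisfies $N(a)\cap S_v=(N(b)\cap S_v)\cup\{t\}$, so the $S_v$-neighbourhood strictly shrinks along each arc; consequently the orientation restricted to $S_u$ is acyclic and possesses a sink $x$ with at least one in-neighbour. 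I then run a short sink analysis (re-deriving the content of Lemma~\ref{clm:sink} inline): by Lemma~\ref{clm:everySuHasNeighbInSv}, $x$ has a neighbour $y\in S_v$; if some in-neighbour $a_i$ of $x$ is non-adjacent to $y$, then $\overline{a_iy}$ is $f$-free, since its only possible preimage $a_ix$ would orient an edge \emph{out} of the sink $x$; otherwise all in-neighbours see $y$, so $xy$ cannot be critical for $\{x,y\}$, and analysing the criticality of $xy$ through Observation~\ref{obs:critical} produces a vertex $t\in S_v$ with $x\not\sim t$ and $a_i\not\sim t$ for every in-neighbour $a_i$. All the non-edges from $t$ to $N^-[x]$ then share the single candidate preimage $yt$, so all but one of them are $f$-free. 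Either way $\free(f)\geq1$, a contradiction.

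Case $P_{uv}\neq\emptyset$. Fix $p\in P_{uv}$. By Lemmas~\ref{clm:PuvSeesU} and~\ref{clm:Puv-Sv}, $p$ is non-adjacent to every vertex of $S_v$, so under $\free(f)=0$ each non-edge $\overline{px}$ with $x\in S_v$ must be covered by $f$; thus $S_v(p)=S_v$ in the notation of Lemma~\ref{clm:P_{uv}}. Parts (b) and (c) of that lemma then force $S_v$ and $N(p)\cap S_u=S_u(p)$ to be independent, and show that the edges between $S_u(p)$ and $S_v$ form exactly the matching $\{x\,m(x)\}$, each $m(x)$ having $x$ as its unique neighbour in $S_v$. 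Starting from these strong constraints, I again locate a within-part edge and derive a free non-edge by a criticality argument parallel to the sink case, contradicting $\free(f)=0$.

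The step I expect to be the main obstacle is precisely the extraction of the $f$-free non-edge from a within-part edge. In the case $P_{uv}=\emptyset$ the delicate point is the sink analysis: one must control all in-neighbours of the sink simultaneously and check that the non-edges produced can only be assigned to the single edge $yt$, so that injectivity of $f$ (Lemma~\ref{clm:assignIsInj}) forces at least one of them to be free. In the case $P_{uv}\neq\emptyset$ the work lies in the case distinction on the \emph{location} of the within-part edge --- inside $S_u$, between $S_u$ and $P_{uv}$, inside $P_{uv}$, involving $S_{uv}$, or of the form $v$--$S_{uv}$ --- and in showing that each possibility, combined with the matching structure coming from Lemma~\ref{clm:P_{uv}}, forces a free non-edge; this bookkeeping is the most technical part of the argument.
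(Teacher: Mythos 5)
Your Case $P_{uv}=\emptyset$ is correct and complete: deducing acyclicity of the $f$-orientation from Lemma~\ref{clm:arc} (under $\free(f)=0$ the quantity $|N(\cdot)\cap S_v|$ strictly drops along every arc) and then running the sink analysis of Lemma~\ref{clm:sink} on a terminal vertex is a legitimate variant of the paper's argument for this case, which instead proves a criticality claim, extends a maximal directed path, and excludes directed cycles via Lemma~\ref{clm:directed-cycle}. Your opening reduction (the only candidate bipartition is $(X,Y)$, so bipartiteness is equivalent to the absence of edges inside $X$ and inside $Y$) is also sound.

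The genuine gap is Case $P_{uv}\neq\emptyset$, where you give no argument at all: the sentence ``I again locate a within-part edge and derive a free non-edge by a criticality argument parallel to the sink case'' is a promise rather than a proof, and it is a promise that cannot be kept. The graph $H_5$ of Figure~\ref{fig-h5} is non-bipartite, D2C, and its dominating edge $uv$ produces exactly the configuration of your case: $P_{uv}=\{p\}$, $S_{uv}=\{s\}$, $|S_u|=|S_v|=1$. Yet for $H_5$ one has $\free(f)=0$: its only two edges inside $X$, namely the edge from $p$ to $m(x)\in S_u$ and the edge $vs$, are each critical for exactly one admissible pair, so $f$ maps them bijectively onto the only two non-edges between $X$ and $Y$. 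Hence no criticality argument can extract an $f$-free non-edge there, and the contradiction you aim for is unobtainable; read literally as ``$\free(f)=0$ is impossible,'' the statement fails for $H_5$. Note also that when $P_{uv}\neq\emptyset$ edges inside $X$ always exist (each vertex of $S_v$ needs a common neighbour with $p$, which Lemma~\ref{clm:Puv-Sv}(a) forces into $S_u$), so in this case bipartiteness is never available as the source of contradiction: your framework must produce a free non-edge directly, and $H_5$ blocks that.

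This is exactly why the paper's treatment of this case is a lengthy subcase analysis on $|S_{uv}|$, $|P_{uv}|$ and $|S_u|-|S_v|$, driven by Lemma~\ref{clm:MSBound} and Lemma~\ref{eq:Puv}: when that analysis reaches the $H_5$ configuration it does not (and cannot) exhibit a free non-edge, but instead disposes of it by counting edges ($H_5$ has $\lfloor n^2/4\rfloor-1$ of them), i.e.\ by securing the edge bound of the theorem that follows rather than the literal inequality $\free(f)\geq 1$. The ``bookkeeping'' you defer as merely technical is therefore the entire content of the lemma in this case, and carried out as you plan it, it would terminate at a configuration where your intended conclusion is false.
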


\begin{proof}
We assume by contradiction that $\free(f)=0$. We distinguish two cases.

\paragraph{Case 1: \boldmath{$P_{uv}$} is nonempty.} Note that $S_u\neq\emptyset$, for otherwise $G$ must be a star.
Since $\free(f)=0$, by the definition of $S_v(p)$ (see Lemma~\ref{eq:Puv}) we have $S_v=S_v(p)$ for each vertex $p$ of $P_{uv}$, and $|S_u|\geq |S_v|$ by Lemma~\ref{eq:Puv}(d). By Lemma~\ref{clm:MSBound}, if $||X|-|Y||\geq 3$, $G$ has at most $\left\lfloor(n^2-1)/4\right\rfloor-2$ edges, and we are done. Thus, we may assume that $||X|-|Y||\leq 2$. This leaves several possibilities depending on the structure of $G$:

\begin{enumerate}
	\item First, if $S_{uv}$ is nonempty, then $|S_{uv}|=1$, $P_{uv}=\{p\}$, and $|S_u|=|S_v|$. Thus, $S_u=S_u(p)$, $S_v = S_v(p)$ and the only edges between $S_u$ and $S_v$ are of the form $xm(x)$. This implies that $p$ is adjacent to all vertices of $S_u$ and that the vertex in $S_{uv}$ cannot be adjacent to any vertex $x \in S_u$ (otherwise the edge $ux$ would not be critical). It follows that the endpoints of any non-edge in $S_u(p) \times S_v(p)$ have no common neighbour, which is not possible. Thus, there is no such non-edge, which implies by Lemma~\ref{clm:P_{uv}} that $|S_u|=|S_v|=1$; but then the graph is $H_5$ which has exactly $\left\lfloor n^2/4\right\rfloor-1$ edges, a contradiction. Thus, we can assume that $S_{uv}$ is empty.
	\item Now, if $|S_u|=|S_v|$ (that is, $S_u=S_u(p)$ for every $p \in P_{uv}$), then for any $x\in S_u$, the edge $ux$ is not critical, a contradiction.
	\item Thus, there exists exactly one vertex $z$ in $S_u \setminus S_u(p)$ and exactly one vertex $p$ in $P_{uv}$. Since $\free(f)=0$, by Lemma~\ref{clm:P_{uv}}(c) and Lemma~\ref{eq:Puv}(d), $S_u(p)$ and $S_v(p)$ are independent sets. If $|S_u|=2$ (which implies $|S_v|=1$), then by Lemma~\ref{clm:everySuHasNeighbInSv}(c) $z$ is adjacent to the vertex in $S_v$. But then, the edge between $u$ and the vertex of $S_u(p)$ is not critical, a contradiction. Thus, $|S_u(p)|\geq 2$. Let $x,y$ be two distinct vertices of $S_v(p)$. 
	If $zm(x) \notin E$, then the non-edge $\overline{m(x)y}$ has no preimage by $f$, and thus it is $f$-free, a contradiction. Hence, $zm(x) \in E$ and $f(zm(x))=\overline{m(x)y}$, which implies that $z$ is adjacent to every vertex in $S_u(p)$ and in $S_v(p)$. However, the edges between $u$ and $S_u(p)$ are not critical, a contradiction.
\end{enumerate}

This study covers all possible cases, and we always reach a contradiction. This finsihes the proof of Case~$1$.

\paragraph{Case 2: \boldmath{$P_{uv}$} is empty.} Note that $S_{uv}=\emptyset$ by Lemma~\ref{clm:PuvEmptySuvEmpty}(b). Furthermore, there is at least one edge in $S_u$ (without loss of generality) since otherwise the graph is bipartite, a contradiction.
 We prove the following statement:

\begin{claim}
Claim: Let $\overrightarrow{xy}$ be an arc in $S_u$. For every vertex $t \in N(y) \cap S_v$, the edge $yt$ is critical for a pair $\{z,t\}$ with $z$ an out-neighbour of $y$.
\end{claim}
\begin{claim_proof}
	To prove the claim, observe first that the edge $yt$ cannot be critical for the pair $\{y,t\}$, since by Lemma~\ref{clm:arc} (which we can apply since we assume that $\free(f)=0$) we have $N(y) \cap S_v \subset N(x) \cap S_v$, and so $y$ and $t$ have $x$ as a common neighbour.
	
	Assume by contradiction that the edge $yt$ is critical for a pair $\{y,w\}$ with $w \in S_v$ (note that $w$ is an out-neighbour of $t$). We have $xw \notin E$ since otherwise $y$ and $w$ would have two common neighbours, a contradiction. Thus, we have the non-edge $\overline{xw}$. This non-edge has a preimage by the function $f$, but this preimage can only be $tw$ since $t \in N(w) \cap N(x)$. This implies that one of the two non-edges $\overline{xw}$, $\overline{yw}$ is $f$-free, a contradiction.
	
	Thus, the edge $yt$ is critical for a pair $\{z,t\}$, and since $\free(f)=0$ we necessarily have $f(yz)=\overline{zt}$ and so $z$ is an out-neighbour of $y$ by definition of the $f$-orientation. Hence, the claim follows.
	\end{claim_proof}

Now, take a maximal directed path $\overrightarrow{x_1,\ldots,x_k}$ of vertices in $S_u$. By Lemma~\ref{clm:everySuHasNeighbInSv}, $x_k$ has a neighbour $t \in S_v$. By the above claim, 
the edge $x_kt$ is critical for a pair $\{y,t\}$ with $y$ an out-neighbour of $x_k$. We cannot have $y=x_i$ for $i \in \{1,\ldots,k-2\}$ since otherwise we would have a directed cycle, and by Lemma~\ref{clm:directed-cycle} we would have $k-i+1$ $f$-free non-edges incident with vertices in the cycle, a contradiction. Thus, the directed path $\overrightarrow{x_1,\ldots,x_ky}$ is a directed path in $S_u$ with more vertices than $\overrightarrow{x_1,\ldots,x_k}$, a contradiction.

Hence, we have proved that if $\free(f)=0$ then we reach a contradiction, and the statement of the lemma follows.
\end{proof}

Hence, Lemma \ref{lem:sol_of_conj1} confirms Conjecture \ref{conj:MS} for D2C graphs with a dominating edge, i.e. we obtain a new proof of the following theorem.

\begin{theorem}[\cite{HW03,diam3,correction,Wang-arxiv}]
Any D2C graph $G$ of order $n$ with a dominating edge has at most $\lfloor n^2/4\rfloor$ edges, with equality if and only if $G$ is a balanced complete bipartite graph.
\end{theorem}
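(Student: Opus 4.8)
The plan is to derive the theorem as an immediate consequence of the machinery already assembled, namely Lemma~\ref{clm:MSBound} together with Lemma~\ref{lem:sol_of_conj1}. First I would recall that Lemma~\ref{clm:MSBound} gives, for any D2C graph $G$ with a dominating edge $uv$, the exact count $m=\left\lfloor\frac{n^2-\Delta^2}{4}\right\rfloor-\free(f)$, where $\Delta=\big||X|-|Y|\big|$, and in particular the bound $m\leq\left\lfloor n^2/4\right\rfloor-\free(f)$. Thus the upper bound $m\leq\lfloor n^2/4\rfloor$ is already established unconditionally, since $\free(f)\geq 0$. This settles the inequality part of the theorem with no further work.

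The remaining content is the equality characterization. Here I would argue in two directions. For the easy direction, if $G$ is a balanced complete bipartite graph $K_{\lceil n/2\rceil,\lfloor n/2\rfloor}$, then it has diameter~$2$, is D2C, possesses a dominating edge, and has exactly $\lfloor n^2/4\rfloor$ edges, so equality holds. For the hard direction, suppose $m=\lfloor n^2/4\rfloor$. From $m\leq\left\lfloor n^2/4\right\rfloor-\free(f)$ we read off that $\free(f)=0$ and moreover that $\Delta=0$ (so the partition into $X$ and $Y$ is balanced). Now Lemma~\ref{lem:sol_of_conj1} tells us that if $G$ were \emph{non-bipartite}, then $\free(f)\geq 1$; contrapositively, $\free(f)=0$ forces $G$ to be bipartite. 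A bipartite graph of diameter~$2$ is necessarily complete bipartite (as noted early in the paper), and with $\Delta=0$ it is balanced. Hence $G=K_{\lceil n/2\rceil,\lfloor n/2\rfloor}$, completing the characterization.

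The only genuinely substantive ingredient is Lemma~\ref{lem:sol_of_conj1}, whose proof is the long case analysis of Section~\ref{sec:MSproof} distinguishing $P_{uv}\neq\emptyset$ from $P_{uv}=\emptyset$; but since that lemma is already stated and available, the deduction of the theorem from it is purely bookkeeping. The main (minor) obstacle to watch out for is making the logical chain precise: equality in the Murty--Simon bound must be shown to force \emph{both} $\free(f)=0$ \emph{and} $\Delta=0$, and then one must invoke the non-bipartite $\Rightarrow\free(f)\geq 1$ direction of Lemma~\ref{lem:sol_of_conj1} in contrapositive form to conclude bipartiteness, rather than trying to characterize the extremal graphs directly. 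Once this is spelled out, the ``balanced complete bipartite'' conclusion follows from the general fact that diameter-$2$ bipartite graphs are complete bipartite.
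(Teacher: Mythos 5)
Your proposal is correct and is essentially the paper's own argument: the paper obtains this theorem precisely by combining Lemma~\ref{clm:MSBound} with Lemma~\ref{lem:sol_of_conj1} (non-bipartite implies $\free(f)\geq 1$), together with the facts that a bipartite diameter-$2$ graph is complete bipartite and that every edge of a complete bipartite graph is dominating. One small correction to your equality analysis: since $\Delta=\big||X|-|Y|\big|$ has the same parity as $n$, equality cannot force $\Delta=0$ when $n$ is odd; the identity $m=\left\lfloor (n^2-\Delta^2)/4\right\rfloor-\free(f)$ only forces $\Delta\leq 1$ (so $\Delta=0$ for even $n$ and $\Delta=1$ for odd $n$), which is exactly what ``balanced'' requires, so the rest of your deduction goes through unchanged.
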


\subsection{A stronger theorem when $P_{uv}=\emptyset$}\label{sec:strong-theorem}

In this section, we use the lemmas of Section~\ref{sec:lemmas-Puvempty} to prove the following.

\begin{theorem}\label{thm:strong}
  Let $G$ be a non-bipartite D2C graph with a dominating edge $uv$ such that $P_{uv}=\emptyset$, and let $f$ be the associated injective function. Let $\overrightarrow{D}$ be the graph induced by $S_x$ ($x\in\{u,v\}$) and oriented with respect to the $f$-orientation.
  Let $\mathcal C\cup \mathcal S$ be a collection of vertex-disjoint subgraphs of $\overrightarrow{D}$ satisfying the following conditions:
  \begin{enumerate}
  \item $\mathcal C$ consists of directed cycles;
  \item $\mathcal S$ consists of transitive triangles and graphs with a universal vertex that is either a sink or a source in $\overrightarrow{D}$.
  \end{enumerate}
Then, $G$ has at most $\left\lfloor n^2/4\right\rfloor-\sum_{C\in \mathcal C}|C|-|\mathcal S|$ edges.
\end{theorem}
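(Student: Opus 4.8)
The plan is to count $f$-free non-edges by localizing them to the prescribed subgraphs and then invoking Lemma~\ref{clm:MSBound}. Recall that Lemma~\ref{clm:MSBound} gives $m\leq\left\lfloor n^2/4\right\rfloor-\free(f)$, so it suffices to produce a lower bound $\free(f)\geq \sum_{C\in\mathcal C}|C|+|\mathcal S|$. The key structural point is that the subgraphs in $\mathcal C\cup\mathcal S$ are vertex-disjoint, so if I can attribute to each of them a \emph{disjoint} set of $f$-free non-edges whose size matches its contribution, the bound adds up. For the directed cycles this is immediate from Lemma~\ref{clm:directed-cycle}, which already gives $|\overrightarrow C|$ distinct $f$-free non-edges incident to each $C\in\mathcal C$. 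For the transitive triangles in $\mathcal S$, Lemma~\ref{clm:triangle} gives one $f$-free non-edge incident with the apex, and for the sink/source gadgets, Lemmas~\ref{clm:sink} and~\ref{clm:source} each yield one $f$-free non-edge in the closed in-/out-neighbourhood of the sink/source.

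First I would fix the collection $\mathcal C\cup\mathcal S$ and, for each member, invoke the appropriate lemma to certify the existence of the promised $f$-free non-edges: $|C|$ of them per cycle $C$, and exactly one per element of $\mathcal S$. The total count of \emph{certificates} is then $\sum_{C\in\mathcal C}|C|+|\mathcal S|$, so the only remaining task is to argue that all these certified non-edges are genuinely distinct, i.e. that no single $f$-free non-edge is double-counted across two members of the collection. Since the members are vertex-disjoint, each certified non-edge is incident with (a vertex of) exactly one member, which should force distinctness: a non-edge incident with the closed neighbourhood of one source cannot simultaneously be incident with a vertex of a disjoint cycle, and so on. I would phrase this carefully, because a non-edge has two endpoints and one endpoint might lie outside $\overrightarrow D$ (e.g. the non-edges from Lemma~\ref{clm:sink} run from $S_u$ to $S_v$); the relevant incidence to track is the endpoint that lies in the member itself.

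The main obstacle is precisely this disjointness bookkeeping. Within a single cycle, Lemma~\ref{clm:directed-cycle} already guarantees that its $|C|$ non-edges are mutually distinct, and within one source/sink gadget only one non-edge is claimed, so the only possible collision is \emph{between} two distinct members. For two disjoint cycles, a certified non-edge $\overline{x_j y}$ from Lemma~\ref{clm:directed-cycle} has the distinguished endpoint $x_j$ lying on its own cycle, hence cannot coincide with a non-edge certified by another, vertex-disjoint cycle. The subtler cases involve the sink and source gadgets, whose certified non-edges are incident with the whole closed in-/out-neighbourhood $N^-[x]$ or $N^+[x]$: one must verify that the certificate can be chosen to be incident with a vertex \emph{inside} the gadget (which the lemmas provide, since the sink/source and its in-/out-neighbours are part of the universal-vertex gadget by hypothesis), so that vertex-disjointness of the gadgets again separates the certificates. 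Once this is established, summing yields $\free(f)\geq\sum_{C\in\mathcal C}|C|+|\mathcal S|$, and Lemma~\ref{clm:MSBound} finishes the proof.

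A minor point I would keep in mind is the reduction at the very start: the theorem hypothesizes $P_{uv}=\emptyset$ and restricts $\overrightarrow D$ to a single side $S_x$, so by Lemma~\ref{clm:PuvEmptySuvEmpty} we have $S_{uv}=\emptyset$ and all the orientation lemmas of Section~\ref{sec:lemmas-Puvempty} apply verbatim. I would also note that the subgraphs being ``graphs with a universal vertex that is either a sink or a source'' is exactly the hypothesis needed to feed into Lemmas~\ref{clm:sink} and~\ref{clm:source}, and that transitive triangles feed into Lemma~\ref{clm:triangle}; no new structural analysis of $G$ is required beyond these already-proved lemmas, making the argument essentially a clean aggregation step.
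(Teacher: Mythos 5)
Your proposal is correct and takes essentially the same route as the paper: its entire proof consists of invoking Lemmas~\ref{clm:directed-cycle}, \ref{clm:triangle}, \ref{clm:sink} and~\ref{clm:source} to conclude $\free(f)\geq\sum_{C\in\mathcal C}|C|+|\mathcal S|$, and then applying Lemma~\ref{clm:MSBound}. The disjointness bookkeeping you spell out (in particular, that each certificate must be attachable to a vertex of its own member of the collection, with the sink/source gadgets read as containing the relevant closed in-/out-neighbourhood) is precisely what the paper's two-sentence proof leaves implicit.
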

\begin{proof}
  By Lemmas~\ref{clm:directed-cycle}, \ref{clm:triangle}, \ref{clm:sink} and \ref{clm:source}, we have $\free(f)\geq \sum_{C\in \mathcal C}|C|+|\mathcal S|$. Thus, the bound follows from Lemma~\ref{clm:MSBound}.
\end{proof}


Observe that each connected component of $\overrightarrow{D}$ contains either a directed cycle, or a source. Thus, by Lemmas~\ref{clm:directed-cycle} and~\ref{clm:source}, we have $\free(f)\geq c$, where $c$ is the number of nontrivial connected components of $\overrightarrow{D}$. This implies, by Lemma~\ref{clm:MSBound}, that $G$ has at most $\left\lfloor n^2/4\right\rfloor-c$ edges. But in fact, we can prove the following stronger result, which is crucial for the proof of Theorem~\ref{thm:main}.

\begin{theorem}\label{thm:strong2}
  Let $G$ be a non-bipartite D2C graph with a dominating edge $uv$ such that $P_{uv}=\emptyset$, and let $f$ be the associated injective function. Let $\overrightarrow{D}$ be the graph induced by $S_x$ ($x\in\{u,v\}$) and oriented with respect to the $f$-orientation, and let $c_1$ (resp. $c_2$) be the number of nontrivial connected components of diameter~2 (resp. at least~3) in $\overrightarrow{D}$. Then, $G$ has at most $\left\lfloor n^2/4\right\rfloor-c_1-2c_2$ edges.
\end{theorem}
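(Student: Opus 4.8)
The plan is to reduce everything, via Lemma~\ref{clm:MSBound}, to the single inequality $\free(f)\ge c_1+2c_2$; since that lemma gives $m\le \lfloor n^2/4\rfloor-\free(f)$ unconditionally, the stated bound then follows at once. Fix $\overrightarrow{D}$ to be the orientation induced on $S_u$ (the case of $S_v$ is symmetric). Every $f$-free non-edge produced by Lemmas~\ref{clm:directed-cycle},~\ref{clm:triangle},~\ref{clm:sink} and~\ref{clm:source} has its $S_u$-endpoint inside the feature it comes from and its other endpoint in $S_v$. As the nontrivial components of $\overrightarrow{D}$ are vertex-disjoint, two free non-edges whose $S_u$-endpoints lie in different components are automatically distinct. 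Hence it suffices to prove a per-component statement: each nontrivial component $K$ of diameter $2$ carries at least one incident $f$-free non-edge, and each $K$ of diameter at least $3$ carries at least two.

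The diameter-$2$ bound is the baseline already noted before the theorem: a nontrivial component either contains a directed cycle or, being acyclic, has a source, so Lemma~\ref{clm:directed-cycle} or Lemma~\ref{clm:source} supplies one incident free non-edge. For a diameter-$\ge 3$ component that contains a directed cycle $\overrightarrow{C}$, Lemma~\ref{clm:directed-cycle} already gives $|\overrightarrow{C}|\ge 3\ge 2$ incident free non-edges and we are done. Thus the entire difficulty concentrates in the case where $K$ is \emph{acyclic} and has diameter at least $3$, where I must manufacture a \emph{second} free non-edge beyond the one the baseline guarantees.

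For that case I would first sharpen Lemma~\ref{clm:sink}: its proof in fact yields one free non-edge per in-neighbour of the sink (with a fixed $S_v$-partner $w$, the non-edges joining $w$ to the sink and to each of its in-neighbours are all free except at most one). So if some sink $t$ has in-degree at least $2$, we immediately get two distinct free non-edges with $S_u$-endpoints in $N^-[t]\subseteq K$. Otherwise every sink has in-degree exactly $1$, and I would search for two \emph{features}---a source contributing a free non-edge inside $N^+[s]$, or a sink contributing one inside $N^-[t]$---whose $S_u$-supports are \emph{disjoint}; disjoint supports force the two non-edges to differ. The configurations to aim for are a source $s$ and a sink $t$ with $N^+[s]\cap N^-[t]=\emptyset$, or two sinks with disjoint closed in-neighbourhoods. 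The diameter hypothesis enters through the elementary observation that $N^+[s]\cap N^-[t]\neq\emptyset$ implies $d_K(s,t)\le 2$ (a common vertex $w$ gives a walk $s\to w\to t$); contrapositively, a source and a sink at distance at least $3$ have disjoint supports.

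The hard part will be proving that diameter at least $3$ always forces one of these disjoint configurations. This is delicate because a diameter-realising pair need not consist of a source and a sink, and because the free non-edges of Lemmas~\ref{clm:source}/\ref{clm:sink} are only localised to closed neighbourhoods that may overlap---for instance when several in-degree-$1$ sinks share their unique in-neighbour, or when the unique source and sink sit close together while the diameter is realised by interior vertices. I expect to resolve this by a short case analysis: pick $a,b\in K$ with $d_K(a,b)\ge 3$, so that the balls $N_K[a]$ and $N_K[b]$ are disjoint, and then argue---using acyclicity, the in-degree-$1$ normalisation of the sinks, and Lemma~\ref{clm:arc} to track how the $S_v$-neighbourhood changes along an arc---that each disjoint ball can be made to host a source or a sink whose support stays inside it. Checking that these cases are exhaustive, and that the two extracted non-edges never collapse into one, is the main obstacle; once it is cleared, summing the per-component counts over all components yields $\free(f)\ge c_1+2c_2$, and Lemma~\ref{clm:MSBound} gives the claimed edge bound.
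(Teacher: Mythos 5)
Your reduction via Lemma~\ref{clm:MSBound}, the per-component counting, and the treatment of components containing a directed cycle all match the paper's proof. The genuine gap is exactly where you say it is: the acyclic component of diameter at least~$3$. Your plan for that case --- find a source and a sink (or two sinks) whose closed out-/in-neighbourhoods in $S_u$ are disjoint, so that Lemmas~\ref{clm:source} and~\ref{clm:sink} yield two distinct $f$-free non-edges --- cannot be pushed through, because diameter at least~$3$ simply does not force such a disjoint configuration. Consider an acyclic component with a unique source $s$ and a unique sink $t$ of in-degree~$1$, say with arcs $s\to a\to b\to r$, $s\to r$, $r\to t$: the diameter is~$3$ (realised by the interior pair $a,t$), yet $N^+[s]\cap N^-[t]=\{r\}$, and there is no other source/sink pair to try. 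In any such configuration your method produces only one guaranteed free non-edge, and no choice of ``features'' repairs this; the paper's proof exists precisely because these overlapping configurations must be killed by a different mechanism. The paper assumes the component carries at most one $f$-free non-edge (incident with a vertex $r$), deduces that every source and sink lies in $N[r]$, then proves two structural claims (every sink has $r$ as its unique in-neighbour unless $r$ is itself a sink; the source is unique), and finally derives a contradiction from the diameter hypothesis by exhibiting a vertex $z$ that is simultaneously a successor of the source and a predecessor of $r$ yet adjacent to a forbidden vertex $y_1\in S_v$ --- an argument resting on criticality of specific edges and Lemma~\ref{clm:arc}, not on disjointness of neighbourhoods. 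This is the bulk of the proof, and nothing in your sketch substitutes for it.

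A secondary flaw: your proposed sharpening of Lemma~\ref{clm:sink} (``in-degree at least~$2$ gives two free non-edges'') does not follow from that lemma's proof. The proof splits into two branches: if some in-neighbour $a_i$ of the sink is non-adjacent to the chosen $y\in S_v$, one gets the single free non-edge $\overline{a_iy}$ and the argument stops; only in the branch where \emph{all} in-neighbours are adjacent to $y$ does the vertex $t$ appear and yield one free non-edge per in-neighbour. In the mixed case (some in-neighbours adjacent to $y$, exactly one not), a sink of in-degree~$2$ is only guaranteed a single free non-edge, so even your preliminary normalisation to in-degree-$1$ sinks is unsupported.
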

\begin{proof}
Without loss of generality, we assume that $\overrightarrow{D}$ is the oriented graph induced by $S_u$. Note that every nontrivial component in $\overrightarrow{D}$ contains either a directed cycle or a source and a sink. Thus, there is at least one $f$-free non-edge with one endpoint in $C$, which proves that there are at most $\left\lfloor n^2/4 \right\rfloor-c_1-c_2$ edges in $G$. We now assume that $C$ has diameter at least~3 in $\overrightarrow{D}$. We must show that $G$ contains at least two $f$-free non-edges with one endpoint in $C$, which will prove the theorem.

If there is a directed cycle in $C$, we are done by Lemma~\ref{clm:directed-cycle}. Thus, we assume that $\overrightarrow{D}[C]$ is acyclic. This implies that there is at least one source and at least one sink in $C$. Let $\mathcal S$ and $\mathcal T$ be the sets of sources and sinks of $C$, respectively. We assume by contradiction that there is at most one $f$-free non-edge incident with $C$.

We recall that $C$ is acyclic and thus both $\mathcal S$ and $\mathcal T$ are nonempty. By Lemmas~\ref{clm:sink} and~\ref{clm:source}, each source and each sink are at distance at most~$1$ from a vertex $r$ of $C$ incident with an $f$-free non-edge (this implies that there is exactly one $f$-free non-edge incident with $C$). Thus each vertex of $\mathcal S\cup\mathcal T)  \setminus\{r\}$ is adjacent to $r$; if $r\in\mathcal S$ then $\mathcal S=\{r\}$; if $r\in\mathcal T$ then $\mathcal T=\{r\}$. By Lemma~\ref{clm:triangle}, if $r\notin(\mathcal S\cup\mathcal T)$, there is no arc from any source to any sink.

We now prove a more constrained structure on $r$ and $\mathcal{T}$.

\begin{claim}\label{clm:2.1}
Claim 1: Either $r\in\mathcal T$, or $N^-(t) = \{r\}$ for all $t \in \mathcal T$.
\end{claim}
\begin{claim_proof}
By contradiction, assume that $r\notin\mathcal T$ but that some sink $t\in\mathcal T$ has two in-neighbours $r$ and $x$. By Lemma~\ref{clm:triangle}, $r \notin N^+(x)$, otherwise we have two $f$-free non-edges incident with $C$ (in particular, $x\notin\mathcal S$). By definition of the $f$-orientation, there is a vertex $y \in S_v$ such that $f(xt)=\overline{yt}$. Furthermore, by Lemma~\ref{clm:everySuHasNeighbInSv}(c), $t$ has a neighbour in $S_v$, which we will call $p$. We have $py \notin E$ and $xp \in E$ by Lemma~\ref{clm:arc}.

Assume first that $r \in N^-(x)$. Then $ry \notin E$, since $N(t) \cap N(y) = \{x\}$. If $\overline{ry}$ had a preimage by the function $f$, it would be the edge $xr$ and we would have the arc $\overrightarrow{xr}$, a contradiction. Thus, $\overline{ry}$ is the only $f$-free non-edge in $G$. This is depicted in Figure~\ref{fig-caseStudy1}. Now, the edge $tp$ is critical. It is not critical for the pair $\{t,p\}$ since $x \in N(t) \cap N(p)$. It is not critical for a pair $\{p,z\}$ with $z \in S_u$ since otherwise we would have the arc $\overrightarrow{zt}$ (since $t$ is a sink) and thus the non-edge $\overline{zp}$ would be $f$-free, a contradiction. It is not critical for a pair $\{t,z\}$ with $z \in S_v$, since this would imply that $xz \notin E$ and thus one of the two non-edges $\overline{tz}$, $\overline{xz}$ would be $f$-free (their only possible preimage by the function $f$ would be $pz$), a contradiction. Thus, $tp$ is not critical; this contradiction implies that $r \notin N(x)$.

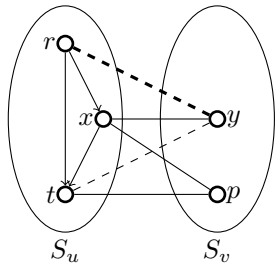
\begin{figure}[ht!]
	\centering
	\begin{tikzpicture}
	\node[jolinoeud] (x) at (0.5,1) {};
	\node[jolinoeud] (r) at (0,2) {};
	\node[jolinoeud] (t) at (0,0) {};
	\node[jolinoeud] (y) at (2,1) {};
	\node[jolinoeud] (p) at (2,0) {};
	\draw[->] (x)to(t);
	\draw[->] (r)to(x);
	\draw[->] (r)to(t);
	\draw[dashed] (t)to(y);
	\draw[dashed,very thick] (r)to(y);
	\draw (x)to(y);
	\draw (x)to(p);
	\draw (t)to(p);
	\draw (x) node[left] {$x$};
	\draw (r) node[left] {$r$};
	\draw (t) node[left] {$t$};
	\draw (p) node[right] {$p$};
	\draw (y) node[right] {$y$};
	\draw (0,1) ellipse (0.75 and 1.5);
	\draw (2,1) ellipse (0.75 and 1.5);
	\draw (0,-0.75) node {$S_u$};
	\draw (2,-0.75) node {$S_v$};
	\end{tikzpicture}
	\caption{If $r \in N^-(x)$, then the non-edge $\overline{ry}$ (depicted in bold), is the only $f$-free non-edge in $G$. By studying all possibilities for the criticality of the edge $tp$, we then reach a contradiction.}
	\label{fig-caseStudy1}
\end{figure}

The edge $xp$ is critical. It is not critical for the pair $\{x,p\}$ since $t \in N(x) \cap N(p)$. It is not critical for a pair $\{x,z\}$ with $z \in S_v$ since otherwise we would have $tz \notin E$ and thus one of the two non-edges $\overline{xz}$, $\overline{tz}$ would be $f$-free, a contradiction. Thus, it is critical for a pair $\{w,p\}$ with $w \in N^+(x)$. In particular, we necessarily have $wy \in E$ (since otherwise the non-edge $\overline{wy}$ would be $f$-free since its only possible preimage by the function would be $xw$ which is already assigned, a contradiction).

Now, let us reexamine the edge $tp$, which is critical. By the same reasoning that the one we held just above, it is critical for a pair $\{p,z\}$ with $z \in S_u$. Since $t$ is a sink, the non-edge $\overline{zp}$ is $f$-free (since otherwise we would have an arc $\overrightarrow{tz}$, a contradiction with the fact that $t$ is a sink), and thus $z=r$ and $\overline{zr}$ is the only $f$-free non-edge in $G$. Note that if $N(t) \cap S_v$ contains more than one vertex, then we can repeat the argument and find other $f$-free non-edges, a contradiction. Thus, $N(t) \cap S_v = \{p\}$. The construction we obtain is depicted in Figure~\ref{fig-caseStudy2}.

\begin{figure}[ht!]
	\centering
	\begin{tikzpicture}
	\node[jolinoeud] (x) at (0,2) {};
	\node[jolinoeud] (w) at (-0.5,1) {};
	\node[jolinoeud] (r) at (0.5,1) {};
	\node[jolinoeud] (t) at (0,0) {};
	\node[jolinoeud] (y) at (2,2) {};
	\node[jolinoeud] (p) at (2,0) {};
	\draw[->] (x)to(t);
	\draw[->] (x)to(w);
	\draw[->] (r)to(t);
	\draw[dashed] (t)to(y);
	\draw[dashed] (w)to(p);
	\draw[dashed,very thick] (r)to(p);
	\draw (x)to(y);
	\draw (w)to(y);
	\draw (x)to(p);
	\draw (t)to(p);
	\draw (x) node[left] {$x$};
	\draw (r) node[left] {$r$};
	\draw (w) node[below] {$w$};
	\draw (t) node[left] {$t$};
	\draw (p) node[right] {$p$};
	\draw (y) node[right] {$y$};
	\draw (0,1) ellipse (0.75 and 1.5);
	\draw (2,1) ellipse (0.75 and 1.5);
	\draw (0,-0.75) node {$S_u$};
	\draw (2,-0.75) node {$S_v$};
	\end{tikzpicture}
	\caption{If $r \notin N(x)$, then the non-edge $\overline{rp}$ (depicted in bold), is the only $f$-free non-edge in $G$. By studying all possibilities for the criticality of the edge $wy$, we then reach a contradiction.}
	\label{fig-caseStudy2}
\end{figure}
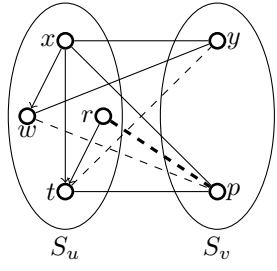

The edge $wy$ is critical. It is not critical for the pair $\{w,y\}$ since $x \in N(w) \cap N(y)$. It is not critical for a pair $\{w,z\}$ with $z \in S_v$ since otherwise we would have $xz \notin E$ and thus one of the two non-edges $\overline{xz}$, $\overline{wz}$ would be $f$-free, a contradiction. Thus, it is critical for a pair $\{y,z\}$ with $z \in S_u$. If $z \in N^-(w)$ then $\overline{zy}$ is $f$-free, which implies that $z=r$, but since $y \neq p$ we have two $f$-free non-edges incident with $C$, a contradiction. Thus, $z \in N^+(w)$ and $zp \notin E$ (since otherwise this would contradict the fact that $f(xw)=\overline{wp}$). By Lemma~\ref{clm:everySuHasNeighbInSv}(c), $z$ has a neighbour $y' \in S_v$. Recall that $N(t) \cap S_v = \{p\}$ and thus $ty' \notin E$. Furthermore, by Lemma~\ref{clm:arc}, $x$ has exactly one neighbour in $S_v$ that is not a neighbour of $t$. This neighbour is $y$, which implies that $xy' \notin E$. By applying the same argument, we also get $wy' \notin E$. But, since we have the arc $\overrightarrow{wz}$, the non-edge $\overline{wy'}$ is $f$-free, a contradiction.

Thus, such an $x$ does not exist, which proves the claim.
\end{claim_proof}

We can now prove that there is exactly one source in $C$.

\begin{claim}\label{clm:one-source}
Claim 2: $|\mathcal S|=1$.
\end{claim}
\begin{claim_proof}
Assume by contradiction that there are two sources $s_1,s_2 \in \mathcal{S}$. By Lemma~\ref{clm:source}, if the vertex $r$ is not in $N^+(s_1) \cap N^+(s_2)$ then there is an $f$-free non-edge incident with one vertex that is not $r$, a contradiction. Thus, $s_1$ and $s_2$ are both in-neighbours of $r$. By definition, there are two distinct vertices $y_1,y_2 \in S_v$ such that $f(s_1r)=\overline{ry_1}$ and $f(s_2r)=\overline{ry_2}$. Furthermore, $s_1y_2$ and $s_2y_1$ cannot be edges, and since $r$ is not a source, these non-edges have to be assigned by $f$. The non-edge $\overline{s_1y_2}$ cannot be assigned to an edge $s_1x$ with $x$ a neighbour of $s_1$ in $S_u$ since $s_1$ is a source and $x$ would be an in-neighbour of $s_1$.

First, assume that we have two distinct vertices $z_1,z_2 \in S_v$ such that $f(y_1z_1)=\overline{s_2y_1}$ and $f(y_2z_2)=\overline{s_1y_2}$. The vertices $z_1$ and $z_2$ cannot be adjacent to $r$ since they are neighbours with respectively $y_1$ and $y_2$. But since $s_1$ is adjacent to $z_2$ and $s_2$ to $z_1$, both non-edges $\overline{rz_1},\overline{rz_2}$ are $f$-free, a contradiction.

Thus, there is a vertex $z \in S_v$ such that $f(y_1z)=\overline{s_2y_1}$ and $f(y_2z)=\overline{s_1y_2}$. By the same argument, $z$ cannot be adjacent to $r$, and the non-edge $\overline{rz}$ is $f$-free. This construction is depicted in Figure~\ref{fig-caseStudy3}.

\begin{figure}[ht!]
	\centering
	\begin{tikzpicture}
	\node[jolinoeud] (s1) at (0,2) {};
	\node[jolinoeud] (s2) at (0,1) {};
	\node[jolinoeud] (r) at (0,0) {};
	\node[jolinoeud] (y1) at (2,2) {};
	\node[jolinoeud] (y2) at (2,1) {};
	\node[jolinoeud] (z) at (2,0) {};
	\draw (s1) node[left] {$s_1$};
	\draw (s2) node[left] {$s_2$};
	\draw (r) node[left] {$r$};
	\draw (y1) node[right] {$y_1$};
	\draw (y2) node[right] {$y_2$};
	\draw (z) node[right] {$z$};
	\draw[->,bend right=45] (s1)to(r);
	\draw[->] (s2)to(r);
	\draw[->,bend right=45] (z)to(y1);
	\draw[->] (z)to(y2);
	\draw (s1)to(y1);
	\draw (s2)to(y2);
	\draw (s2)to(z);
	\draw (s2)to(z);
	\draw[dashed] (s1)to(y2);
	\draw[dashed] (s2)to(y1);
	\draw[dashed] (r)to(y2);
	\draw[dashed] (r)to(y1);
	\draw[dashed,very thick] (r)to(z);
	\draw (0,1) ellipse (0.75 and 1.5);
	\draw (2,1) ellipse (0.75 and 1.5);
	\draw (0,-0.75) node {$S_u$};
	\draw (2,-0.75) node {$S_v$};
	\end{tikzpicture}
	\caption{The vertices $s_1$ and $s_2$ are sources, and $\overline{rz}$ is the only $f$-free non-edge in $G$.}
	\label{fig-caseStudy3}
\end{figure}
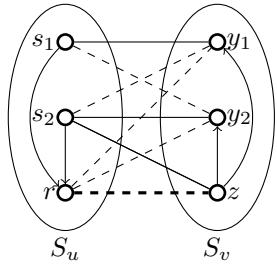

Now, by Lemma~\ref{clm:everySuHasNeighbInSv}(c), $r$ has a neighbour $y_3$ in $S_v$. The vertex $y_3$ is adjacent to $s_1$ and $s_2$ since otherwise the non-edges would be $f$-free. Now, the edge $ry_3$ is critical (note that it is not critical for the pair $\{r,y_3\}$ since they share $s_1$ and $s_2$ as common neighbours). Assume it is critical for a pair $\{r,y_4\}$ with $y_4 \in S_v$. The vertex $y_4$ cannot be $z$ since $r$ and $z$ already have two common neighbours, and it cannot be $y_1$ or $y_2$ since this would contradict $f(s_1r)=\overline{ry_1}$ and $f(s_2r)=\overline{ry_2}$.
However, this implies that $y_4$ is not adjacent to both $s_1$ and $s_2$, and the non-edges $\overline{s_1y_4}$ and $\overline{s_2y_4}$ are $f$-free, a contradiction.

Thus, the edge $ry_3$ is critical for a pair $\{t,y_3\}$ with $t \in S_u$. Moreover, $t \in \mathcal{T}$ since $t$ is an out-neighbour of $r$. By Claim~1, 
$r$ is the unique in-neighbour of $t$. Now, by Lemma~\ref{clm:everySuHasNeighbInSv}(c), $t$ has a neighbour in $S_v$. It cannot be $y_1$ or $y_2$ since $s_1$ and $s_2$ are the unique common neighbours of those and $r$. It cannot be $z$ since this would imply $f(y_1z)=\overline{y_1t}$ and $f(y_2z)=\overline{y_2t}$, a contradiction with $f(y_1z)=\overline{s_2y_1}$ and $f(y_2z)=\overline{s_1y_2}$.

Now, the non-edge $\overline{ty_1}$ has a preimage by the function $f$. This preimage cannot be an edge $xt$ with $x \in S_u$ since $t$ only has $r$ as an in-neighbour, and $ry_1 \notin E$. So there exists a vertex $y_4 \in S_v$ such that $f(y_1y_4)=\overline{ty_1}$. By our previous discussion, $y_4 \notin \{y_2,y_3,z\}$. But now $r$ and $y_4$ cannot be adjacent since $r$ and $y_1$ have $s_1$ as unique common neighbour, which implies that the non-edge $\overline{ry_4}$ is $f$-free, a contradiction that completes this proof. The construction is depicted in Figure~\ref{fig-caseStudy4}.
\end{claim_proof}

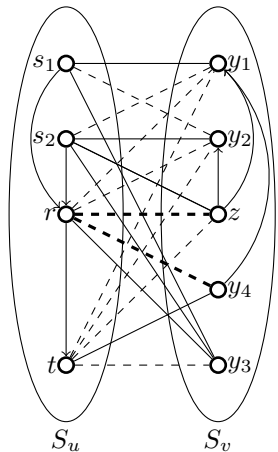
\begin{figure}[ht!]
	\centering
	\begin{tikzpicture}
	\node[jolinoeud] (s1) at (0,2) {};
	\node[jolinoeud] (s2) at (0,1) {};
	\node[jolinoeud] (r) at (0,0) {};
	\node[jolinoeud] (t) at (0,-2) {};
	\node[jolinoeud] (y1) at (2,2) {};
	\node[jolinoeud] (y2) at (2,1) {};
	\node[jolinoeud] (z) at (2,0) {};
	\node[jolinoeud] (y3) at (2,-2) {};
	\node[jolinoeud] (y4) at (2,-1) {};
	\draw (s1) node[left] {$s_1$};
	\draw (s2) node[left] {$s_2$};
	\draw (r) node[left] {$r$};
	\draw (t) node[left] {$t$};
	\draw (y1) node[right] {$y_1$};
	\draw (y2) node[right] {$y_2$};
	\draw (z) node[right] {$z$};
	\draw (y3) node[right] {$y_3$};
	\draw (y4) node[right] {$y_4$};
	\draw[->,bend right=45] (s1)to(r);
	\draw[->] (s2)to(r);
	\draw[->] (r)to(t);
	\draw[->,bend right=45] (z)to(y1);
	\draw[->,bend right=45] (y4)to(y1);
	\draw[->] (z)to(y2);
	\draw (s1)to(y1);
	\draw (s2)to(y2);
	\draw (s2)to(z);
	\draw (s2)to(z);
	\draw (s1)to(y3);
	\draw (s2)to(y3);
	\draw (r)to(y3);
	\draw (t)to(y4);
	\draw[dashed] (s1)to(y2);
	\draw[dashed] (s2)to(y1);
	\draw[dashed] (r)to(y2);
	\draw[dashed] (r)to(y1);
	\draw[dashed] (t)to(y1);
	\draw[dashed] (t)to(y2);
	\draw[dashed] (t)to(z);
	\draw[dashed] (t)to(y3);
	\draw[dashed,very thick] (r)to(z);
	\draw[dashed,very thick] (r)to(y4);
	\draw (0,0) ellipse (0.75 and 2.75);
	\draw (2,0) ellipse (0.75 and 2.75);
	\draw (0,-3) node {$S_u$};
	\draw (2,-3) node {$S_v$};
	\end{tikzpicture}
	\caption{If there are two sources in $C$, then $G$ has at least two $f$-free non-edges incident with $C$. Here, these two $f$-free non-edges are $\overline{rz}$ and $\overline{ry_4}$.}
	\label{fig-caseStudy4}
\end{figure}

Now, by Claim~2, 
there is a unique source $s$ in $C$. By Claim~1, 
 $s\neq r$, and furthermore, since $C$ has diameter at least~$3$, $s$ has an out-neighbour $x$ that is not $r$.

By Lemma~\ref{clm:everySuHasNonNeighbInSv}(d), $s$ has a non-neighbour $y_1$ in $S_v$. Since $s\neq r$, the non-edge $\overline{sy_1}$ has a preimage by the function $f$. This preimage cannot be an edge $sz$ with $z$ a neighbour of $s$ in $S_u$: indeed, since $s$ is a source, $z$ would be an out-neighbour of $s$, but then $\overline{sy_1}$ would be $f$-free. Thus, we have $\overline{sy_1}=f(y_1y_2)$, for some in-neighbour of $y_1$ in $S_v$. Note that, by Lemma~\ref{clm:arc}, we have $N(x) \cap S_v = (N(s) \cap S_v) \setminus \{y_2\}$.
In particular, $xy_1,xy_2 \notin E$.

Now, the non-edge $\overline{xy_1}$ has a preimage by the function $f$. This preimage cannot be an edge $y_1z$ with $z \in S_v$, since otherwise we would have $sz \in E$ and thus $\overline{sy_1}$ would be $f$-free, a contradiction. Thus, there exists $z \in S_u$ an in-neighbour of $x$ such that $f(zx)=\overline{xy_1}$. In particular, we have $z\neq r$ (since otherwise $s$, $r$ and $x$ induce a transitive triangle, and Lemma~\ref{clm:triangle} implies that an $f$-free non-edge is incident with $s$, a contradiction).
This construction is depicted in Figure~\ref{fig-caseStudy5}.

\begin{figure}[ht!]
	\centering
	\begin{tikzpicture}
	\draw (1,1) ellipse (1.5 and 1.75);
	\draw (5,1) ellipse (1 and 1.5);
	\draw (1,-1) node {$S_u$};
	\draw (5,-1) node {$S_v$};
	\node[jolinoeud] (s) at (1,2) {};
	\node[jolinoeud] (r) at (0,1) {};
	\draw[->] (s)to(r);
	\draw (s) node[above=0.1] {$s$};
	\draw (r) node[left=0.1] {$r$};
	\node[jolinoeud] (x) at (2,1) {};
	\draw (x) node[left=0.1] {$x$};
	\draw[->] (s)to(x);
	\node[jolinoeud] (y1) at (5,0) {};
	\draw (y1) node[right=0.1] {$y_1$};
	\draw[dashed] (s)to node[very near end](m1){} (y1);
	\draw[dashed] (x)to node[very near start](m3){} (y1);
	\node[jolinoeud] (y2) at (5,2) {};
	\draw (y2) node[right=0.1] {$y_2$};
	\draw[->] (y2)to node[near end](m2){} (y1);
	\draw (s)to(y2);
	\draw[dashed] (x)to(y2);
	\node[jolinoeud] (z) at (1,0) {};
	\draw (z) node[below=0.1] {$z$};
	\draw[->] (z)to node[near end](m4){} (x);
	\draw (z)to(y1);
	\draw[->,decorate,decoration={snake,amplitude=.7mm}] (z)to(r);
	\draw[->,decorate,decoration={snake,amplitude=.7mm}] (s)to(z);
	\end{tikzpicture}
	\caption{The structure of $C$ is very constrained: there is a unique source $s$, which has an out-neighbour $x$ distinct from $r$. The snake-like arcs represent a directed path of any length: $z$ is an in-neighbour of $r$ and an out-neighbour of $s$ due to the fact that $C$ is acyclic.}
	\label{fig-caseStudy5}
\end{figure}
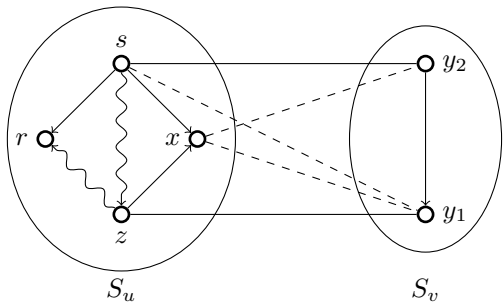

However, Lemma~\ref{clm:arc} implies that no successor of $s$ and predecessor of $r$ can be adjacent to $y_1$.\footnote{A \emph{predecessor} of a vertex $a$ is a vertex $b$ such that there is a directed path from $b$ to $a$. Similarly, a \emph{successor} of $a$ is a vertex $b$ such that there is a directed path from $a$ to $b$.} But since $s$ is the only source in $C$ (by Claim~2) 
and $r$ is the only in-neighbour of all sinks in $C$ (by Claim~1, 
$z$ is a successor of $s$ and a predecessor of $r$. Since $zy_1 \in E$, we reach a contradiction.

Thus, there are at least two $f$-free non-edges incident with vertices of $C$, which completes the proof.
\end{proof}

\subsection{Proof of Theorem~\ref{thm:main}}\label{sec:mainproof}

We are now ready to prove Theorem~\ref{thm:main}, which we recall here.\\

\noindent
{\bf Theorem 4.} {\it 
Let $G$ be a non-bipartite D2C graph with $n$ vertices having a dominating edge. If $G$ is not $H_5$, then $G$ has at most $\left\lfloor n^2/4\right\rfloor-2$ edges.}

\begin{proof}
Consider the function $f$ defined previously. We have seen in Section~\ref{sec:MSproof} that $\free(f)\geq 1$. We need to prove that $\free(f)\geq 2$. Thus, by contradiction, let us assume that $\free(f)=1$.

\paragraph{Case 1: \boldmath{$P_{uv}$} is empty.}
By Lemma~\ref{clm:everySuHasNeighbInSv}(b), $S_{uv}= \emptyset$. Since $G$ is non-bipartite, there must be at least one edge inside $S_u$ or $S_v$. Without loss of generality, assume it is in $S_u$. Let $\overrightarrow{D}$ be the graph induced by $S_u$ and oriented with respect to the $f$-orientation; it has at least one nontrivial connected component. If there are more than one nontrivial components, then the desired bound follows from Theorem~\ref{thm:strong}. Hence, we assume that there is exactly one nontrivial component $C$ in $\overrightarrow{D}$. If $C$ has diameter at least~3, then the desired bound follows from Theorem~\ref{thm:strong2}. Thus, we assume that $C$ has diameter at most~2.

Since $C$ is nontrivial, there is at least an arc $\overrightarrow{x_1x_2}$ in $C$. Let $f(x_1x_2)=\overline{x_2y_1}$ for some vertex $y_1 \in N(x_1) \cap S_v$. By Lemma~\ref{clm:everySuHasNeighbInSv}(c), the vertex $x_2$ has also a neighbour $y_2 \in S_v$. Now, the edge $ux_1$ is critical. It cannot be critical for the pair $\{u,x_1\}$ since $u$ and $x_1$ have $x_2$ as a common neighbour, or for a pair $\{u,x\}$ for $x \in S_u$ since $ux \in E$, or for a pair $\{u,y\}$ for $y \in S_v$ since $u$ and $y$ have $v$ as a common neighbour, or for a pair $\{x,y\}$ with $y \in S_v$ because $uy \notin E$. So there is a vertex $x_3 \in S_u$ such that $ux_1$ is critical for the pair $\{x_1,x_3\}$. Furthermore, we have $x_3 \not\in C$ since otherwise $x_1$ and $x_3$ would be at distance at most~$2$, which is not possible by the criticality of $ux_1$. So $x_3$ is independent in $S_u$. Applying the same reasoning, we get a vertex $x_4 \in S_u$, independent in $S_u$, such that $N(x_2) \cap N(x_4) = \{u\}$ (note that we can have $x_3=x_4$).

Since $N(x_1) \cap N(x_3) = \{u\}$ and $x_3$ is independent in $X$, $x_3y_1 \notin E$ and $x_3$ and $y_1$ have a common neighbour $y_3 \in S_v$. But now at least one of the two non-edges $\overline{x_1y_3}$ and $\overline{x_3y_1}$ is $f$-free. Indeed, otherwise we would have two distinct edges $e$ and $e'$ such that $f(e)=\overline{x_1y_3}$ and $f(e')=\overline{x_3y_1}$, which would imply that $e=y_1y_3=e'$, a contradiction. This construction is depicted in Figure~\ref{fig-compDiam2}. Applying the same reasoning, we get a vertex $y_4 \in S_v$ as common neighbour of $x_4$ and $y_2$ (note that we can have $y_3=y_4$), and at least one $f$-free non-edge among $\overline{x_2y_4}$ and $\overline{x_4y_2}$. Since all four non-edges are distinct (this is because $x_1 \neq x_2$, $x_1 \neq x_4$, $x_2 \neq x_3$, $y_1 \neq y_2$, $y_1 \neq y_3$ and $y_2 \neq y_4$), we have two $f$-free non-edges in $G$, a contradiction.

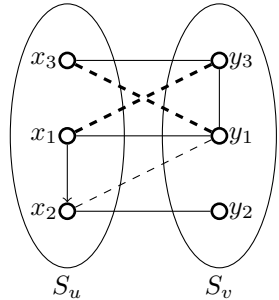
\begin{figure}[ht!]
	\centering
	\begin{tikzpicture}
	\node[jolinoeud] (x1) at (0,1) {};
	\node[jolinoeud] (x2) at (0,0) {};
	\node[jolinoeud] (x3) at (0,2) {};
	\node[jolinoeud] (y1) at (2,1) {};
	\node[jolinoeud] (y2) at (2,0) {};
	\node[jolinoeud] (y3) at (2,2) {};
	\draw[->] (x1)to(x2);
	\draw (y1)to(y3);
	\draw (x1)to(y1);
	\draw (x2)to(y2);
	\draw (x3)to(y3);
	\draw[dashed] (x2)to(y1);
	\draw[dashed,very thick] (x3)to(y1);
	\draw[dashed,very thick] (x1)to(y3);
	\draw (x1) node[left] {$x_1$};
	\draw (x2) node[left] {$x_2$};
	\draw (x3) node[left] {$x_3$};
	\draw (y1) node[right] {$y_1$};
	\draw (y2) node[right] {$y_2$};
	\draw (y3) node[right] {$y_3$};
	\draw (0,1) ellipse (0.75 and 1.75);
	\draw (2,1) ellipse (0.75 and 1.75);
	\draw (0,-1) node {$S_u$};
	\draw (2,-1) node {$S_v$};
	\end{tikzpicture}
	\caption{If the only nontrivial component in $S_u$ has diameter at most~2, one of the two bolded non-edges is $f$-free. Applying the same construction for $x_2$ gives us again one of two non-edges which are $f$-free.}
	\label{fig-compDiam2}
\end{figure}

\paragraph{Case 2: \boldmath{$P_{uv}$} is nonempty.}
 Since $\free(f)=1$, by Lemma~\ref{clm:P_{uv}}(d), $|S_u|\geq |S_v|-1$. As in the proof of Lemma~\ref{clm:P_{uv}}, we let $\free(P_{uv},f)$ be the number of $f$-free non-edges incident with a vertex of $P_{uv}$ (here $\free(P_{uv},f)\in\{0,1\}$). We consider the following subcases.\\

\noindent
\emph{Case 2.1:} Suppose first that the unique $f$-free non-edge is incident with a vertex $p$ of $P_{uv}$, say it is $\overline{pt}$ for some vertex $t\in S_v$. Consider the sets $S_u(p)$ and $S_v(p)$ and the function $m$ as defined in Lemma~\ref{clm:P_{uv}}; by Lemma~\ref{clm:P_{uv}}(c), $S_u(p)$ and $S_v(p)$ are independent sets. By Lemma~\ref{clm:MSBound}, if $||X|-|Y||\geq 2$, then $G$ has at most $\left\lfloor n^2/4\right\rfloor-2$ edges, and we are done. Thus, we assume that $||X|-|Y||\leq 1$. Recall that by Lemma~\ref{eq:Puv}(d), we have $|S_u|\geq |S_v|-1$. Then 
\[
|X| = |S_u|+|S_{uv}|+|P_{uv}|+1 \ge |S_v|+|S_{uv}|+|P_{uv}| = |Y|-1 +|S_{uv}|+|P_{uv}|.
\]
Since $||X|-|Y||\leq 1$, it follows that $|S_{uv}|+|P_{uv}| \in \{1,2\}$, and thus either $|S_{uv}|=|P_{uv}| = 1$ or  $S_{uv} = \emptyset$ and $|P_{uv}| \in \{1,2\}$. We distinguish between these cases.
	\begin{enumerate}
		\item Suppose that $|S_{uv}|=|P_{uv}| = 1$. Then $P_{uv}=\{p\}$ and $|S_u|=|S_v|-1$. The latter implies $S_u=S_u(p)$, and again (as in the case $\free(f)=0$) the vertex in $S_{uv}$ cannot be adjacent to any vertex in $S_u$. Thus, for any non-edge $\overline{xy}$ in $S_u(p)\times S_v(p)$, the only possible common neighbour of $x$ and $y$ is $t$. Thus, if $|S_u(p)|=|S_v(p)|\geq 2$, $t$ must be adjacent to all vertices in $S_u(p)$ and $S_v(p)$. Observe that, if $vt$ is critical for the pair $\{s,t\}$ with $S_{uv} = \{s\}$, then $s$ has no neighbour in $S_v$ (since $t$ is adjacent to every other vertex in $S_v$), and thus all but one of the non-edges between $s$ and $S_v$ are $f$-free (their only possible preimage by $f$ is $sv$). But this is the only pair for which the edge $vt$ can be critical, thus we reach a contradiction.  Furthermore, if $|S_u(p)|=|S_v(p)|=1$, then the vertices in $S_v$ cannot be adjacent to the vertex in $S_{uv}$ since otherwise the edges between $v$ and $S_v$ would not be critical, and the graph has at least two free non-edges ($\overline{pt}$ and one of the non-edges between $S_v$ and $S_{uv}$, which can both only be assigned to the unique edge between $v$ and $S_{uv}$), a contradiction. Since $S_u(p) = S_u \neq \emptyset$, we have finished this case.
		\item Suppose that  $S_{uv} = \emptyset$.  If $S_u=S_u(p)$, then, as in the case $\free(f)=0$, for any $x\in S_u$, the edge $ux$ would not be critical, a contradiction. Thus, we can assume that there is a vertex $w \in S_u \setminus S_u(p)$. Then $P_{uv}=\{p\}$ and $|S_u| = |S_v|$. If $|S_u|=2$, then the vertex in $S_u(p)$ is adjacent to $t$ (since $p$ and $t$ are at distance~2 and $\overline{pt}$ is $f$-free). However, by Lemma~\ref{clm:everySuHasNeighbInSv}(c), $w$ has a neighbour in $S_v$. Thus, $w$ and the vertex in $S_u(p)$ have two common neighbours ($u$ and a vertex in $S_v$), and thus the edge between $u$ and the vertex in $S_u(p)$ is not critical, a contradiction. This implies that there are at least two vertices in $S_u(p)$. Furthermore, $pw \notin E$ (since otherwise this edge would not be critical). Since $\overline{pt}$ is $f$-free, there is a vertex $m(x) \in S_u(p)$ (for a certain $x \in S_v(p)$) such that $m(x)t \in E$ (since otherwise, either $G$ would have diameter~3, or $N(t) \cap N(p) = \{w\}$, two contradictions). Assume first that there exists a vertex $m(y) \in S_u(p)$ (for a certain $y \in S_v(p)$) such that $m(y)t \notin E$. Then the non-edge $\overline{m(y)t}$ must have a preimage by the function $f$, and there are two possibilities. First, if either $f(yt)=\overline{m(y)t}$ or $f(m(x)m(y))=\overline{m(y)t}$, then by Lemma~\ref{clm:P_{uv}}(c) the non-edge $\overline{m(x)y}$ is $f$-free, a contradiction. Now, if $f(m(y)w)=\overline{m(y)t}$ then $wy \in E$ (since otherwise it would be an $f$-free non-edge) and $yt \notin E$. This implies that the non-edge $\overline{m(x)y}$, which is not $f$-free, can only have $m(x)w$ as a preimage by $f$ (since $S_u(p)$ and $S_v(p)$ are independent sets), and thus $wx \in E$ (since otherwise it would be an $f$-free non-edge). However, all this implies that the non-edge $\overline{m(y)x}$ is $f$-free, a contradiction. Thus, $t$ is adjacent to all vertices in $S_u(p)$. However, $t$ is adjacent to no vertex in $S_v(p)$ (since otherwise the edge $vt$ would not be critical), and thus we necessarily have $f(m(x)w)=\overline{m(x)y}$ for all non-edges $\overline{m(x)y} \in S_u(p) \times S_v(p)$, which in turn implies that $w$ is adjacent to all vertices in $S_u(p)$ and $S_v(p)$. Furthermore, $wt \in E$ (since otherwise it would be an $f$-free non-edge: $S_u(p) \subseteq N(w) \cap N(t)$ and $|S_u(p)| \geq 2$). However, $wt$ is not critical, a contradiction.
	\end{enumerate}
	Hence, we have shown that the $f$-free non-edge is not incident with $p$.\\
	
\noindent
\emph{Case 2.2:}	Suppose that the unique $f$-free non-edge is incident with a vertex of $S_{uv}$. Then, $S_{uv}\neq\emptyset$ and $\free(P_{uv},f)=0$. Thus, by Lemma~\ref{eq:Puv}(d), we have $|S_u|\geq |S_v|$. But then, $||X|-|Y||=(|S_u|+|P_{uv}|+|S_{uv}|)-|S_v|\geq 2$, and by Lemma~\ref{clm:MSBound}, we deduce that $G$ has at most $\lfloor n^2/4 \rfloor-2$ edges, a contradiction.\\

\noindent
\emph{Case 2.3:} Finally, assume that the unique $f$-free non-edge is in $S_u\times S_v$. Then, since $\free(P_{uv},f)=0$, by Lemma~\ref{clm:P_{uv}}(d), we deduce that $S_v=S_v(p)$, and $|S_u|\geq |S_v|$. By Lemma~\ref{clm:P_{uv}}(c), $S_v = S_v(p)$ is an independent set. Once again, if $||X|-|Y||\geq 2$, by Lemma~\ref{clm:MSBound}, $G$ has at most $\lfloor n^2/4 \rfloor-2$ edges, and we are done. Thus, there is no vertex in $S_u\setminus S_u(p)$, $P_{uv}=\{p\}$, and $S_{uv}=\emptyset$. Since the $f$-free non-edge is in $S_u \times S_v$, we know that $|S_u(p)|,|S_v(p)|\geq 2$. But now all the edges $ux$ with $x\in S_u(p)$ are not critical, since any two vertices of $S_u$ have both $u$ and $p$ as common neighbours. This is a contradiction.

The above study covers all possible cases, and we always reach a contradiction. Thus, if $P_{uv}\neq\emptyset$, then there are at least two $f$-free non-edges, a contradiction which completes the proof of Theorem~\ref{thm:main}.
\end{proof}

\section{Characterizing D2C graphs of order $n$ with maximum degree $n-2$}\label{sec:highdegree}

It is not difficult to observe that the only D2C graphs of order $n$ with maximum degree $n-1$ (likewise, with minimum degree $1$) are stars. The D2C graphs with maximum degree $n-2$ turn out to be interesting, as they form a precise family of graphs; they all have $2n-4$ edges and a dominating edge. We will see that the graph $H_5$ from Figure~\ref{fig-h5} is the smallest member of this family (indeed it has six vertices and a vertex of degree~$4$).

We first describe a family $\mathcal T$ of twin-free D2C graphs with order $n\geq 6$ and maximum degree~$n-2$. (Recall that twins are non-adjacent vertices with the same neighbours.)
When $n=2k+2$ is an even integer ($k\geq 2$), we let $V(T_n)=A\cup B\cup\{u,v\}$, where $A=\{a_1,\ldots,a_k\}$ and $B=\{b_1\ldots,b_k\}$. The edges of $T_n$ are defined as follows: for every $i$ with $i\leq 1\leq k$, we have the edge $a_ib_i$, the edges $ua_i$, $ub_i$ and $vb_i$. For odd $n$ ($n=2k+3$ and $k\geq 2$), $T_n$ is obtained from $T_{n-1}$ by adding a vertex $w$ adjacent to $u$ and $v$. See Figure~\ref{fig:T-families} for an illustration.

\begin{figure}[!ht]
	\centering
	\begin{tikzpicture}
		\node (Tn) at (-4,0) {
		  \begin{tikzpicture}[scale=0.75]
		    \node[jolinoeud](u) at (6,0.5) {};
                    \draw (u) node[right=0.1cm] {$u$};
		    \node[jolinoeud](v) at (0,-2) {};
                    \draw (v) node[below=0.1cm] {$v$};
		    \node[jolinoeud](a1) at (-2,1) {};
                    \draw (a1) node[left=0.7mm] {$a_1$};
		    \node[jolinoeud](a2) at (-1,1) {};
                    \draw (a2) node[left=0.7mm] {$a_2$};
		    \node[jolinoeud](a3) at (2,1) {};
                    \draw (a3) node[left=0.7mm] {$a_{k-1}$};
		    \node[jolinoeud](a4) at (3,1) {};
                    \draw (a4) node[left=0.7mm] {$a_k$};
		    \node[jolinoeud](b1) at (-2,0) {};
                    \draw (b1) node[left=0.7mm] {$b_1$};
		    \node[jolinoeud](b2) at (-1,0) {};
                    \draw (b2) node[left=0.7mm] {$b_2$};
		    \node[jolinoeud](b3) at (2,0) {};
                    \draw (b3) node[left=0.7mm] {$b_{k-1}$};
		    \node[jolinoeud](b4) at (3,0) {};
                    \draw (b4) node[left=0.7mm] {$b_k$};
		    \node at (0,0) {$\cdots$};
		    \node at (0,1) {$\cdots$};
		    \draw (a1)--(b1);
		    \draw (a2)--(b2);
		    \draw (a3)--(b3);
		    \draw (a4)--(b4);
		    \draw (b3)--(v)--(b4);
		    \draw (b1)--(v)--(b2);
                    \draw[bend right=50] (b1)to(u);
                    \draw[bend right=45] (b2)to(u);
                    \draw[bend right=40] (b3)to(u);
                    \draw[bend right=0,line width=0.75mm] (b4)to(u);
                    \draw[bend right=50] (u)to(a1);
                    \draw[bend right=45] (u)to(a2);
                    \draw[bend right=40] (u)to(a3);
                    \draw[bend right=0] (u)to(a4);

                    \draw[rounded corners] (-2.75, -0.3) rectangle (3.5, 0.3) {};
                    \draw[rounded corners] (-2.75, 0.7) rectangle (3.5, 1.3) {};
		    \node at (-3.25,1) {$A$};
		    \node at (-3.25,0) {$B$};
 		    \node at (1.75,-3) {(a) $T_{2k+2}$};                       
			\end{tikzpicture}
		};
		\node (Tnprime) at (4,0) {
		  \begin{tikzpicture}[scale=0.75]
                    \node[jolinoeud](u) at (6,0.5) {};
                    \draw (u) node[right=0.1cm] {$u$};
		    \node[jolinoeud](v) at (0,-2) {};
                    \draw (v) node[below=0.1cm] {$v$};
		    \node[jolinoeud](w) at (4.5,-2) {};
                    \draw (w) node[below=0.1cm] {$w$};
		    \node[jolinoeud](a1) at (-2,1) {};
                    \draw (a1) node[left=0.7mm] {$a_1$};
		    \node[jolinoeud](a2) at (-1,1) {};
                    \draw (a2) node[left=0.7mm] {$a_2$};
		    \node[jolinoeud](a3) at (2,1) {};
                    \draw (a3) node[left=0.7mm] {$a_{k-1}$};
		    \node[jolinoeud](a4) at (3,1) {};
                    \draw (a4) node[left=0.7mm] {$a_k$};
		    \node[jolinoeud](b1) at (-2,0) {};
                    \draw (b1) node[left=0.7mm] {$b_1$};
		    \node[jolinoeud](b2) at (-1,0) {};
                    \draw (b2) node[left=0.7mm] {$b_2$};
		    \node[jolinoeud](b3) at (2,0) {};
                    \draw (b3) node[left=0.7mm] {$b_{k-1}$};
		    \node[jolinoeud](b4) at (3,0) {};
                    \draw (b4) node[left=0.7mm] {$b_k$};
		    \node at (0,0) {$\cdots$};
		    \node at (0,1) {$\cdots$};
		    \draw (a1)--(b1);
		    \draw (a2)--(b2);
		    \draw (a3)--(b3);
		    \draw (a4)--(b4);
		    \draw (b3)--(v)--(b4);
		    \draw (b1)--(v)--(b2);
		    \draw (v)--(w);
                    \draw[,line width=0.75mm] (w)--(u);
                    \draw[bend right=50] (b1)to(u);
                    \draw[bend right=45] (b2)to(u);
                    \draw[bend right=40] (b3)to(u);
                    \draw[bend right=0,line width=0.75mm] (b4)to(u);
                    \draw[bend right=50] (u)to(a1);
                    \draw[bend right=45] (u)to(a2);
                    \draw[bend right=40] (u)to(a3);
                    \draw[bend right=0] (u)to(a4);

                    \draw[rounded corners] (-2.75, -0.3) rectangle (3.5, 0.3) {};
                    \draw[rounded corners] (-2.75, 0.7) rectangle (3.5, 1.3) {};
		    \node at (-3.25,1) {$A$};
		    \node at (-3.25,0) {$B$};
 		    \node at (1.75,-3) {(b) $T_{2k+3}$};     
			\end{tikzpicture}
		};
	\end{tikzpicture}
	\caption{The two D2C graphs $T_{2k+2}$ and $T_{2k+3}$; $A$ and $B$ are independent sets, and the bold edges are dominating.}
	\label{fig:T-families}
\end{figure}
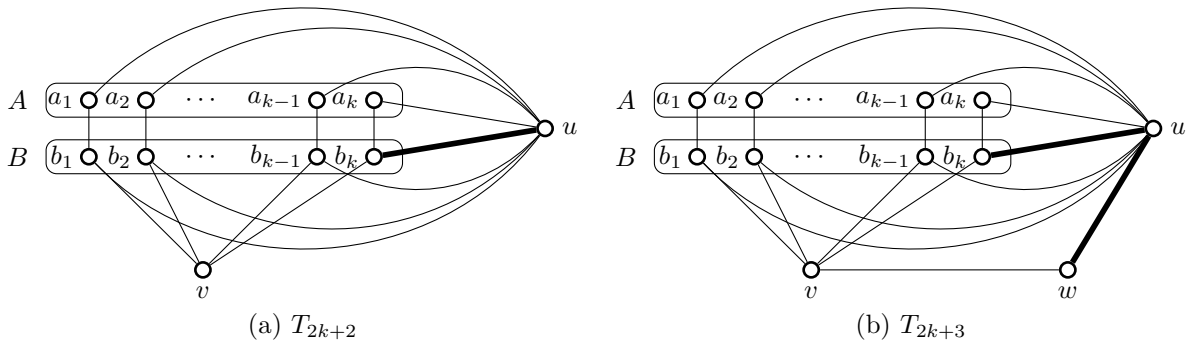

We will extend the family $\mathcal T$ to graphs that have twins. But first, we will need the following theorem of MacDougall and Eggleton~\cite{MacDougall-Eggleton}.

\begin{theorem}[MacDougall and Eggleton {\cite[Theorem 2]{MacDougall-Eggleton}}]\label{thm:twins}
Let $G$ be a D2C graph with a vertex $v$ of $G$. The graph $G'$ obtained from $G$ by adding a twin of $v$ is D2C if and only if for every vertex $w\neq v$ such that $v$ and $w$ are in a common triangle, there exists a non-neighbour $x$ of $v$ in $G$ such that $N(x)\cap N(v)=\{w\}$.
\end{theorem}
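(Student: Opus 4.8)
The plan is to split the statement into a distance claim and a criticality claim. First I would observe that adding a twin $v'$ of $v$ never changes the diameter: since $N(v')=N(v)$ and $G$ has diameter~$2$, every old vertex lies within distance~$2$ of $v'$ (a common neighbour of $v$ and a vertex $z\notin N(v)$ is also adjacent to $v'$), while $d(v,v')=2$ through any vertex of $N(v)$. Hence $G'$ always has diameter~$2$, and the entire content of the theorem is that \emph{every edge of $G'$ is critical}. I would also record that interchanging $v$ and $v'$ is an automorphism of $G'$, so an edge $vw$ is critical in $G'$ if and only if $v'w$ is; this lets me ignore the new edges $v'w$ and concentrate on the edges of $G$.

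Next I would show that every edge $ab$ of $G$ with $a,b\notin\{v,v'\}$ stays critical in $G'$. Such an edge is critical in $G$ for some pair $\{x,y\}$, and the only way adding $v'$ could shorten the distance between $x$ and $y$ would be to create a path $x-v'-y$, which requires $x,y\in N(v)$; but then $x-v-y$ would already be a length-$2$ path in $G$, contradicting criticality. So the old critical pair survives, and $ab$ remains critical.

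The heart of the proof is the analysis of an edge $vw$ with $w\in N(v)$. Using Observation~\ref{obs:critical}, the pairs for which $vw$ can be critical fall into three types: (A) the pair $\{v,w\}$ when $v$ and $w$ have no common neighbour; (B) a pair $\{v,z\}$ with $z\notin N[v]$ for which $w$ is the \emph{unique} common neighbour of $v$ and $z$; and (C) a pair $\{w,z\}$ with $z\in N(v)$ for which $v$ is the unique common neighbour of $w$ and $z$. I would then check how each type behaves in $G'$. Since $N_{G'}(v)=N(v)$ and $v\not\sim v'$, types (A) and (B) are unaffected and persist; but in type (C) the vertex $v'$ is a second common neighbour of $w$ and $z$ (both lie in $N(v)=N(v')$), so (C) is always destroyed. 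Finally I would verify that no genuinely new critical pair can be created by $v'$: the only candidate is the pair $\{v,v'\}$, whose common neighbours are all of $N(v)$, so $vw$ is critical for it only when $\deg(v)=1$, a situation incompatible with $w$ lying in a triangle. Consequently $vw$ is critical in $G'$ exactly when it admits a type-(A) or type-(B) pair.

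It remains to translate this into the stated condition. The edge $vw$ fails to have a type-(A) pair precisely when $v$ and $w$ have a common neighbour, that is, when $v$ and $w$ lie in a common triangle; and a type-(B) pair for $vw$ is exactly a non-neighbour $x$ of $v$ with $N(x)\cap N(v)=\{w\}$. Thus $G'$ is D2C if and only if every edge $vw$ lying in a triangle admits such an $x$, which is the claimed equivalence. I expect the main obstacle to be the bookkeeping in the third step: cleanly ruling out that new pairs through $v'$ (the pair $\{v,v'\}$) could revive criticality, and confirming that type~(C) is destroyed in \emph{every} instance rather than just generically.
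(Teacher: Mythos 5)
There is no internal proof to compare your argument against: the paper imports this statement verbatim as Theorem~2 of MacDougall and Eggleton~\cite{MacDougall-Eggleton} and uses it as a black box in Section~\ref{sec:highdegree}. Assessed on its own terms, your proof is correct and complete. The skeleton is sound: $G'$ always has diameter~$2$ (any path from $v'$ can be rerouted through $v$, and $d_{G'}(v,v')=2$); the twin-swap automorphism disposes of the edges at $v'$; an edge $ab$ avoiding both $v$ and $v'$ keeps its critical pair, since a new length-two path between $x$ and $y$ through $v'$ forces $x,y\in N(v)$ and hence yields an old path $x$--$v$--$y$ avoiding $ab$; and the trichotomy (A)/(B)/(C) of candidate pairs for an edge $vw$, which is exactly what Observation~\ref{obs:critical} gives, behaves as you claim: (A) and (B) are untouched because $v$ gains no new neighbour, while (C) is destroyed in every instance because $w,z\in N(v)=N(v')$ makes $v'$ a second common neighbour. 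The one genuinely delicate point, the potential new critical pair $\{v,v'\}$, is also handled correctly, though your phrasing is compressed: $vw$ is critical for $\{v,v'\}$ only when $N(v)=\{w\}$, and in that case $v$ and $w$ have no common neighbour, so a type-(A) pair exists anyway and the biconditional \emph{$vw$ is critical in $G'$ if and only if it admits a type-(A) or type-(B) pair} is unharmed; equivalently, as you note, $\deg(v)=1$ is incompatible with $v$ and $w$ lying in a common triangle, so this degenerate case never interferes with the final translation into the stated condition.
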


We observe the following.

\begin{observation}\label{obs:T-twins}
In the graph $T_n$, the only vertices satisfying the conditions of Theorem~\ref{thm:twins} are the ones of $A\cup\{v,w\}$.
\end{observation}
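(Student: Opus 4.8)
The plan is to verify Observation~\ref{obs:T-twins} by checking, for each vertex of $T_n$, whether it satisfies the condition of Theorem~\ref{thm:twins}: namely, that for every vertex $w$ sharing a triangle with it, there is a non-neighbour $x$ whose unique common neighbour with it is $w$. First I would catalogue the triangles of $T_n$. By the definition of the edges (the matching $a_ib_i$, the edges $ua_i$, $ub_i$, $vb_i$, and in the odd case $uw$, $vw$), the only triangles are of the form $\{u,a_i,b_i\}$ (since $u$ sees both endpoints of the matching edge $a_ib_i$). Crucially, $v$ sees only the $b_i$'s (and $w$), and the $a_i$'s are pairwise non-adjacent, as are the $b_i$'s, so no other triangle can form. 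In particular $v$ and $w$ lie in no triangle at all, so they \emph{vacuously} satisfy the condition.

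Next I would check the vertices of $A$ and $B$. A vertex $a_i$ lies in the single triangle $\{u,a_i,b_i\}$, so the two candidate ``$w$'' vertices are $u$ and $b_i$. For $w=u$: I need a non-neighbour $x$ of $a_i$ with $N(x)\cap N(a_i)=\{u\}$; since $N(a_i)=\{u,b_i\}$, taking $x=a_j$ for $j\neq i$ works, as $a_j$ sees $u$ but not $b_i$. For $w=b_i$: I need a non-neighbour $x$ of $a_i$ with $N(x)\cap N(a_i)=\{b_i\}$, but every neighbour of $b_i$ other than $a_i$ (namely $u$ and $v$) — here the common neighbour with $a_i$ would have to be $b_i$ itself, so $x$ must see $b_i$ but not $u$; no such vertex exists because every vertex adjacent to $b_i$ is also adjacent to $u$. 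Hence the condition \emph{fails} for $a_i$ with the choice $w=b_i$, so $a_i$ does \emph{not} satisfy the hypotheses of Theorem~\ref{thm:twins}. This is the point requiring the most care: I must make sure I am reading the Observation correctly. Re-reading it, the Observation asserts that the vertices of $A\cup\{v,w\}$ \emph{are} the ones satisfying the conditions — so I expect $a_i$ to succeed, which means the relevant ``$w$'' in the triangle condition ranges appropriately and I should re-examine which common-neighbour witnesses are available.

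The resolution I anticipate is that for $a_i$ with triangle-partner $b_i$, the witness is $x=v$: indeed $v$ is a non-neighbour of $a_i$, and $N(v)\cap N(a_i)=\{b_i\}$ precisely because $v$ sees $b_i$ but not $u$ (as $uv$ is a non-edge in $T_n$ — note $u$ and $v$ are \emph{not} adjacent here, their roles being symmetric hubs over $B$). So in fact every vertex of $A$ succeeds, using $a_j$ as witness for the partner $u$ and using $v$ as witness for the partner $b_i$. By contrast, a vertex $b_i$ lies in the triangle $\{u,a_i,b_i\}$ with partners $u$ and $a_i$; for partner $a_i$ one needs a non-neighbour $x$ of $b_i$ with $N(x)\cap N(b_i)=\{a_i\}$, but $N(a_i)=\{u,b_i\}$ forces such an $x$ to see $a_i$ while avoiding $u$ and $v$, and no vertex of $T_n$ does this, so $b_i$ \emph{fails}. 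The vertex $u$ lies in all triangles $\{u,a_i,b_i\}$ and for partner $a_i$ one needs a non-neighbour of $u$ whose unique common neighbour with $u$ is $a_i$; since $v$ is a non-neighbour of $u$ but $N(v)\cap N(u)=B$ has size $k\geq 2$, and the $a_i,b_i$ are the only other candidates yet every non-neighbour of $u$ among $A\cup B\cup\{v,w\}$ fails to isolate $a_i$, the vertex $u$ fails as well.

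Assembling these checks completes the proof: $v$ and $w$ pass vacuously, every $a_i\in A$ passes via the two explicit witnesses $a_j$ and $v$, while every $b_i\in B$ and the hub $u$ fail. Thus the set of vertices satisfying the conditions of Theorem~\ref{thm:twins} is exactly $A\cup\{v,w\}$, as claimed. I expect the only genuine obstacle to be bookkeeping: correctly enumerating the (few) triangles and, for each required partner $w$, exhibiting or ruling out the isolating non-neighbour $x$, while keeping straight that $u$ and $v$ are non-adjacent and that $v$ (respectively $w$) has the restricted neighbourhood $B$ (respectively $\{u,v\}$). A clean way to present this is as a short case analysis over the vertex classes $\{v,w\}$, $A$, $B$, and $\{u\}$, tabulating for each the triangle-partners and the corresponding witness or obstruction.
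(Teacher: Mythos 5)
Your proposal is correct: the paper states this as an unproven observation, and your direct case check (only triangles are $\{u,a_i,b_i\}$; $v$ and $w$ pass vacuously; each $a_i$ passes with witnesses $a_j$ for partner $u$ and $v$ for partner $b_i$; each $b_i$ fails for partner $a_i$ since the only vertices seeing $a_i$ are $u$ and $b_i$; $u$ fails since its unique non-neighbour $v$ has $N(v)\cap N(u)\supseteq B$) is exactly the verification the paper leaves implicit. Note that your first-pass claim that no witness exists for the pair $(a_i,b_i)$ was wrong precisely because $v$ sees $b_i$ but not $u$, but you caught and corrected this yourself, so the final argument stands.
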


Nevertheless, adding a twin to $v$ would result in a graph of order $n$ with maximum degree at most $n-3$. Thus, we define the family of graphs $\mathcal T'$ extending $\mathcal T$ in the following way. $\mathcal T'$ contains all graphs obtained from a graph in $\mathcal T$ by replacing an arbitrary (possibly empty) subset of vertices of $A\cup\{w\}$ by any number of twins. Note that for any graph $G$ in $\mathcal T'$, the edge $ub_i$ is dominating for every $i$ with $1\leq i\leq k$. If $w$ exists in $G$, then also the edge $uw$ is dominating.

Next, we show that the graphs in $\mathcal T'$ are D2C. Note that $T_6$ is isomorphic to the graph $H_5$ of Figure~\ref{fig-h5}. The graph obtained from $T_6$ by adding a twin to any vertex of $A$ is isomorphic to the graph of Figure~\ref{fig:thirteen-graphs}(a).\footnote{Similarly, the graph obtained from $T_6$ by adding a twin to $v$ is isomorphic to the graph of Figure~\ref{fig:thirteen-graphs}(b), but it has maximum degree $n-3$ and thus does not belong to $\mathcal T'$.} The graph $T_{7}$ is isomorphic to the graph of Figure~\ref{fig:thirteen-graphs}(c).

\begin{proposition}\label{prop:T-Tprime-D2C}
Any graph of order $n$ in $\mathcal T'$ is D2C and has $2n-4$ edges.
\end{proposition}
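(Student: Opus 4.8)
The plan is to treat the base graphs $T_n\in\mathcal T$ directly and then lift the result to all of $\mathcal T'$ via Theorem~\ref{thm:twins}. The edge count is immediate: $T_{2k+2}$ consists of the $k$ edges $a_ib_i$, the $2k$ edges $ua_i,ub_i$, and the $k$ edges $vb_i$, for a total of $4k=2n-4$; and $T_{2k+3}$ has the two extra edges $uw,vw$, giving $4k+2=2n-4$. Every vertex of $A\cup\{w\}$ has degree exactly $2$, so replacing such a vertex by a twin (or adding a further twin) introduces exactly one new vertex and two new edges, preserving the identity $m=2n-4$. A trivial induction on the number of twins added then settles the edge count for every graph in $\mathcal T'$.

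For the base graphs I would verify D2C by hand. Diameter $2$ follows by listing the non-adjacent pairs and exhibiting a common neighbour: $u$ serves $\{a_i,a_j\}$, $\{a_i,b_j\}$ ($i\neq j$) and $\{w,a_i\}$; both $u$ and $v$ serve $\{b_i,b_j\}$ and $\{w,b_i\}$; and $b_i$ serves $\{u,v\}$ and $\{a_i,v\}$. For criticality, Observation~\ref{obs:critical} lets me restrict to pairs containing an endpoint of the edge, and it suffices to find, for each edge, one pair whose unique path of length at most $2$ uses that edge. Concretely: $ua_i$ is critical for $\{a_i,a_j\}$ (whose only common neighbour is $u$); $ub_i$ is critical for $\{b_i,a_j\}$ with $j\neq i$; the pair $\{a_i,v\}$ has $b_i$ as its unique common neighbour, so both $a_ib_i$ and $vb_i$ are critical; in the odd case $uw$ is critical for $\{w,a_i\}$, while $vw$ is critical for the pair $\{v,w\}$ itself, since $v$ and $w$ have no common neighbour. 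As $k\geq 2$ these witnesses always exist, so every edge is critical and $T_n$ is D2C.

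To pass from $\mathcal T$ to $\mathcal T'$ I would apply Theorem~\ref{thm:twins} repeatedly. Observation~\ref{obs:T-twins} already guarantees that every vertex of $A\cup\{w\}$ satisfies the hypothesis of that theorem in $T_n$, so a single twin may be added. The delicate point — and the step I expect to be the main obstacle — is that Theorem~\ref{thm:twins} adds only one twin, so to reach all of $\mathcal T'$ one must confirm that the hypothesis survives each subsequent twin addition. I would check this by observing that a new twin $a_i'$ of $a_i$ has $N(a_i')=\{u,b_i\}$ and lies only in the triangle $a_i'ub_i$, and that the witnesses required for its triangle-mates ($a_j$ with $j\neq i$ for the mate $u$, and $v$ for the mate $b_i$) are original vertices untouched by twinning; the same witnesses continue to serve $a_i$ and every $a_j$. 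Since $w$ lies in no triangle (as $uv\notin E$), twins of $w$ may be added unconditionally. Hence repeated application of Theorem~\ref{thm:twins} keeps the graph D2C, and combined with the edge count this shows every graph in $\mathcal T'$ is D2C with $2n-4$ edges.
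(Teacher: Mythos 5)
Your proof is correct and follows essentially the same route as the paper: direct edge count (one vertex and two edges per twin addition), explicit criticality witnesses for the twin-free graphs $T_n$, and Theorem~\ref{thm:twins} together with Observation~\ref{obs:T-twins} for the graphs with twins. If anything, you are more careful than the paper on the one delicate point---the paper settles the twin case with a single appeal to Observation~\ref{obs:T-twins} and Theorem~\ref{thm:twins}, whereas you explicitly verify that the hypothesis of Theorem~\ref{thm:twins} persists after each successive twin addition (the witnesses $a_j$ and $v$ being untouched by the twinning, and $w$ lying in no triangle).
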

\begin{proof}
Rceall that $G$ is obtained from $T_{2k+2}$ or $T_{2k+3}$ ($k\geq 2$) by expanding any (possibly empty) subset of vertices of $A\cup\{w\}$ into twins.
  
It is clear that $T_{2k+2}$ has $4k=2n-4$ edges. When we add $w$ to obtain $T'_{2k+3}$, we add one vertex and two edges. Similarly, when adding a twin of a vertex of $A\cup\{w\}$ to $T_{2k+2}$ or $T_{2k+3}$, we always add one vertex and two edges, thus the resulting number of edges is always $2n-4$.

It remains to show that $G$ is D2C. It is clear that $G$ has diameter~$2$. Assume first that $G$ has no twins. Let $1\leq i,j\leq k$ such that $i\neq j$. The edges $ua_i$ and $ub_i$ are critical for all the pairs $\{a_i,b_j\}$ and $\{b_i,a_j\}$, respectively.
The edge $a_ib_i$ is critical for $\{a_i,v\}$.
Each edge $vb_i$ is critical for the pair $\{v,b_i\}$. The edges $wu$ and $wv$ are critical for the pairs $\{w,a_i\}$ and $\{w,v\}$, respectively.

When $G$ has twins, we apply Observation~\ref{obs:T-twins} and Theorem~\ref{thm:twins}.
\end{proof}

We now show that the graphs of $\mathcal T'$ are the only non-bipartite D2C graphs with maximum degree $n-2$.

\begin{theorem}
Any non-bipartite D2C graph on $n$ vertices and maximum degree~$n-2$ belongs to $\mathcal T'$.
\end{theorem}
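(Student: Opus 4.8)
The plan is to fix a vertex $u$ of degree $n-2$ and let $v$ be its unique non-neighbour. Writing $W=N(u)\cap N(v)$ and $R=N(u)\setminus W$, I would first record the easy global facts. Since $v$ is the only non-neighbour of $u$, every neighbour of $v$ lies in $N(u)$, so $N(v)=W$; also $V(G)=\{u,v\}\cup W\cup R$ and $|W|+|R|=n-2$. Because $\mathrm{diam}(G)=2$ and $uv\notin E$, the set $W$ is nonempty, and each vertex of $R$ has at least one neighbour in $W$ (its only way to reach $v$ in two steps). Moreover, $N[u]\cup N[b]=V(G)$ for every $b\in W$, so every edge $ub$ is dominating; this already explains why these graphs have a dominating edge.

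The heart of the argument is a sequence of three structural claims, all proved by the same mechanism: for any two neighbours $a,a'$ of $u$, the vertex $u$ is a common neighbour of $a$ and $a'$, so an edge inside $N(u)$ can almost never be critical for a pair involving two such neighbours. Concretely, I would show, in order: (i) $W$ is independent -- an edge $bb'$ inside $W$ could only be critical for a pair $\{b,z\}$ or $\{b',z\}$, but in each case $u$ (or $v$) provides a second path of length at most $2$, so no such edge can be critical; (ii) each $x\in R$ has exactly one neighbour in $W$ -- analysing the edge $xb$ for a $W$-neighbour $b$ of $x$, every candidate critical pair is ruled out by the common neighbour $u$ except the pair $\{x,v\}$, which can be critical only if $N(x)\cap W=\{b\}$, so $x$ cannot have two $W$-neighbours; (iii) $R$ is independent -- an edge $xx'$ inside $R$ is again killed by the common neighbour $u$, now combined with the uniqueness from (ii). The Observation listing the only pairs for which an edge can be critical keeps each of these case analyses finite and mechanical.

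Next I would pin down the global counts. Since $G$ is non-bipartite we must have $R\neq\emptyset$ (otherwise $N(u)=N(v)=W$, making $u$ and $v$ twins and $G=K_{2,|W|}$ bipartite). I then claim that at least two vertices of $W$ have a neighbour in $R$: if exactly one vertex $b\in W$ had $R$-neighbours, then one checks directly that the edge $ub$ (in the case where $W$ also contains vertices with no neighbour in $R$) or the edges $ux$ for $x\in R$ (in the case $W=\{b\}$) fail to be critical, contradicting the D2C property. This is the step I expect to be the most delicate, since it splits into the two subcases $W=\{b\}$ versus extra $R$-free vertices in $W$, and it invokes criticality of two different families of edges.

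Finally, the structure obtained is exactly that of $\mathcal T'$. Set $B=\{b\in W : b \text{ has a neighbour in } R\}$, so $|B|=k\ge 2$; by (ii) the $R$-neighbourhoods of the vertices of $B$ partition $R$ into twin-classes (one per $b_i$, each of size at least $1$), while the vertices of $W\setminus B$ are pairwise twins adjacent only to $u$ and $v$. Choosing one representative from each twin-class recovers a base graph $T_{2k+2}$ or $T_{2k+3}$ (the latter precisely when $W\setminus B\neq\emptyset$), and $G$ is obtained from it by expanding a subset of $A\cup\{w\}$ into twins; hence $G\in\mathcal T'$.
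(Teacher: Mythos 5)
Your proposal is correct, and its structural core coincides with the paper's: the paper proves exactly your claims (i)--(iii) for $N(v)$ and $N(u)\setminus N(v)$ (independence of $N(v)$, the unique-$W$-neighbour property, independence of $N(u)\setminus N(v)$), using the same mechanism that the common neighbour $u$ (or $v$) kills every candidate critical pair. Where you genuinely diverge is in the handling of twins and in the endgame. The paper first invokes the MacDougall--Eggleton twin theorem (Theorem~\ref{thm:twins}) together with Observation~\ref{obs:T-twins} to reduce to the twin-free case, and then needs one further claim (distinct vertices of $R$ have distinct $W$-neighbours, else they would be twins) plus the remark that at most one vertex of $W$ can miss $R$, to conclude that the twin-free graph is $T_n$. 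You instead keep the twins throughout and read the structure off directly: the fibres of the unique-$W$-neighbour map are precisely the twin classes expanding $A$, and $W\setminus B$ is the twin class expanding $w$. This buys self-containedness -- you never need the twin-addition theorem, whose use in the paper implicitly requires the reverse reading that deleting a twin from a D2C graph leaves a D2C graph -- at the cost of doing the $\mathcal T'$ bookkeeping by hand. Your step showing $|B|\ge 2$ is also a genuine addition rather than a restatement: the paper ends with ``it is now clear that $G$ is isomorphic to $T_n$'' without ruling out $k\le 1$, yet $T_n$ is only defined for $k\ge 2$, and the degenerate configurations with a single $b$ (on $4$ or $5$ vertices, or with larger twin classes) are excluded exactly because the edges you identify ($ub$ when $W\setminus\{b\}\neq\emptyset$, resp.\ $ux$ when $W=\{b\}$) fail to be critical. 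So the step you flagged as most delicate is both correct and fills a detail the paper glosses over; the only inessential blemish is that your claim (iii) does not actually need the uniqueness from (ii), since $v\notin N[x]\cup N[x']$ already rules out $v$ from every candidate pair.
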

\begin{proof}
Let $G$ be a non-bipartite D2C graph on $n$ vertices and maximum degree~$n-2$. By Observation~\ref{obs:T-twins} and Theorem~\ref{thm:twins}, we may assume that $G$ is twin-free; thus we need to prove that $G$ belongs to $\mathcal T$.

Let $u$ be a vertex of degree $n-2$ in $G$, and let $v$ be its unique non-neighbour. First of all, we claim that $N(v)$ forms an independent set. Indeed, if $x,y$ are two adjacent vertices of $N(v)$, the edge $xy$ cannot be critical, a contradiction.

Next, we claim that every vertex $x$ of $N(u)\setminus N(v)$ has exactly one neighbour in $N(v)$. First, if $x$ has no neighbour in $N(v)$, then $x$ and $v$ would be at distance~$3$, a contradiction. Second, if $x$ had two distinct neighbours $y$ and $z$ in $N(v)$, then the edges $xy$ and $xz$ could not be critical, a contradiction. Denote by $f(x)$ the unique neighbour of $x$ in $N(v)$.

We now show that $N(u)\setminus N(v)$ is an independent set. Indeed, if we had two adjacent vertices $x_1$ and $x_2$ in $N(u)\setminus N(v)$, since both $x_1$ and $x_2$ have a neighbour in $N(v)$, the edge $x_1x_2$ cannot be critical, a contradiction.

We now show that if $x_1,x_2$ are two distinct vertices of $N(u)\setminus N(v)$, then $f(x_1)\neq f(x_2)$. Indeed, if we had $f(x_1)=f(x_2)$, then $x_1$ and $x_2$ would be twins, a contradiction.

Thus, the subgraph induced by $N(u)$ is a collection of disjoint edges. Hence, there can be at most one vertex in $N(v)$ that has no neighbour in $N(u)$ (if there are two they would be twins).

Thus, we let $A=N(u)\setminus N(v)$, $B=\{y\in N(v)~|~y=f(x)\text{ and $x\in A$}\}$, and if $(B\setminus N(v))\neq\emptyset$, we let $w$ be the only vertex of $B\setminus N(v)$. It is now clear that $G$ is isomorphic to $T_n$.
\end{proof}

\section{Conclusion}\label{sec:conclu}

Conjecture~\ref{conj} postulates that there is a linear gap in the set of possible numbers of edges of D2C graphs of order $n$ when we exclude the well-understood class  of complete bipartite graphs: from $\lfloor n^2/4\rfloor$ to $\lfloor (n-1)^2/4 \rfloor+1$. The bound of Theorem~\ref{thm:main} only shows a constant gap for graphs with a dominating edge, which leaves room for further improvements. We hope that our method can be further used to improve the gap by a function of $n$ (ideally linear), towards Conjecture~\ref{conj}. As witnessed by Theorems~\ref{thm:strong} and~\ref{thm:strong2}, in order to do so, one should first focus on the case when $P_{uv}$ is nonempty. When $P_{uv}$ is empty, the first case to improve is the one considered in Theorem~\ref{thm:strong2}, especially when there is a unique connected component in both $S_u$ and $S_v$, and the $f$-orientation is acyclic.

Recall that the infinite family $\F$ of extremal graphs for Conjecture~\ref{conj} contains only graphs with no dominating edge. Moreover, among graphs of order at most $11$, there are only ten non-bipartite D2C graphs with a dominating edge (all of order at most $9$) and $\lfloor(n-1)^2/4\rfloor+1$ edges. Thus we suspect that, for this class of D2C graphs, the bound of Conjecture~\ref{conj} is actually not tight, and ask the following.

\begin{question}
What is the largest possible number of edges of a non-bipartite D2C graph of order $n$ and with a dominating edge?
\end{question}

Towards this question, one possibility to obtain a D2C graph with a dominating edge is to start with the graph $T_7$ defined in Section~\ref{sec:highdegree} and depicted in Figure~\ref{fig:thirteen-graphs}(c), and expand the vertices $v$ and $w$ into two equal-size sets of twins. The resulting graph is D2C, has $\tfrac{(n-2)^2+15}{4}$ edges, and the edge $uw$ is dominating.




\end{document}